\documentclass[12pt]{article}

\usepackage[a4paper,
left=2.5cm,
right=2.5cm,
top=2.8cm,
bottom=2.8cm]{geometry}

\usepackage{setspace}
\usepackage{indentfirst}
\usepackage{amsmath, amssymb, amsfonts, amsthm}
\usepackage{mathrsfs}
\usepackage{graphicx}
\usepackage{caption}
\usepackage{float}
\usepackage{booktabs}
\usepackage{array}
\usepackage{tabularx}
\usepackage{multirow}
\usepackage{enumerate}
\usepackage{hyperref}
\usepackage[numbers,sort&compress]{natbib}

\newtheorem{lem}{Lemma}[section]
\newtheorem{thm}[lem]{Theorem}
\newtheorem{cor}[lem]{Corollary}

\newtheorem*{claim*}{Claim}

\usepackage{titlesec}
\usepackage{xcolor}
\newcommand{\setupappendix}{
	\titleformat{\section}[block]
	{\normalfont\Large\bfseries} 
	{Appendix \Alph{section}:}
	{0.5em} 
	{} 
}

\title{An excluded minor theorem for the 6-wheel}

\begin{document}
 	
 	\author{Zijun Chen, Yuqi Xu,\enspace Weihua Yang\footnote{Corresponding author. E-mail: ywh222@163.com,~yangweihua@tyut.edu.cn}\\
 		\small Department of Mathematics, Taiyuan University of Technology,\\
 		\small Taiyuan Shanxi-030024, China}
 	\date{}
 	
 	\maketitle
 	
 	\begin{abstract}
 		For each integer $n \geq 3$, the wheel graph $W_n$ is defined as the graph obtained by connecting a single vertex to all vertices of a cycle of length $n$. In particular, $W_6$ can be uniquely obtained from the Petersen graph by contracting three edges incident to a common vertex. Gubser provided a characterization of all $3$-connected planar $W_6$-minor-free graphs. In this paper, we complete the characterization of $W_6$-minor-free graphs by determining the $3$-connected nonplanar cases.
 	\end{abstract}
 	
 	\noindent\textbf{Keywords:} minor-free; $W_6$; $3$-connected graph; nonplanar
 	
\section{Introduction}
 	
 	All graphs considered in this paper are simple. Let $G$ and $H$ be graphs. A graph $H$ is a minor of $G$, denoted by $H\preceq G$, if it can be obtained from $G$ by deleting and contracting edges. A graph $G$ is said to be $H$-minor-free if no minor of $G$ is isomorphic to $H$. Many fundamental problems in graph theory can be expressed in terms of $H$-minor-free graphs. One of the most well-known conjectures related to the Petersen graph was proposed by Tutte~\cite{tutte1966algebraic}, who stated that every bridgeless graph without a Petersen minor admits a nowhere-zero 4-flow.
 	
 	Although no explicit structure theorems are available for Petersen-minor-free graphs, several results have been established for excluding smaller minors. For several $3$-connected graphs with twelve edges, three classical excluded-minor characterizations are known: Maharry~\cite{maharry2000characterization} characterized cube-minor-free graphs, Ding~\cite{ding2013characterization} characterized octahedron-minor-free graphs, and Maharry and Robertson~\cite{maharry2016structure}
 	characterized $V_8$-minor-free graphs. For each integer $n\geq 3$, let $W_n$ denote the wheel graph obtained by connecting a single vertex to all vertices of a cycle $C_n$.
 	Dirac~\cite{dirac1952property}
 	proved that $W_3$-minor-free graphs are series-parallel graphs. Tutte~\cite{tutte1961theory} showed that $W_3$ is the only $3$-connected $W_4$-minor-free graph.
 	Oxley~\cite{oxley1989regular} characterized $3$-connected $W_5$-minor-free graphs. Gubser~\cite{gubser1993planar} characterized all $3$-connected planar $W_6$-minor-free graphs. The graph $W_6$ also has twelve edges and can be obtained from the Petersen
 	graph by contracting three edges incident with a common vertex. In this paper, we determine all $3$-connected nonplanar $W_6$-minor-free graphs.
 	
 	Let $k\geq 0$ be an integer. A $k$-separation of a graph $G$ is a pair $(G_1,G_2)$ of subgraphs of $G$ such that $E(G_1)\cup E(G_2)=E(G)$, $V(G_1)\cup V(G_2)=V(G)$, $V(G_1)-V(G_2)\neq\emptyset$, $V(G_2)-V(G_1)\neq\emptyset$, and $|V(G_1)\cap V(G_2)|=k$. If $G$ is $3$-connected with at least five vertices, then $G$ is internally $4$-connected if, for every 3-separation $(G_1,G_2)$ of $G$, one of $G_1$ and $G_2$ is isomorphic to $K_{1,3}$.
 	
 	For each integer $n\geq 3$, let $DW^+_n$ denote the double-wheel obtained from a cycle $C_n$ by adding two adjacent vertices $u$ and $v$ and joining each of them to every vertex of the cycle. For each integer $n\geq 3$, an alternating double-wheel $AW^+_{2n}$ is a subgraph of $DW^+_{2n}$ in which $u$ and $v$ are alternately adjacent to every vertex in $C_{2n}$. Moreover, let $DW_n$ and $AW_{2n}$ be the graphs obtained from $DW^+_n$ and $AW^+_{2n}$, respectively, by deleting the edge $uv$. Let $V_8$ be the graph obtained from a cycle $C_8$ by adding an edge between each pair of diametrically opposite vertices. The line graph $L(G)$ of $G$ is the graph whose vertices are the edges of $G$, with two vertices adjacent in $L(G)$ if and only if the corresponding edges of $G$ share an end vertex.
 	For each integer $n\geq 5$, let $C^2_n$ be a graph obtained from a cycle $C_n$ by adding edges between all pairs of vertices of distance two in $C_n$.
 	$K_{m,3}$ is a complete bipartite graph with two disjoint vertex sets, $X_1$ and $X_2$, where $|X_1|=m$ and $|X_2|=3$. Let $K^{i,j}_{m,3}$, with $0\leq i\leq m(m-1)/2$ and $0\leq j\leq 3$, denote the graph obtained by adding $i$ edges between vertices of $X_1$ and $j$ edges between vertices of $X_2$. Let $G+e$ denote a graph obtained from a graph $G$ by adding an edge between two nonadjacent vertices. We write $G+e_1(e_2)$ to indicate that $G+e_1$ is isomorphic to $G+e_2$. Our main results are as follows.
 	
 	\begin{thm}\label{thm1.1}
 		The internally 4-connected $W_6$-minor-free graphs are precisely $\{K_5\}$ $\cup$  $\{K_{3,3}, DW^+_4, K_6\setminus e, K_6\}$ $\cup$ $\{\Gamma_1, K^{2,0}_{4,3}, C^2_7, K^{3,0}_{4,3}, K^{3',0}_{4,3}, C^2_7+e, K^{2',1}_{4,3}, K^{3',1}_{4,3}, K^{4,0}_{4,3}, \Gamma_2, K^{4,1}_{4,3}\}$ $\cup$ $\{C^2_6, DW_5, \mathrm{cube}, C^2_8, N, Q, AW_8\} \cup \{V_8+13+24, V_8+14+58, V_8+13+24+58, V_8+13+14+24+58\}$.

 	\end{thm}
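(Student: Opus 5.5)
Since planar graphs are already handled by Gubser's characterization, the work splits into a planar part and a nonplanar part. For the planar part I will extract from Gubser's list the graphs that are internally 4-connected; these should be the planar members of the statement, such as $C^2_6$, $DW_5$, the cube, $C^2_8$, $N$, $Q$ and $AW_8$. I would confirm this by checking each member of Gubser's list against the definition: every 3-separation must have a side isomorphic to $K_{1,3}$. For the nonplanar part, an internally 4-connected nonplanar graph other than $K_5$ contains a $K_{3,3}$ minor by Kuratowski/Wagner. The tool is then a splitter-type chain theorem for internally 4-connected graphs (Johnson--Thomas style). It says that, starting from $K_{3,3}$ (or from $V_8$, $K_5$, cube, or small exceptional graphs), every internally 4-connected graph can be built by a bounded list of operations, each of which keeps internal 4-connectivity and the minor. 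The operations are adding an edge, splitting a vertex, bridging two edges, and a few exceptional steps, for example through $C^2_n$ or biwheel families. Since $W_6$-minor-freeness is closed under taking minors, it is enough to grow the family outward from $K_{3,3}$, $K_5$ and $V_8$ and stop wherever a $W_6$ minor appears.

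Concretely, I would first fix the bases. $K_5$ is $W_6$-free because it has only 5 vertices. For $K_{3,3}$, I would check which edge additions and vertex splits that keep internal 4-connectivity avoid $W_6$. This produces $K_{3,3}$ plus edges, $DW^+_4$, $K_6\setminus e$ and $K_6$, and then the $K_{4,3}$-type family $K^{i,j}_{4,3}$, $\Gamma_1$, $\Gamma_2$, $C^2_7$ and $C^2_7+e$. For each candidate I will do two things:
\begin{itemize}
\item check that it has no $W_6$ minor, by a counting or degree argument, using that $W_6$ needs a vertex of degree at least 6 after contraction and 12 edges on 7 vertices;
\item list all one-step extensions permitted by the chain theorem, and exhibit a $W_6$ minor in each one not on the list.
\end{itemize}
Separately, $V_8$ branches are handled the same way: adding chords gives exactly the four listed $V_8+\dots$ graphs. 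Adding any further chord, or splitting any vertex, yields $W_6$.

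The key finiteness step is to show that every graph of size about 9–10 vertices arising in the chain contains $W_6$. Then no infinite family survives, and the chain theorem guarantees that larger graphs contain one of these. In particular, I must show that $K_{5,3}$-like extensions and $C^2_9$, and the splits of $K^{4,1}_{4,3}$, $\Gamma_2$ and $V_8+13+14+24+58$, all contain $W_6$. A quick route is to contract to 7 vertices and find a hub of degree 6 whose neighbourhood contains a Hamilton cycle.

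The main obstacle I expect is the exhaustive case analysis of one-step extensions up to isomorphism, especially vertex splits of the $K^{i,j}_{4,3}$ family. There are many nonisomorphic splits, and ruling out $W_6$ minors in the survivors requires care. I would organize this by the automorphism groups and by degree sequences. To certify $W_6$-freeness I would use two facts. First, a $W_6$ minor needs 7 branch sets and a hub set adjacent to 6 others forming a cycle. Second, the hub set therefore has at least 6 outgoing edges, and the rim sets form a 2-connected structure. This makes small vertex counts manageable.
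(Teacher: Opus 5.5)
Your route is genuinely different from the paper's and viable in principle. It needs one point made precise and one error corrected.

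\textbf{How the routes differ.} The paper uses no chain theorem for internally $4$-connected graphs; it splits on whether $G$ has a $V_8$ minor.

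In the $V_8$-free case, the Maharry--Robertson theorem (Lemma 2.2) leaves a few explicit classes. Each is closed by a finite check: Gubser's bounds plus Plantri for planar graphs, Ding's $K^{\triangledown}_{3,3}$ theorem plus an edge-addition table for order $7$, and explicit $W_6$ minors in $AW^+_6$, $K_{4,4}-3K_2$ and $L(K_{3,3})$.

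In the $V_8$-containing case, it runs Seymour's splitter theorem inside the $3$-connected class, where one added edge or one split is always a legal step. It enumerates all $3$-connected $W_6$-minor-free extensions of $V_8$ and filters for internal $4$-connectivity only at the end. Finiteness thus comes for free, at the cost of passing through the non-internally-$4$-connected families $V^i_8+\cdots$.

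Your approach avoids the $V_8$ structure theorem and those families. The price is that everything rests on a chain theorem whose operations and exceptions you leave open, and that matters here. Under the paper's definition, adding a single edge often leaves the class by putting a cubic vertex in a triangle. Every graph strictly between $K_{3,3}$ and $DW^+_4=K^{2,2}_{3,3}$ fails to be internally $4$-connected, and so does every $V_8+e$. So ``check which edge additions and splits keep internal $4$-connectivity'' reaches neither $DW^+_4$ nor $V_8+13+24$. You must state the theorem exactly, including its compound steps, and reconcile its notion of internal $4$-connectivity with the paper's.

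\textbf{The error in the $V_8$ branch.} You assert that this branch yields exactly the four listed chorded graphs, but $V_8$ itself survives.

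$V_8$ is internally $4$-connected. Since it is cubic and triangle-free, any $3$-separation with a single private vertex on one side has that side equal to $K_{1,3}$. With $8$ vertices, any other $3$-separation would have a private part $\{x,y\}$ of size two whose other neighbours all lie in the $3$-vertex separator. If $x\sim y$ this is impossible, since triangle-freeness gives four distinct further neighbours. If $x\not\sim y$ it forces $N(x)=N(y)$, and $V_8$ has no such pair.

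$V_8$ is also $W_6$-minor-free: any $7$-vertex minor of it has at most $11$ edges, and Table 1 lists it as $W_6$-minor-free.

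The omission already occurs in the paper's Lemma 3.6, on which its proof of this theorem rests, and hence in the statement itself. No correct execution of either argument yields the list exactly as printed; $V_8$ must be added.

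Relatedly, ``adding any further chord yields $W_6$'' is false as stated. $V_8+13+14+24+25+58$ is $W_6$-minor-free by Table 1. What you need is that each such addition either yields $W_6$ or leaves the internally $4$-connected class; here vertex $6$ is cubic in the triangle $\{2,5,6\}$.
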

 	
 	Some graphs in Theorem~\ref{thm1.1} are shown in Figures~\ref{fig7}, \ref{fig8} , and~\ref{fig9}. For integers $i,n,k,r\geq 1$, let $G^i$, $G^n$, $G^k$, and $G^r$ be the graphs obtained from $G$ by adding, respectively, $i$ new cubic vertices adjacent to $\{1,3,6\}$, $n$ new cubic vertices adjacent to $\{2,4,6\}$, $k$ new cubic vertices adjacent to $\{3,4,5\}$, and $r$ new cubic vertices adjacent to $\{3,5,y\}$.
 	
 	\begin{thm}\label{thm1.2}
 		The $3$-connected nonplanar $W_6$-minor-free graphs are precisely the $3$-connected nonplanar minors of the following graphs: $V_8+13+16+46$,
 		$V_8+13+14+24+27$, $V_8+13+14+25+27$, $V_8+13+14+36+46$, $V_8+13+14+36+47$, $V_8+13+14+38+46$, $V_8+13+14+58+68$, $V_8+14+16+27+47$, $V_8+13+14+24+25+58$, $V_8+13+14+27+28+47$, $V^i_8+13+16+36$, $K^{3,1}_{4,3}$, $K^{4,1}_{4,3}$, $H_{2a}+16+7z$, $H_{2a}+34+7z$, $H_{2a}+16+34+56$, $H_{2c}+12+34+3y$, $H_{2c}+12+3z+5y$, $\mathrm{cube}^1+3x$, $H_{1b}+12+34+3x$, $H_{2b1}+12+34$, $H^k_{2a1}+34+35+45$, $H_{1a1}$, $\mathrm{cube}^n+24+26+46$, $H^k_{2a2}+34+35+45$, $H^k_{2c1}+34+35+3y+45$, $H^k_{1b1}+34+35+45+5z$, or $H^{k,r}_{2e}+34+35+3y+45+5y$.
 	\end{thm}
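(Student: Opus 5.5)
The plan is to derive Theorem~\ref{thm1.2} from Theorem~\ref{thm1.1} by decomposing along 3-separations. First, the easy direction: for each listed graph I will check that it is $3$-connected, nonplanar and $W_6$-minor-free. Being $W_6$-minor-free is closed under minors, so every $3$-connected nonplanar minor of a listed graph also qualifies. To verify that a listed graph $G$ is $W_6$-minor-free, I will use the following fact. A $W_6$ minor in a $3$-connected graph can be taken with the hub a single vertex of degree $\ge 6$ or a contracted connected set. I will also use that the 3-separations of each listed graph are visible: the added cubic vertices in $G^i,G^n,G^k,G^r$ are attached to fixed triangles or triads. By the splitter-type argument below, any $W_6$ minor would then have to live in the internally $4$-connected ``core''. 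That core is one of the graphs of Theorem~\ref{thm1.1}, for example a $V_8$ extension or a $K^{i,j}_{4,3}$. Cubic vertices attached to a common triple can contribute at most one of the three branch sets together, since a $W_6$ rim vertex needs degree $3$ and the hub path would require crossing the separator.

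For the hard direction, let $G$ be $3$-connected, nonplanar and $W_6$-minor-free. If $G$ is internally $4$-connected, then Theorem~\ref{thm1.1} applies, and I will check that each nonplanar graph there is a minor of some listed graph. Otherwise $G$ has a nontrivial 3-separation $(G_1,G_2)$ with neither side $K_{1,3}$. I will take a $3$-connected internally-$4$-connected minor $H$ of $G$ obtained by replacing one side by a triangle or a claw on the separator. Nonplanarity is preserved by choosing the side to keep, using Kuratowski: some $K_{3,3}$ or $K_5$ subdivision meets mostly one side. Iterating this gives a tree decomposition of $G$ into internally $4$-connected nonplanar cores, planar $3$-connected pieces, and cubic vertices.

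The key step is to bound how the pieces attach. If a side $G_2$ has at least two vertices beyond the separator $\{a,b,c\}$, then $G_2$ plus the separator contains a path/fan structure rooted at $\{a,b,c\}$. Combined with a Kuratowski subgraph in $G_1$, which yields a vertex of high degree and a long cycle through $a,b,c$, this produces a $W_6$ minor. So essentially every nontrivial 3-separation leaves one side a single vertex or a small graph from a finite list. Hence $G$ is a core from Theorem~\ref{thm1.1}, namely $K_{3,3}$, $V_8$ plus chords, $K^{i,j}_{4,3}$, cube or $C^2_6$ type, with extra cubic vertices glued onto specific triples. The $H_{\cdot}$ graphs presumably encode the small-side variants. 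I would then do a case analysis over each core and each triple of its vertices, asking whether adding one, or arbitrarily many, cubic vertices on that triple creates $W_6$. The triples $\{1,3,6\},\{2,4,6\},\{3,4,5\},\{3,5,y\}$ will survive as the only ones allowing unboundedly many, which matches the superscripts. Finally I take the maximal configurations, which gives the list.

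The main obstacle is the lemma forcing small sides: showing that a ``fat'' side of a 3-separation together with a nonplanar other side always yields $W_6$, and enumerating the exceptions exactly. I would first try to route a 6-cycle through $a,b,c$ and three further branch vertices, using $3$-connectivity of each side plus a triangle on the separator. I would then handle the exceptions by computer-assisted or careful hand enumeration of the cores' extensions.
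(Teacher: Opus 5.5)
Your plan has a genuine gap, and it is in the step you call key: that step is false as stated, not merely unproved.

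\textbf{A counterexample to the key step.} Take $K^{1,3}_{5,3}$, where $X_2=\{a,b,c\}$ is a triangle and $uv$ is the one edge inside $X_1$. Separate along $X_2$: one side is $\{u,v\}\cup X_2$, so $G_2-X_2$ is even connected, and the other side is the remaining three vertices of $X_1$ together with $X_2$, which contains $K_{3,3}$. This is exactly your situation, yet the graph is $W_6$-minor-free. At most three branch sets meet $X_2$. Every other branch set is a single vertex of $X_1$ or $\{u,v\}$, so at least four branch sets span at most one edge of the minor. But any four vertices of $W_6$ span at least two edges. The theorem's own infinite families are of the same kind, for example $V^i_8+13+16+36$, whose added vertices are twins of vertex $2$ on $\{1,3,6\}$. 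So any correct version of your lemma must carry infinitely many exceptions, and pinning them down is the whole theorem.

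\textbf{Two further claims also fail.} First, the answer is not ``core plus cubic vertices''. $K_{3,3}$ with one vertex replaced by a triangle (the paper's $H_1$) is $3$-connected, nonplanar and $W_6$-minor-free, having twelve edges on eight vertices. Yet deleting any vertex leaves a vertex of degree $2$, so it is not a $3$-connected graph with cubic vertices attached. Gluings of this kind generate the $H$-families, and you give no way to decide which gluings are admissible. Second, a $W_6$ minor need not live in one piece. Take two copies of $K_{3,3}$ with parts $\{p,q,u\},\{x,y,z\}$ and $\{p',q',u'\},\{x',y',z'\}$, delete $u,u'$, and add $xx',yy',zz'$ (a $T_0$-sum). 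Then $\{p,x,x',p'\}$ is a hub for the rim $q\,y\,y'\,q'\,z'\,z$, although each copy is $W_6$-minor-free. So for your easy direction only the twin-cubic-vertex reduction is legitimate; everything else must be checked directly.

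\textbf{What is missing, and how the paper supplies it.} You need a mechanism that generates all candidates in finitely many controlled steps. The paper gets one by splitting on whether $G$ has a $V_8$ minor, not by decomposing along $3$-separations.
If $G$ has a $V_8$ minor, Seymour's splitter theorem (Lemma~\ref{lemma3.1}) builds $G$ from $V_8$ by single edge additions and vertex splits. Every intermediate graph is a $3$-connected minor of $G$, hence $W_6$-minor-free. The search gives the $49$ graphs of Table~\ref{table1}, and the only surviving splits add cubic twins on $\{1,3,6\}$ (Lemmas~\ref{lem3.3} and~\ref{lemma3.5}).
If $G$ has no $V_8$ minor, Maharry--Robertson (Lemma~\ref{lemma2.2}) gives the finite list of internally $4$-connected $\{V_8,W_6\}$-minor-free graphs (Lemma~\ref{lemma4.8}). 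Ferguson's theorem (Lemma~\ref{lemma2.3}) then says every $3$-connected $V_8$-minor-free graph is an iterated $T$-sum of these and $K_4$. This settles exactly what your decomposition leaves open: the only non-internally-$4$-connected building block is $K_4$, and gluing is a $T_i$-sum. The theorem then reduces to the $T$-sum case checks of Lemmas~\ref{lemma5.3}--\ref{lemma5.15}. That is where the triangle and edge gluings, and the specific triples carrying arbitrarily many twins, come from.

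Note also that the paper never uses the $V_8$-containing half of the internally $4$-connected classification. Taking Theorem~\ref{thm1.1} verbatim as your list of cores would be risky: as printed it omits $V_8$ itself, which is internally $4$-connected and, with twelve edges on eight vertices, $W_6$-minor-free.
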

 	
 	\begin{figure}[H]
 		\centering
 		\input{last.tpx}
 		\label{fig1}
 	\end{figure}
 	
 	\begin{cor}
 	Let $G$ be a $3$-connected graph. If no two cubic vertices in $G$ share the same neighborhood, and the order of $G$ is strictly greater than $11$, then $G$ contains a $W_6$ minor.
 	\end{cor}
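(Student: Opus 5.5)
The plan is to argue by contradiction and split into the planar and nonplanar cases. Suppose $G$ is $3$-connected, $W_6$-minor-free, has more than $11$ vertices, and no two cubic vertices of $G$ share a neighborhood.

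\textbf{Nonplanar case.} By Theorem~\ref{thm1.2}, $G$ is a minor of one of the finitely many listed graphs. Most of these have a fixed small order. I expect each of $V_8+\cdots$, $K^{3,1}_{4,3}$, $K^{4,1}_{4,3}$ and the $H$-graphs without superscripts to have at most $11$ vertices; this should be read off from the figures. A minor has no more vertices than its host, so none of these can host $G$.

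The remaining hosts are the families with superscripts $i,n,k,r$: $V^i_8+13+16+36$, $\mathrm{cube}^1+3x$, $\mathrm{cube}^n+\cdots$, $H^k_{2a1}+\cdots$, $H^k_{2a2}+\cdots$, $H^k_{2c1}+\cdots$, $H^k_{1b1}+\cdots$ and $H^{k,r}_{2e}+\cdots$. Each of these is a bounded-size core together with arbitrarily many cubic vertices attached to a fixed triple of core vertices, at most two triples per family. The key step is a structural claim about how a $3$-connected minor $G$ of such a graph $H$ sits inside it. Take a minor model of $G$ in $H$. All but $O(1)$ of the branch sets can be assumed to be single added cubic vertices. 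Those vertices remain in $G$ with degree exactly $3$, since a $3$-connected graph has minimum degree at least $3$ and their degree cannot increase. Their neighborhoods in $G$ are the images of the fixed triple, or at least lie among the branch sets containing those core vertices.

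So if $G$ retains two added vertices attached to the same triple, they are cubic in $G$ with identical neighborhoods, which is forbidden. Hence $G$ keeps at most one added vertex per triple. That bounds $|V(G)|$ by the core order plus the number of triples, and checking each family should give at most $11$.

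\textbf{Planar case.} Here I would invoke Gubser's characterization~\cite{gubser1993planar} of $3$-connected planar $W_6$-minor-free graphs. Its infinite families should likewise be built from bounded cores with repeated degree-$3$ vertices sharing attachments, for instance the $K_{2,m}$-type structures with added edges. The same counting argument then bounds the order.

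\textbf{Main obstacle.} The hardest step is making the claim about minors of the infinite families rigorous. A contraction might merge an added vertex with a core vertex, or two added vertices might end up with different neighborhoods because core vertices on the triple were contracted apart. I would handle this by noting the following. Contracting an added cubic vertex into a core vertex leaves the remaining added vertices on that triple with still-identical neighborhoods. Any branch set containing two added vertices must contain a core vertex of the triple, because the added vertices are only adjacent to the triple. So at most three added vertices per triple can be absorbed into branch sets; the rest remain distinct cubic vertices with equal neighborhoods. This forces at most $3+1$ surviving contributions per triple. I would then verify the explicit bound $\le 11$ family by family, where the exact core sizes matter and a finer case check may be needed.
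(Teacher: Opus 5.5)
Your overall route is the one the paper intends. The corollary is stated without proof, to be read off from Lemma~\ref{lemma4.6} for planar $G$ and from Theorem~\ref{thm1.2} for nonplanar $G$, in the spirit of Lemma~\ref{lemma3.4}. Your key step is also correct. A branch set avoiding the core is a connected set of pairwise nonadjacent added vertices, hence a single vertex $a$. Since $G$ has minimum degree at least $3$, the three vertices of $a$'s triple lie in three distinct branch sets, and these are exactly the neighbors of $a$ in $G$. So two such singletons on one triple would be cubic vertices with the same neighborhood, which gives $|V(G)|\le c+t$, where $c$ is the core order and $t$ the number of triples.

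Two other parts of your write-up are wrong, however. First, the planar case has no infinite families to handle. Lemma~\ref{lemma4.6} says outright that every $3$-connected planar $W_6$-minor-free graph has fewer than $12$ vertices, so that case is immediate and needs no hypothesis on cubic vertices. The structure you guess could not occur anyway: three vertices with a common $3$-element neighborhood span a $K_{3,3}$, so a planar graph has at most two cubic vertices with a given neighborhood.

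Second, your ``main obstacle'' paragraph replaces the correct count with one that is false and too weak.
\begin{itemize}
\item The claim ``at most three added vertices per triple are absorbed'' is false: a branch set containing one triple vertex can absorb arbitrarily many added vertices.
\item The claim is also irrelevant, since absorbed vertices create no new vertex of $G$.
\item A count of ``$3+1$ per triple'' would not give $11$. The exact bound $c+t$ is needed, because it already equals $11$ for $H^k_{1b1}+34+35+45+5z$ ($c=10$, $t=1$) and for $H^{k,r}_{2e}+34+35+3y+45+5y$ ($c=9$, $t=2$).
\end{itemize}
Drop that paragraph and finish with the explicit orders. These follow from the fact that a $T_i$-sum of $G_1$ with $G_2$ has $|V(G_1)|+|V(G_2)|-2-i$ vertices. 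The finite hosts in Theorem~\ref{thm1.2} have at most $10$ vertices, the largest being $H_{1a1}$. For the families based on $V^i_8$, $\mathrm{cube}^n$, $H^k_{2a1}$, $H^k_{2a2}$, $H^k_{2c1}$, $H^k_{1b1}$ and $H^{k,r}_{2e}$, the value of $c+t$ is $9,9,10,10,10,11,11$ respectively.
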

 	
\section{Preliminaries}
	
	Let $G\setminus e$ denote the graph obtained from $G$ by deleting an edge $e$. Conversely, $G$ can be recovered from $G\setminus e$ by adding the edge $e$. We use $G/e$ to denote the graph obtained from $G$ by first contracting an edge $e$, then deleting all but one edge from each parallel family. The reverse operation of contracting an edge is splitting a vertex. Let $N_G(v)$ denote the set of vertices adjacent to $v$. Let $(X,Y)$ be a partition of $N_G(v)$ such that
	$|X|,|Y|\geq 2$. Splitting $v$ produces a graph $G'$ obtained from $G \setminus v$ by adding two new adjacent vertices $x$ and $y$, joining $x$ to all vertices in $X$, and joining $y$ to all vertices in $Y$. The graph $G'$ is called a split of $G$.
	
	Let $G_1$ and $G_2$ be two $3$-connected graphs with cubic vertices $x\in V(G_1)$ and $y\in V(G_2)$. A graph $G$ is called a $T$-sum of $G_1$ with $G_2$ if it is obtained by deleting $x$ and $y$, adding a matching between $N_{G_1}(x)$ and $N_{G_2}(y)$, contracting some of the matching edges, and then simplifying the resulting graph.
	If exactly $i$ matching edges are contracted, then $G$ is called a $T_i$-sum.
	
	For $k=0,1,2$, we define a $k$-sum as follows. Let $G_1$ and $G_2$ be disjoint graphs with more than $k$ vertices. The $0$-sum of $G_1$ with $G_2$ is their disjoint union; a $1$-sum of $G_1$ with $G_2$ is obtained by identifying one vertex of $G_1$ with one vertex of $G_2$; and a $2$-sum of $G_1$ with $G_2$ is obtained by identifying an edge of $G_1$ with an edge of $G_2$, where the common edge may be deleted after the identification.
	
	\begin{lem}[\cite{ding2013characterization}]
	If $H$ is $3$-connected, then the $H$-minor-free graphs are precisely the graphs obtained by taking $0$-, $1$-, and $2$-sums of graphs in the following classes: $K_1$, $K_2$, $K_3$, and $3$-connected $H$-minor-free graphs.
	\end{lem}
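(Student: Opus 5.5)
This lemma reduces the study of $H$-minor-free graphs to the $3$-connected case. I would prove it by the standard decomposition along small separations, using that a $3$-connected $H$ cannot be split across a separation of order at most $2$. The two directions are handled separately.

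\emph{Graphs built by sums are $H$-minor-free.} Suppose $G$ is a $k$-sum ($k\le 2$) of $G_1$ and $G_2$, each $H$-minor-free, and suppose for contradiction that $H\preceq G$. Fix a model of $H$ in $G$: disjoint connected branch sets $B_h$, one for each $h\in V(H)$, with an edge of $G$ joining $B_h$ and $B_{h'}$ whenever $hh'\in E(H)$. Since $H$ is $3$-connected, the $\le 2$ shared vertices cannot separate two branch sets lying strictly on opposite sides. More precisely, at most $k\le 2$ branch sets meet the shared set $S=V(G_1)\cap V(G_2)$. Removing those branch sets corresponds to deleting at most two vertices of $H$, and what remains of $H$ is still connected. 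Hence all branch sets avoiding $S$ lie on one side, say in $G_1-S$.

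Now restrict each branch set $B_h$ to $B_h\cap V(G_1)$. Any portion of $B_h$ that wanders through $G_2$ leaves and re-enters via $S$. For $k=2$, with $S=\{a,b\}$, such a detour can be replaced by the edge $ab$. That edge is available in $G_1$, because in the $2$-sum $ab$ is the identified edge of $G_1$ and is kept in $G_1$ even if it is deleted from $G$. Similarly, an $H$-edge realized through $G_2$ between the branch sets containing $a$ and $b$ is realized by $ab$ in $G_1$. The result is a model of $H$ in $G_1$, a contradiction. Iterating this over the whole sum decomposition handles arbitrary combinations. Trivially, $K_1$, $K_2$, $K_3$ are $H$-minor-free, because a $3$-connected $H$ has at least $4$ vertices.

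\emph{Every $H$-minor-free graph decomposes this way.} I would induct on $|V(G)|$. If $G$ is disconnected, it is a $0$-sum; if it has a cut vertex, it is a $1$-sum; in each case the pieces are minors of $G$, hence $H$-minor-free. If $G$ is $2$-connected but not $3$-connected and has more than $3$ vertices, take a $2$-separation $(G_1,G_2)$ with $S=\{a,b\}$. Let $G_i'=G_i+ab$. Since $G$ is $2$-connected, $G_2$ contains an $a$–$b$ path, so $G_1'$ is a minor of $G$ (contract that path to the edge $ab$); the same holds for $G_2'$. Thus both pieces are $H$-minor-free, and $G$ is their $2$-sum, with $ab$ deleted if it was not an edge of $G$. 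Applying induction to the smaller graphs $G_1'$ and $G_2'$ finishes this case. What remains are $3$-connected graphs, which are in the list by assumption, and graphs on at most $3$ vertices, which are sums of copies of $K_1$, $K_2$, $K_3$.

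\emph{Main obstacle.} The delicate step is the first direction: rerouting branch sets across a $2$-separation. It needs care when a single branch set contains both $a$ and $b$, and when the branch sets avoiding $S$ are not all on one side. The case analysis rests on the fact that deleting at most two vertices of $H$ leaves it connected.
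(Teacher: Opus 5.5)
Your proof is correct. The paper gives no proof of this lemma; it simply quotes it from Ding. Your argument is the standard one behind that result. In one direction, a model of a $3$-connected $H$ in a $k$-sum ($k\le 2$) can be pushed into a single summand, because removing the at most two branch sets that meet the shared vertices leaves $H$ connected. In the other direction, an $H$-minor-free graph is split along separations of order at most $2$ into pieces $G_i+ab$, each a minor of $G$ via an $a$--$b$ path through the other side.

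Two small points you could tighten:
\begin{itemize}
\item When a branch set meets $S$ in only one vertex, an excursion into $G_2-S$ must return through that same vertex. Such an excursion can therefore just be discarded rather than replaced by $ab$, and it is worth saying so explicitly.
\item The worry in your last paragraph about branch sets avoiding $S$ lying on both sides is moot. You had already ruled this out using the connectivity of $H$ minus at most two vertices.
\end{itemize}
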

	
	Thus, it suffices to characterize the $3$-connected $W_6$-minor-free graphs. The next is a characterization of $V_8$-minor-free graphs.
	
	\begin{lem}[\cite{maharry2016structure}]\label{lemma2.2}
	Every internally $4$-connected $V_8$-minor-free graph $G$ is one of the following: a planar graph, a graph on at most seven vertices, a double-wheel or an alternating double-wheel, a graph with four vertices meeting all edges, or the line graph of $K_{3,3}$.
	\end{lem}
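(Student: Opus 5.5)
This lemma is quoted from \cite{maharry2016structure}, and the paper uses it as a black box. If I had to prove it, I would argue by induction along a chain theorem for internally $4$-connected graphs. Let $G$ be internally $4$-connected and $V_8$-minor-free, and suppose $G$ is not planar and has at least eight vertices; otherwise there is nothing to prove.

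\textbf{Setting up the induction.} Since $G$ is nonplanar, it has a $K_5$ or $K_{3,3}$ minor. Both are internally $4$-connected, so I would invoke the Johnson--Thomas splitter theorem for internally $4$-connected graphs. It says that $G$ arises from $K_5$ or $K_{3,3}$ by a sequence of graphs, each internally $4$-connected and each a minor of the next. Consecutive graphs differ by adding an edge, splitting a vertex, or one of a small number of bounded-size ``bridging'' operations. The only exceptions to such chains are planar ladders, Möbius ladders, and biwheel-type families, and these can be checked directly: planar ladders are planar, Möbius ladders on at least $8$ vertices contain $V_8$, and biwheels are alternating double-wheels. Every graph in the chain is a minor of $G$, so every graph in the chain is $V_8$-minor-free.

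\textbf{Closure under the chain operations.} Let $\mathcal{F}$ be the target family: double-wheels, alternating double-wheels, graphs with a $4$-vertex cover, $L(K_{3,3})$, and graphs on at most seven vertices. The core step is to show that $\mathcal{F}$ is closed under the chain operations whenever the result is internally $4$-connected and $V_8$-minor-free. I would treat each family separately.
\begin{enumerate}
\item For a double-wheel or alternating double-wheel with rim $C_n$, a chord of the rim, or a split of a hub whose neighbourhoods interleave on the rim, produces a subdivided Möbius ladder on eight vertices, hence a $V_8$ minor. The remaining splits stay inside the wheel families or reduce to the $4$-cover case.
\item For a graph with a $4$-vertex cover $\{a,b,c,d\}$, the other vertices form an independent set. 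I would show that adding an edge between two of these vertices, or splitting a cover vertex so that its neighbourhood is divided nontrivially among at least two of them, yields $V_8$: route the eight-cycle through the four cover vertices and four independent vertices, alternating between them.
\item For $L(K_{3,3})$, a direct check shows that every internally $4$-connected single-step extension contains $V_8$.
\end{enumerate}

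\textbf{Main obstacle.} I expect the hardest step to be the transition from at most seven vertices to eight or nine vertices. The class of internally $4$-connected nonplanar graphs on seven vertices is large and has no uniform structure. I would first use a computer-checkable enumeration of internally $4$-connected graphs on $8$ and $9$ vertices that have a $K_5$ or $K_{3,3}$ minor and no $V_8$ minor, and verify that each falls into one of the structured families. Once the chain reaches a large member of a structured family, the arguments in the previous paragraph take over. The delicate points are that the bridging operations of the splitter theorem can add two vertices at once, and that these must be controlled in the same case analysis.
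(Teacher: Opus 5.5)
\textbf{Verdict: genuine gap.} The paper does not prove this lemma. It quotes it from Maharry and Robertson, and citing it is the right move here.

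What you give in its place is a plan, not a proof. Everything that carries weight is announced rather than carried out:
\begin{itemize}
\item the precise form of the Johnson--Thomas theorem, including its hypotheses on the starting graph and its list of exceptional families, which you only paraphrase;
\item closure of each family under \emph{every} internally $4$-connected chain step (you never treat splitting a degree-$4$ vertex outside the cover, nor any of the two-vertex operations);
\item the check for $L(K_{3,3})$;
\item the enumeration on $8$ and $9$ vertices.
\end{itemize}
Note also that a two-vertex step from a member on $8$ or $9$ vertices lands beyond your enumeration. So your closure arguments must already work for such small members; you cannot assume the family members are ``large''.

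Moreover, the one closure argument you actually describe fails. In item 2 you claim that adding an edge $y_1y_2$ between two vertices outside the cover $X=\{a,b,c,d\}$ yields $V_8$, via an $8$-cycle alternating between four cover vertices and four independent vertices.
\begin{itemize}
\item \emph{The routing cannot work.} In such a cycle $v_0v_1\cdots v_7$, every diametric chord $v_iv_{i+4}$ joins two vertices of the same side, so two chords must link pairs of independent branch vertices. But an independent vertex has no neighbours except cover vertices, all of which are branch vertices, and, in the case of $y_1$ and $y_2$, each other. Hence at most one such chord can be realised.
\item \emph{The claim itself is false.} $K_{4,4}+y_1y_2$ is $4$-connected and nonplanar, yet has no $V_8$ minor. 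A $V_8$ minor of an $8$-vertex graph is a spanning subgraph. Being nonbipartite, it must use $y_1y_2$. Then the cover side needs $12$ edges to the other side, while the other side spends $2$ of its degree sum $12$ on $y_1y_2$ and so supplies only $10$.
\end{itemize}
This graph stays in the target family only because $\{y_1,\dots,y_4\}$ is itself a $4$-vertex cover. So the statement you need is ``contains $V_8$, or still lies in the family, possibly with a different cover''. When $V_8$ does appear, the model is of a different kind. For example, in $K_{4,5}+y_1y_2$ a $V_8$ minor exists only after contracting an edge from a cover vertex to an independent vertex other than $y_1,y_2$. Until these case analyses are actually done, your outline establishes nothing beyond what the citation already provides.
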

	
	\begin{figure}[H]
		\centering
		\input{V8.tpx}
	\end{figure}
	
	\begin{lem}[\cite{ferguson2015excluding}]\label{lemma2.3}
		A $3$-connected graph is $V_8$-minor-free if and only if it is constructed by repeated $T$-sums of $K_4$ and internally $4$-connected $V_8$-minor-free graphs.
	\end{lem}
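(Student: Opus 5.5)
The plan is to prove the two directions separately. Both rest on one fact: $V_8$ is cubic and internally $4$-connected, so every $3$-separation of $V_8$ has a $K_{1,3}$ side.

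\emph{Sufficiency.} I will show that a $T$-sum $G$ of $G_1$ and $G_2$ has a $V_8$ minor only if $G_1$ or $G_2$ does. Then induction on the number of summands gives the result, since $K_4$ is $V_8$-minor-free.

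1. The matched and contracted vertices form a set $S$ with $|S|\le 3$. The set $S$ separates the part coming from $G_1-x$ from the part coming from $G_2-y$, so $G$ has a separation $(A,B)$ with $V(A)\cap V(B)=S$.
2. Take a $V_8$ model $\{X_v\}$ in $G$. At most $|S|\le 3$ branch sets meet $S$, and $V_8$ has no nontrivial separation of order at most $3$. Hence, after relabelling, all but at most one branch set lies in $A$.
3. In the cubic case, the remaining branch set touches $S$ through at most three edges, so it can be rerouted through $x$.
4. Concretely, $G_1$ contains a copy of $A$ together with $x$ joined to the three matched vertices. The part of the model on the $B$ side can be replaced by $x$ and the edges $xN(x)$, which yields a $V_8$ minor of $G_1$.
5. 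Contracted matching edges only identify vertices on the $G_1$ side, so the same argument goes through after uncontracting in the model. Edges of $G[S]$ that came from $G_2$ are the only extra concern. I expect to handle them by noting that the three internally disjoint paths in $G_2$ from $y$, available by $3$-connectivity, realise them as minor edges inside the $x$-star.

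\emph{Necessity.} I will argue by induction on $|V(G)|$.

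1. If $G$ is internally $4$-connected, or $|V(G)|\le 4$ (so $G=K_4$), we are done.
2. Otherwise choose a $3$-separation $(A,B)$ with separator $S$ and neither side isomorphic to $K_{1,3}$. Choose it so that $B-S$ is connected and every vertex of $S$ has a neighbour in $B-S$; this is possible by $3$-connectivity, taking a component of $G-S$.
3. Let $G_1$ be $A$ plus a new vertex $x$ adjacent to $S$. Define $G_2$ symmetrically, choosing $A$ so that its side is also connected.
4. Each $G_i$ is a minor of $G$: contract the connected set $B-S$ to a single vertex. Hence each $G_i$ is $V_8$-minor-free.
5. Each $G_i$ is $3$-connected: a $2$-cut of $G_i$ would give a $2$-cut of $G$, because $x$ stands in for a connected piece attached to all of $S$.
6. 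Each $G_i$ is smaller than $G$, since the corresponding side of the separation has at least two vertices outside $S$.
7. $G$ is recovered as the $T_3$-sum of $G_1$ and $G_2$: contract all three matching edges, keeping each edge of $G[S]$ in whichever summand it is placed. By induction, each $G_i$ is a repeated $T$-sum of $K_4$'s and internally $4$-connected $V_8$-minor-free graphs, hence so is $G$.

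\emph{Main obstacle.} The delicate step is the bookkeeping in the sufficiency direction when some matching edges are contracted ($T_1$- and $T_2$-sums). There the separator vertices are shared, and one must check that no branch set of the model uses both sides in an essential way. I would first try the standard "cubic vertex absorbs a $3$-separation side" argument: replace the smaller side of the model by a single vertex. I would then verify directly that $V_8$ being cubic and having girth $4$ prevents two branch sets from needing the same side.
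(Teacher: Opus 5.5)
The paper does not prove this lemma; it is quoted from Ferguson. Judged on its own, your proposal has a genuine gap in the necessity half, and the sufficiency half is the right idea but not yet an argument.

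\emph{Necessity: Steps 5--7 fail.}

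\begin{enumerate}
\item Step 5 is false. Suppose a separator vertex $s\in S$ has only one neighbour on the $B$-side, and no edge of $G[S]$ at $s$ is placed in $B$. Then $s$ has degree $2$ in $B+y$, so $B+y$ is not $3$-connected.
\item Step 6 is unjustified. ``Neither side is $K_{1,3}$'' does not give two vertices outside $S$. A cubic vertex $v$ together with an edge inside $N(v)$ is already not $K_{1,3}$, and this is exactly how cubic vertices in triangles violate internal $4$-connectivity (Lemma~\ref{lemma4.2}).
\end{enumerate}

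These are not marginal cases. Delete one vertex from each of two copies of $K_{3,3}$, and join the resulting triples $\{a_1,a_2,a_3\}$ and $\{b_1,b_2,b_3\}$ by the matching $a_ib_i$. The resulting graph $G$ has these properties:
\begin{enumerate}
\item It is $3$-connected and not internally $4$-connected.
\item It is $V_8$-minor-free. A $V_8$-subdivision would need seven branch vertices on one side of the matching cut, because $V_8$ is cyclically $4$-edge-connected, but each side has only five vertices.
\item Every nontrivial $3$-separator of $G$ is a transversal of the matching, and it contains a vertex with a single neighbour across.
\end{enumerate}
So your construction never yields two $3$-connected summands for this $G$. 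Similarly, for the prism and for $W_4$, every $3$-separation has a side with only one vertex outside $S$. One of your summands then has the order of $G$ and a vertex of degree $2$.

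None of these three graphs is a $T_3$-sum of two smaller $3$-connected graphs. They are $T_0(K_{3,3},K_{3,3})$, $T_0(K_4,K_4)$ and $T_1(K_4,K_4)$ respectively (compare Table~\ref{table7}). The repair needs two things:
\begin{enumerate}
\item Choose the separation extremally, and let a separator vertex with a unique neighbour across contribute an \emph{uncontracted} matching edge. This gives a $T_i$-sum with $i<3$, in which both summands are $3$-connected.
\item Treat graphs whose only defects are cubic vertices in triangles via $T_1$/$T_2$-sums with $K_4$, inducting on $|V|+|E|$ rather than on $|V|$.
\end{enumerate}

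\emph{Sufficiency.} Two steps do not hold as written:
\begin{enumerate}
\item Step 2 concludes that all but one branch set lies in $A$. This is inaccurate, since branch sets meeting $S$ can straddle $S$.
\item Step 5 proposes to use paths of $G_2$ from $y$ inside $G_1$, where they do not exist.
\end{enumerate}

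Here is what does work. Since $V_8$ is cubic, work with a subdivision $\Sigma$. Because every vertex cut of $V_8$ of size at most $3$ is the neighbourhood of a vertex, one of $A-S$, $B-S$ (say $B-S$) contains at most one branch vertex.
\begin{enumerate}
\item If $B-S$ contains exactly one branch vertex, then $\Sigma\cap B$ is a subdivided claw, and you replace it by $x$.
\item If $B-S$ contains no branch vertex, then the part of $\Sigma$ on the $B$-side consists of at most two internally disjoint paths between vertices of $S$. This includes any edges of $G[S]$ that come only from $G_2$.
\begin{enumerate}
\item Two such paths with the same ends would give a loop or a double edge in $V_8$, or make $\Sigma$ a cycle.
\item Three such paths would give a cycle of length at most $3$ in $V_8$, which has girth $4$.
\end{enumerate}
\item Route a single such path through $x$. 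If there are two, they meet at some $s\in S$. Concatenate them when $s$ is a subdivision vertex; otherwise, move the branch vertex from $s$ to $x$.
\end{enumerate}
For $T_i$-sums with $i<3$, apply this to the separation whose side is exactly $G_1-x$ or $G_2-y$.
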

	
	We shall divide the proof according to whether a $3$-connected graph contains a $V_8$ minor. Section~3 treats the $V_8$-containing case. Section~4 determines the internally $4$-connected
	$\{V_8,W_6\}$-minor-free graphs. Section~5 extends the analysis to all $3$-connected nonplanar $\{V_8,W_6\}$-minor-free graphs. 

\section{$V_8$-containing graphs}

	In this section, we consider $W_6$-minor-free graphs that contain a $V_8$ minor. A graph is said to be $V_8$-containing if it contains $V_8$ as a minor. The following is a classical result of Seymour~\cite{seymour1980decomposition}, which explains how $3$-connected graphs can be generated.

	\begin{lem}\label{lemma3.1}

	Let $H$ and $G$ be $3$-connected graphs such that $H$ is a proper minor of $G$. Suppose that $H$ is not isomorphic to $W_3$ and that $G$ is not isomorphic to a wheel. Then $G$ has a $3$-connected minor $F$ such that $H\preceq F$, and $F$ is obtained from $H$ by either adding one edge or splitting one vertex.
	\end{lem}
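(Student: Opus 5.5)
This statement is Seymour's Splitter-type theorem, so the plan is to derive it from Tutte's Wheels Theorem together with a connectivity-preserving reduction.

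Write $G$ as obtained from $H$ by a sequence of deletions and contractions. The plan is to argue by induction on $|E(G)|-|E(H)|$, working from the top down. It suffices to find an edge $e$ of $G$ with two properties: $G\setminus e$ or $G/e$ (after simplification) is $3$-connected, and it still has $H$ as a minor. Iterating this descends through $3$-connected minors of $G$, each containing $H$, until reaching a $3$-connected $F$ with $H\preceq F$ and $F$ exactly one step above $H$. A single step above $H$ is either the addition of an edge (the inverse of deletion) or the split of a vertex (the inverse of contraction in a simple graph). For the inverse of a contraction, $3$-connectivity of $F$ forces both new vertices to have degree at least $3$. This is precisely the requirement $|X|,|Y|\ge2$ in the definition of splitting.

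The key step is the following. Given a $3$-connected $G\neq$ wheel and a proper $3$-connected minor $H\not\cong W_3$, there is an edge $e$ such that some $G'\in\{G\setminus e, G/e\}$ is $3$-connected and $H\preceq G'$.

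First, fix an $H$-minor. Among all pairs $(C,D)$ with $G\setminus D/C \cong H$, pick an edge $e$ belonging to $D$ or to $C$. If $e\in D$ and $G\setminus e$ is $3$-connected, or $e\in C$ and $G/e$ is $3$-connected, we are done.

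Otherwise, I would use the standard fact (Tutte) that in a $3$-connected graph, for every edge $e$, either $G\setminus e$ or $G/e$ is $3$-connected up to series/parallel simplification. Non-$3$-connectivity then yields a $2$-separation of $G\setminus e$ or of $G/e$. Exploit it as follows. Since $H$ is $3$-connected, $H$ lives essentially on one side of the $2$-separation, so the other side can be reduced to a path or a single edge without destroying the $H$-minor. This lets us re-choose $(C,D)$: the bad edge is swapped with one whose removal (deleting rather than contracting, or vice versa) keeps both $3$-connectivity and the $H$-minor.

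The obstacle is when every edge in every choice of $(C,D)$ fails. In that case each edge of $G$ is \emph{essential} in both senses, i.e. neither $G\setminus e$ nor $G/e$ is $3$-connected. Tutte's Wheels Theorem then says $G$ is a wheel, which is excluded by hypothesis. The main difficulty, and the step where I expect to spend the effort, is the following. If $G$ has a non-essential edge $e$, we must show it can be chosen compatibly with an $H$-minor. A $3$-connected $G\setminus e$ might lose $H$; we then need either $G/e$ or another edge.

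Here I would follow Seymour's argument, using a triangle or triad containing $e$ (a fan structure). Chains of triangles and triads form fans. If a fan covers all of $G$, then $G$ is a wheel. Otherwise, the end of a maximal fan gives an edge $f$ for which the appropriate operation is $3$-connected. Since $H\not\cong W_3$, the minor $H$ cannot use the full fan rigidly, and so $H$ survives the operation on $f$. This is the reason for the hypothesis $H\not\cong W_3$: every wheel contains $W_3$ and is irreducible. Once the key step is proved, the induction gives $F$ as claimed.
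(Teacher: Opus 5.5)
The paper gives no proof of this lemma; it simply quotes Seymour's Splitter Theorem. Your attempt to derive it from Tutte's Wheels Theorem has a genuine gap, and it sits exactly where the theorem has content.

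The Wheels Theorem only supplies an edge $e$ with $G\setminus e$ or $G/e$ simple and $3$-connected. The whole difficulty is that every such $e$ might destroy all $H$-minors. Your inference ``if every edge fails for every choice of $(C,D)$, then every edge is essential in both senses'' is false for precisely this reason. An edge with $G\setminus e$ $3$-connected but $H\not\preceq G\setminus e$ lies in no $D$, so the Wheels Theorem cannot be applied.

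The other two steps do not close the gap either. The $2$-separation paragraph (``re-choose $(C,D)$ and swap the bad edge'') never identifies an edge and never verifies $3$-connectivity after the swap. The fan paragraph asserts the conclusion (``so $H$ survives the operation on $f$'') by deferring to ``Seymour's argument'', which is the statement under proof.

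The missing mechanism is as follows. If $H\preceq G\setminus e$ and $e$ lies in a triad $\{e,f,g\}$, then $f,g$ are in series in $G\setminus e$. Since $H$ is $3$-connected, one may take $f$ contracted, giving $H\preceq G/f$; the dual statement holds for triangles. One then grows a maximal fan of alternating triangles and triads, carrying the $H$-minor along. Finally one proves that the end edge of the fan gives a simple $3$-connected deletion or contraction, unless the fan exhausts $G$, i.e.\ $G$ is a wheel. None of this is carried out.

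Even granting that step, three things need repair.

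\emph{Simplification.} You allow $G/e$ to be $3$-connected only after simplification. If $e=uv$ lies in a triangle $uvw$, undoing the contraction makes $w$ adjacent to both new vertices. That is a split plus an added edge, not a split along a partition $(X,Y)$ as the paper defines it. You must require $G/e$ itself to be simple, which is how the Wheels and Splitter Theorems are actually stated.

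\emph{Wheels in the descent.} Your descent needs every intermediate graph to be a non-wheel. When $H\cong W_n$ with $n\ge 4$, it can land on a larger wheel $W_m$ and stall; for example, from $G=W_6$ plus a chord, deleting the chord gives $W_6$. The lemma assumes only that $G$ is not a wheel, which is weaker than Seymour's hypothesis that $G$ has no wheel minor larger than $H$. So a separate argument is needed. For instance, apply the theorem to the largest wheel minor $W_m$ of $G$. Then, in the resulting one-step extension of $W_m$ (a chord or a hub split), contract rim edges and delete surplus spokes to reach a one-step extension of $W_n$; this is possible because $n\ge 4$.

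\emph{Role of $H\not\cong W_3$.} This hypothesis plays no role in your key step. For $H=K_4$ that step follows directly from the Wheels Theorem, since every simple $3$-connected graph has a $K_4$-minor. The hypothesis is needed only because $K_4$ is complete and cubic, so it admits no edge addition or split at all, as $K_4\preceq K_5$ shows.
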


	Splitting a vertex increases the order of the graph by one, whereas adding an edge does not change the order of the graph. The computation proceeds as follows. Starting from the fixed labeling of $V_8$, we consider all subsets of the nonedges of $V_8$. For each resulting graph, we test whether it contains a $W_6$ minor. Isomorphic duplicates are then removed. This produced exactly the 49 graphs listed in Table~\ref{table1}. 

	\begin{lem}\label{lem3.2}
	The only $3$-connected graphs of order $8$ that are $V_8$-containing and $W_6$-minor-free are the $49$ graphs listed in Table~\ref{table1}. 
	\end{lem}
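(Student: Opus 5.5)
The key observation is that a graph $G$ of order $8$ with a $V_8$ minor must contain $V_8$ as a spanning subgraph. A minor of $G$ on the same number of vertices as $G$ cannot use any contractions, so it is obtained by edge deletions alone. Hence every $V_8$-containing graph of order $8$ is isomorphic to $V_8$ plus some set $S$ of nonedges of $V_8$, with respect to a fixed labeling $1,\dots,8$ of the cycle. Every such graph is automatically $3$-connected, because $V_8$ is $3$-connected and adding edges preserves this. The lemma therefore reduces to a finite enumeration over subsets $S$ of the nonedges of $V_8$. There are $\binom{8}{2}-12=16$ nonedges: the pairs at distance $2$ or $3$ along the $8$-cycle.

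I would organize the enumeration by the symmetry group of $V_8$, the dihedral group of order $16$, acting on the nonedges. These fall into two orbits: the $8$ pairs at cycle-distance $2$, such as $13$, and the $8$ pairs at cycle-distance $3$, such as $14$. I would then proceed by adding one edge at a time, using that $W_6$-minor-freeness is closed under edge deletion. If $V_8+S$ contains $W_6$, then so does every $V_8+S'$ with $S'\supseteq S$. It therefore suffices to grow the family level by level. At level $k$, take the $W_6$-minor-free graphs from level $k-1$ up to isomorphism, add each remaining nonedge, test for a $W_6$ minor, and reduce modulo isomorphism, for example by canonical labeling. This yields the minimal ``obstructing'' edge sets together with the downward-closed family of survivors.

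For each candidate graph the $W_6$ test is concrete. A $W_6$ minor in an $8$-vertex graph has at most $2$ contractions, so it suffices to check the graphs obtained by contracting at most two edges for a $W_6$ subgraph. Alternatively, I would search directly for a hub branch set together with a $6$-cycle of branch sets, using at most $8$ vertices in total. To certify freeness by hand I would use structural arguments: for example, a hub of degree at least $6$ in the minor requires a branch set whose neighborhood meets $6$ disjoint branch sets forming a cycle. Counting degrees and cubic vertices then quickly rules this out in sparse cases.

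The main obstacle is correctness and completeness of the isomorphism reduction and of the $W_6$-minor tests across all levels, since there are $2^{16}$ raw subsets. This is essentially a computer verification, and I would present it as such. I would cross-check the count of $49$ in two ways: by an independent brute-force run over all subsets, and by verifying that the listed family is closed under removing added edges and that each non-listed one-edge extension of a listed graph contains an explicit $W_6$ minor.
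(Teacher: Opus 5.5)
Your proposal is correct and is essentially the paper's argument. The paper also reduces to $V_8$ plus a subset of its $16$ nonedges: it phrases this via edge additions versus vertex splits, while you use the direct observation that a minor of the same order is a spanning subgraph. It then tests every such graph for a $W_6$ minor by computer and removes isomorphic duplicates, and your level-by-level pruning is just a more efficient organization of the same exhaustive search.

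One small sharpening: since $W_6$ has seven vertices, a $W_6$ minor of an $8$-vertex graph needs at most one contraction. So it suffices to look for a $W_6$ subgraph in $G$ and in each $G/e$.
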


	\begin{table}[H]
	\centering
	\caption{$V_8$-containing and $W_6$-minor-free graphs of order 8} 
	\label{table1}
	\begin{tabular}{ | m{4cm}<{\centering} | m{10cm}<{\centering}  |}
		\hline
		\textbf{Number of Additional Edges}  &  	\textbf{$W_6$-Minor-Free Graphs}  \\ \hline
		$0$ &  $V_8$ \\ \hline
		$1$ &  $V_8+13$, $V_8+14$ \\ \hline
		$2$ &  $V_8+13+14$, $V_8+13+16$, $V_8+13+24$, $V_8+13+25$, $V_8+13+46$, $V_8+13+47$, $V_8+14+16$, $V_8+14+25$, $V_8+14+27$, $V_8+14+58$ \\ \hline
		$3$ & $V_8+13+14+24$, $V_8+13+14+25$, $V_8+13+14+27$, $V_8+13+14+28$, $V_8+13+14+36$, $V_8+13+14+38$, $V_8+13+14+46$, $V_8+13+14+47$, $V_8+13+14+58$, $V_8+13+14+68$, $V_8+13+16+28$, $V_8+13+16+36$, $V_8+13+16+46$, $V_8+13+16+58$, $V_8+13+24+58$, $V_8+13+25+27$, $V_8+13+25+58$, $V_8+13+47+58$, $V_8+14+16+27$, $V_8+14+16+36$\\\hline
		$4$ & $V_8+13+14+24+25$, $V_8+13+14+24+27$, $V_8+13+14+24+58$, $V_8+13+14+25+27$, $V_8+13+14+25+58$, $V_8+13+14+27+28$, $V_8+13+14+27+47$, $V_8+13+14+28+47$, $V_8+13+14+36+46$, $V_8+13+14+36+47$, $V_8+13+14+38+46$, $V_8+13+14+38+47$, $V_8+13+14+58+68$, $V_8+14+16+27+47$ \\ \hline
		$5$ &  $V_8+13+14+24+25+58$, $V_8+13+14+27+28+47$ \\ \hline
		$\geq 6$  &  - \\ \hline
	\end{tabular}
	\end{table}

	Based on Lemma~\ref{lem3.2}, we consider splitting these graphs. For each integer $i\geq 1$, let $V^i_8$ be the graph obtained from $V_8$ by adding $i$ new cubic vertices, each adjacent precisely to $\{1,3,6\}$.
	
	\begin{lem}\label{lem3.3}
	 Every $3$-connected graph of order strictly greater than $8$ that is $V_8$-containing and $W_6$-minor-free is a minor of $V^i_8+13+16+36$ for some integer $i\geq 1$.
	\end{lem}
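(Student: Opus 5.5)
We argue by induction on $|V(G)|$, using Lemma~\ref{lemma3.1} with $H=V_8$ to control how a $3$-connected $V_8$-containing $W_6$-minor-free graph $G$ of order at least $9$ can arise. Since $G$ contains $V_8$ and is not a wheel (a wheel containing $V_8$ would contain $W_6$), Lemma~\ref{lemma3.1} gives a chain of $3$-connected minors $V_8=F_0\preceq F_1\preceq\cdots\preceq F_m=G$. Each $F_{j+1}$ is obtained from $F_j$ by adding an edge or splitting a vertex, and every $F_j$ is $V_8$-containing and $W_6$-minor-free, since minors inherit $W_6$-minor-freeness. Edge additions and splits commute sufficiently well for our purpose. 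More precisely, the last member of the chain with order $8$ is a $3$-connected $V_8$-containing $W_6$-minor-free graph of order $8$, so by Lemma~\ref{lem3.2} it is one of the $49$ graphs of Table~\ref{table1}. The next step in the chain is a split, giving a $3$-connected graph of order $9$.

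\textbf{Base case: order $9$.} For each of the $49$ graphs $F$ in Table~\ref{table1}, each vertex $v$ (up to automorphisms of $F$), and each partition $(X,Y)$ of $N_F(v)$ with $|X|,|Y|\ge 2$, we form the split and test whether it is $3$-connected and $W_6$-minor-free. This is the same kind of computer check used for Lemma~\ref{lem3.2}. The expected outcome is that every surviving split is a minor of $V^1_8+13+16+36$. Since splits of subgraphs are subgraphs of splits of supergraphs, it should suffice to split the maximal graphs of the table and then take $3$-connected spanning subgraphs. It remains to check that every order-$9$ $3$-connected $V_8$-containing $W_6$-minor-free graph arises this way. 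This holds because we may reorder the chain: any $3$-connected such graph $G_9$ of order $9$ has a $3$-connected minor of order $8$ containing $V_8$, and by Lemma~\ref{lemma3.1} this minor can be taken as $G_9$ with one edge contracted (plus possibly edges deleted). Finally, we add all possible edges to order-$9$ graphs and verify that only subgraphs of $V^1_8+13+16+36$ survive.

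\textbf{Inductive step.} Suppose every such graph of order $n\ge 9$ is a minor of $V^{i}_8+13+16+36$ for some $i$. Let $G$ have order $n+1$. By Lemma~\ref{lemma3.1}, $G$ contains a $3$-connected $V_8$-containing minor $G'$ of order $n$ with $G$ obtained from $G'$ by a split followed by edge additions. By induction, $G'\preceq V^{i}_8+13+16+36$, and in fact $G'$ is a spanning-type minor of the $n$-vertex graph, so up to isomorphism $G'$ is obtained from $V^{n-8}_8+13+16+36$ by deleting edges and contracting. The key structural claim is that in any $W_6$-minor-free split of a minor of $V^{i}_8+13+16+36$, the new vertex must be a cubic vertex attached exactly to $\{1,3,6\}$. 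Indeed, the triangle $\{1,3,6\}$ with its many cubic neighbours is the only place with room to spare. Splitting any other vertex, or attaching to a different triple, should produce a $W_6$ minor. Such a minor can be exhibited using one existing cubic vertex on $\{1,3,6\}$ together with the $V_8$ rim, reducing to the order-$9$ and order-$10$ computations. Hence it suffices to verify the statement computationally for orders $9$ and $10$ and then argue that the local pattern propagates. Whenever $G$ has at least two cubic vertices on $\{1,3,6\}$, deleting one of them yields a smaller graph of the same type, and any forbidden configuration already appears in a subgraph of order at most $10$.

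\textbf{Main obstacle.} The delicate part is the reduction from arbitrary order to bounded order. We need to show that any $W_6$ minor created by a bad split, or by a bad added edge, in a graph of order $n+1$ already appears after deleting all but one or two of the cubic vertices on $\{1,3,6\}$. I would first prove that these cubic vertices are pairwise interchangeable, since they all share the neighbourhood $\{1,3,6\}$. Any $W_6$ model then uses at most two of them essentially: the hub's branch set meets at most one, and the rim uses at most two as subdivision vertices between $1,3,6$. This justifies doing the computation only up to order $10$ or $11$, consistent with the bound $11$ in the Corollary.
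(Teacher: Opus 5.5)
Your route is the paper's: a splitter chain starting at $V_8$, an exhaustive check of the order-$9$ splits of the Table~\ref{table1} graphs that leaves only $V^1_8$ plus edges from $\{13,16,36\}$, and an induction in which every further split just adds another cubic twin of vertex $2$ on $\{1,3,6\}$. The gap is in your inductive step.

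First, the hypothesis ``$G'$ is a minor of some $V^i_8+13+16+36$'' does not determine $G'$, and ``in fact $G'$ is a spanning-type minor'' is not justified. You must carry the stronger statement that every such graph of order $n\ge 9$ is isomorphic to $V^{n-8}_8+S$ with $S\subseteq\{13,16,36\}$. For a $3$-connected spanning subgraph this is automatic, since every edge of $V^{n-8}_8+13+16+36$ other than $13,16,36$ meets a cubic vertex.

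Second, and more seriously, the reduction to bounded order is only asserted, and the argument you sketch for it proves the wrong implication. Showing that a $W_6$ model essentially uses at most one or two twins gives ``$G\succeq W_6$ implies $G$ minus surplus twins $\succeq W_6$''. That is the tool for the paper's Claim that $V^i_8+13+16+36$ is $W_6$-minor-free, not for this lemma. Here you must show that a non-conforming split (or edge addition) at order $n+1$ still contains a non-conforming configuration of bounded order. Only then does the finite check yield a $W_6$ minor that lifts to $G$. Your description also misses that the split must be at $1$, $3$ or $6$. After it, the twins no longer share the neighbourhood $\{1,3,6\}$ but are divided between $v'$ and $v''$, so ``delete all but one or two cubic vertices on $\{1,3,6\}$'' is not well defined.

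To close the gap, use the strengthened hypothesis: all vertices of $G'$ other than $1,3,6$ are cubic, so the split is at some $v\in\{1,3,6\}$. Your case ``splitting any other vertex'' is therefore vacuous. Since the vertices with neighbourhood $\{1,3,6\}$ are interchangeable, the split is determined up to isomorphism by two things: the partition of the at most four non-twin neighbours of $v$, and the number of twins on each side. Delete a twin from a side that keeps at least two vertices, and keeps a twin if it had one. The result is a split of a smaller $V^m_8+S$ with the same pattern, hence non-conforming whenever the original is, and it is a subgraph of $G$. Iterating leaves at most three twins, hence order at most $11$. The finite check there, with Table~\ref{table2} as the single-twin case, supplies the $W_6$ minor. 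Edge additions outside $\{13,16,36\}$ are treated the same way: delete twins not met by the new edge while keeping at least two. The paper's own induction step is equally terse (``reduces \dots\ to the cases listed in Table~\ref{table2}''), but it asserts the implication in the direction the lemma needs; yours argues for the converse.
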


	\begin{proof}
	Note that all vertices of $V_8$ are cubic, so no vertex can be split. Up to symmetry, there are only two possibilities for $V_8+e$, namely $V_8+13$ and $V_8+14$. For each admissible vertex split of these two graphs, the resulting graph contains a $W_6$ minor. The corresponding minor models are shown in Figure~\ref{fig3}.
	
	\begin{figure}[H]
		\input{v.tpx}
		\label{fig3}
	\end{figure}
	
	Next, we consider $V_8+e_1+e_2$, using  $V_8+13+ xy$ as an example. If the edge $xy$ is not incident with the edge $13$, then the split can be regarded as a split performed in either $V_8+13$ or $V_8+14$, followed by the addition of the edge $xy$. Since the split of $V_8+13$ or $V_8+14$ already yields a graph containing a $W_6$ minor, and adding edges preserves the existence of a minor, the resulting graph also contains a $W_6$ minor. Next, we consider $V_8+13+1x$. By symmetry and by the preceding reduction, it remains only to consider the cases in which $1'$ is adjacent to $6$ and one other vertex. The graph obtained after the split remains $W_6$-minor-free only when $N_{V_8+13+14}(1')\setminus\{1''\}=\{3,6\}$. The remaining cases were checked by the same exhaustive procedure. The $W_6$-minor-free outcomes are exactly those listed in
	Table~\ref{table2}.
		
	\begin{table}[H]
	\centering
	\caption{$W_6$-minor-free graphs via order $8\to 9$ vertex splits}
	\label{table2}
		\begin{tabular}{ | c | c|c|c|}\hline 
		\textbf{Original Graph} & \textbf{ Vertex} &	\textbf{Partition} & \textbf{Resulting Graph}  \\ \hline
		
		$V_8+13+16$ & $1$ & $\{3,6\}, \{2,5,8\}$ & $M$ \\ \hline
				
		$V_8+14+16$ & $1$ & $\{4,6\}, \{2,5,8\}$ & $M$ \\ \hline
		
		\multirow{3}{*}{	$V_8+13+16+36$} 
			& $1$ & $\{3,6\}, \{2,5,8\}$ & $M+36$ \\ \cline{2-4}
			& $3$ & $\{1,6\}, \{2,4,7\}$ & $M+36$ \\ \cline{2-4}
			& $6$ & $\{1,3\}, \{2,5,7\}$ & $M+13$ \\ \cline{1-4}
		\end{tabular}
		\end{table}
		
		Note that $M$ with vertex $1'$ deleted is isomorphic to $V_8$. We relabel $1''$ as $1$ and denote the graph $M$ by $V^1_8$. Equivalently, $V^1_8$ is obtained from $V_8$ by adding one new cubic vertex adjacent to $\{1,3,6\}$.
		Adding edges to $V^1_8$ and retaining only the $W_6$-minor-free graphs yields
		$V^1_8$, $V^1_8+13$, $V^1_8+36$, $V^1_8+13+36$, $V^1_8+16+36$, and $V^1_8+13+16+36$. Among the admissible vertex splits of these graphs, the only resulting graphs that remain $W_6$-minor-free are
		$V^2_8$, $V^2_8+13$, $V^2_8+36$, $V^2_8+13+36$, $V^2_8+16+36$, and $V^2_8+13+16+36$.
		
		We now prove by induction on $i$ that, after the first split, every further $W_6$-minor-free admissible split only adds one new cubic vertex adjacent to $\{1,3,6\}$. For $i=1$, this follows from the computation summarized in Table~\ref{table2}. Assume the statement holds for $i$. Since the newly added cubic vertices are pairwise nonadjacent and have the same neighborhood $\{1,3,6\}$, splitting any old vertex reduces, up to the same local configurations already considered for $i=1$, to the cases listed in Table~\ref{table2}. Splitting a newly added cubic vertex is impossible under our definition of vertex splitting, because its neighborhood has size three and cannot be partitioned into two parts of size at least two. Therefore, the only $W_6$-minor-free split produces $V^{i+1}_8$, possibly together with the same edge set among $\{13,16,36\}$.
	
		\begin{figure}[H]
			\input{v2.tpx}
		\end{figure}
		
		\begin{claim*}
			For each $i\geq 1$, the graph $V^i_8+13+16+36$ is $W_6$-minor-free.
		\end{claim*}

		The vertices added in the construction of $V^i_8$ have the same neighborhood $\{1,3,6\}$. A graph of order 6 that contains two vertices of degree two sharing the same neighbors cannot contain a cycle $C_6$. Therefore, we only need to consider the graph $V_8+13+16+36$, which is $W_6$-minor-free by Lemma~\ref{lem3.2}.
	\end{proof}

	We state a corollary of Lemma~\ref{lem3.3} that will be useful in Section 5.

	\begin{lem}\label{lemma3.4}
	Let $G$ be a $V_8$-containing and $W_6$-minor-free graph. If no two cubic vertices share the same  neighborhood, then its order must be $8$.
	\end{lem}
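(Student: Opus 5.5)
Presumably $G$ is meant to be $3$-connected, since Lemma~\ref{lem3.3} concerns $3$-connected graphs. The plan is to combine Lemma~\ref{lem3.3} with a structural check of $V^i_8+13+16+36$.

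\textbf{Order is at least $8$.} Since $G$ is $V_8$-containing, $|V(G)|\ge |V(V_8)|=8$.

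\textbf{Reduction to a minor.} Suppose, for a contradiction, that $|V(G)|>8$. By Lemma~\ref{lem3.3}, $G$ is a minor of $H_i:=V^i_8+13+16+36$ for some $i\ge 1$. Fix a minor model, i.e.\ pairwise disjoint connected branch sets $B_v\subseteq V(H_i)$ for $v\in V(G)$, such that $v,w$ adjacent in $G$ implies some edge of $H_i$ joins $B_v$ and $B_w$. Let $A=\{a_1,\dots,a_i\}$ be the added cubic vertices, each with neighbourhood exactly $\{1,3,6\}$. Each $a_j$ lies in at most one branch set, and $V(H_i)\setminus A$ has only $8$ vertices. Since $|V(G)|\ge 9$, at least $|V(G)|-8\ge 1$ branch sets consist only of vertices of $A$. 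Because $A$ is an independent set, each such branch set is a single vertex $\{a_j\}$.

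\textbf{Key step.} Take two vertices $v\ne w$ of $G$ with $B_v=\{a_j\}$ and $B_w=\{a_k\}$, $j\neq k$. In $G$, $v$ has degree at least $3$, and its neighbours must have branch sets meeting $\{1,3,6\}$. The branch sets are disjoint and only three such vertices exist, so $v$ has exactly three neighbours, namely the vertices $x_1,x_3,x_6$ whose branch sets contain $1$, $3$ and $6$ respectively. The same holds for $w$, so $v$ and $w$ are cubic vertices with the same neighbourhood. This contradicts the hypothesis.

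\textbf{Main obstacle.} It remains to handle the case where exactly one branch set, say $B_v=\{a_j\}$, consists of vertices of $A$. Then all $8$ vertices of $V(H_i)\setminus A$ are spread over the other $|V(G)|-1\ge 8$ branch sets. Each of these branch sets must contain a vertex outside $A$: it is not contained in $A$ by the choice of $v$, and any branch set meeting $A$ that is larger than one vertex must, being connected, also contain a neighbour $1$, $3$ or $6$. Hence $|V(G)|=9$ and every vertex of $V(H_i)\setminus A$ forms the core of its own branch set. Each other $a_k$ is either deleted or absorbed into a branch set through an edge to $1$, $3$ or $6$. The original vertices are pairwise distinct in $G$, and the absorbed $a_k$ can only add edges among $1,3,6$, which are already adjacent in $H_i$. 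So $G$ is a spanning subgraph of $V_8+13+16+36$ plus one vertex $v$ adjacent to $1,3,6$. Equivalently, $G\subseteq V^1_8+13+16+36$, and $G$ still contains a $V_8$ minor.

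The remaining task, which needs the exhaustive data behind Table~\ref{table2}, is to show that $G$ then has another cubic vertex with neighbourhood $\{1,3,6\}$. I expect this to be the main obstacle. In $V_8$ the vertices $1,3,6$ have neighbourhoods $\{2,8,5\}$, $\{2,4,7\}$ and $\{5,7,2\}$, so vertex $2$ is adjacent to $1$, $3$ and $6$. The plan is to argue that in any $V_8$-containing $3$-connected spanning subgraph, vertex $2$ must become cubic with neighbourhood $\{1,3,6\}$, so that $2$ and $v$ are twins. This is consistent with $M$ minus $1'$ being $V_8$: the new vertex plays the same role as vertex $2$.

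If the direct argument fails, I would instead use the induction in the proof of Lemma~\ref{lem3.3}. That induction shows that every $W_6$-minor-free split produces a twin of the new vertex, which directly contradicts the no-shared-neighbourhood hypothesis.
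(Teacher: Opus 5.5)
Your main argument is correct, and it takes a more self-contained route than the paper. The paper gives no separate proof; it records the lemma as a corollary of Lemma~\ref{lem3.3}. The implied reasoning is that the graphs of order $>8$ produced in that proof are $V^i_8$ with some of $13,16,36$ added. In each of these, every added vertex is a cubic twin of vertex $2$, since $N_{V_8}(2)=\{1,3,6\}$. Your fallback via the induction is essentially this route.

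Your branch-set count needs only the \emph{statement} of Lemma~\ref{lem3.3}, namely that $G$ is a minor of $V^i_8+13+16+36$. It does not need the explicit list of graphs extracted from the computer-assisted proof. The key step is sound: a singleton branch set $\{a\}$ with $N(a)=\{1,3,6\}$ forces a cubic vertex of $G$ with neighbourhood $\{x_1,x_3,x_6\}$, so two such branch sets give cubic twins.

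You are also right that $3$-connectivity must be assumed. The literal statement fails for $V_8$ with a pendant vertex attached: it has order $9$, is $V_8$-containing and $W_6$-minor-free, and no two of its cubic vertices share a neighbourhood.

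As written, though, you stop short in the one-singleton case, and the step you call the main obstacle is immediate. It needs neither Table~\ref{table2} nor $V_8$-containment beyond $|V(G)|\ge 8$. You have already shown that $G$ is then a $3$-connected spanning subgraph of $V^1_8+13+16+36$. In that graph vertex $2$ has degree exactly $3$ with neighbourhood $\{1,3,6\}$, so minimum degree $3$ forces $N_G(2)=\{1,3,6\}=N_G(v)$.

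Better still, avoid the case split altogether. The set $A\cup\{2\}$ is independent, and all its vertices have neighbourhood $\{1,3,6\}$. Its complement $\{1,3,4,5,6,7,8\}$ has only $7$ vertices. So $|V(G)|\ge 9$ forces at least two branch sets to be singletons inside $A\cup\{2\}$, and your key step finishes the proof.
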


	\begin{lem}\label{lemma3.5}
	The only $3$-connected graphs that are $V_8$-containing and $W_6$-minor-free are minors of one of the following graphs: $V_8+13+16+46$, $V_8+13+14+24+27$, $V_8+13+14+25+27$, $V_8+13+14+36+46$, $V_8+13+14+36+47$, $V_8+13+14+38+46$, $V_8+13+14+58+68$, $V_8+14+16+27+47$, $V_8+13+14+24+25+58$, $V_8+13+14+27+28+47$
	or $V^i_8+13+16+36$ for some integer $i\geq 1$.
	\end{lem}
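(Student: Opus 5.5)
The plan is to split on the order of $G$ and combine Lemma~\ref{lem3.2} with Lemma~\ref{lem3.3}. Let $G$ be a $3$-connected, $V_8$-containing, $W_6$-minor-free graph. Since $G$ contains a $V_8$ minor, $|V(G)|\geq 8$. Two cases arise.

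\textbf{Case $|V(G)|>8$.} Lemma~\ref{lem3.3} applies directly, so $G$ is a minor of $V^i_8+13+16+36$ for some $i\geq 1$, which is one of the listed graphs.

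\textbf{Case $|V(G)|=8$.} By Lemma~\ref{lem3.2}, $G$ is one of the $49$ graphs of Table~\ref{table1}. Each of these is $V_8$ plus a set $S$ of extra edges, taken up to the symmetry of $V_8$. Since deleting edges is a minor operation, it suffices to show that every graph in Table~\ref{table1} is a subgraph, up to an automorphism of $V_8$, of some maximal element of the table. By maximal I mean a graph to which no further edge can be added while staying in the table. The ten order-$8$ graphs in the statement are exactly the candidates for these maxima:
\begin{itemize}
\item the two graphs with $5$ added edges;
\item the $4$-edge graphs not contained in them;
\item the $3$-edge graph $V_8+13+16+46$, which should be a maximal element with only three extra edges.
\end{itemize}
The graph $V_8+13+16+36$ need not be listed separately, since it is a subgraph of $V^1_8+13+16+36$: delete the cubic vertex adjacent to $\{1,3,6\}$.

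The verification is a poset computation on edge sets modulo $\mathrm{Aut}(V_8)$, the dihedral group of order $16$ acting on the labels $1,\dots,8$. For each graph $V_8+S$ in Table~\ref{table1}, I would exhibit either an explicit table graph $V_8+S'$ with $|S'|=|S|+1$ and $S\subset\sigma(S')$ for some $\sigma\in\mathrm{Aut}(V_8)$, or note that $V_8+S$ is itself in the list. Inducting downward from $5$ extra edges to $0$ then finishes the argument.

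\textbf{Examples of the chain argument.}
\begin{itemize}
\item $V_8+13+14+24+58$ is contained in $V_8+13+14+24+25+58$.
\item $V_8+13+14+27+47$ is contained in $V_8+13+14+27+28+47$.
\item $V_8+13+14+38+47$ requires a symmetry: a reflection must carry it into a listed $4$- or $5$-edge graph.
\end{itemize}

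\textbf{Main obstacle.} The hard part is the bookkeeping of symmetry. A graph such as $V_8+13+14+28+47$ or $V_8+14+16+36$ may fail to be a literal subgraph of any listed graph with the given labels, but may embed after a rotation $i\mapsto i+k$ or a reflection $i\mapsto -i$ mod $8$. Recall that the rim of $V_8$ is the $8$-cycle and the chords join $i$ to $i+4$.

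To handle this, I would first normalize each edge set by these symmetries. An added edge is either a distance-$2$ rim pair like $13$ or a distance-$3$ pair like $14$. I would then check containments using these canonical forms, falling back on direct computer verification, the same exhaustive procedure as in Lemma~\ref{lem3.2}, for any case not settled by hand.
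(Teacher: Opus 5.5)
Your overall route matches the paper's, which gives no separate argument for this lemma: split on $|V(G)|$, apply Lemma~\ref{lem3.3} above order $8$ and Table~\ref{table1} at order $8$, then reduce the table to its maximal members. The gap is in how you propose to find those maximal members. Table~\ref{table1} lists graphs only up to graph isomorphism, and $V_8+S$ can contain several $V_8$ subgraphs. So a containment between table graphs need not be induced by an automorphism of the fixed $V_8$ frame. Your dichotomy says that either $S\subseteq\sigma(S')$ for some larger table entry $S'$ and some $\sigma\in\mathrm{Aut}(V_8)$, or $V_8+S$ is in the list. This fails for $V_8+13+47+58$. Its eight dihedral images are $\{13,47,58\}$, $\{16,24,58\}$, $\{16,27,35\}$, $\{27,38,46\}$, $\{14,38,57\}$, $\{14,25,68\}$, $\{17,25,36\}$, $\{28,36,47\}$. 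None of them lies in any of the sixteen $4$- or $5$-edge entries of the table. The graph is also neither listed nor equal to $V_8+13+16+36$. Your computation would therefore output an extra maximal order-$8$ graph, contradicting the statement. The underlying reason is that $V_8+13+14+47+58$ is $W_6$-minor-free, yet $\{13,14,47,58\}$ is not $\mathrm{Aut}(V_8)$-equivalent to any $4$-edge entry. The table is complete up to isomorphism but not modulo $\mathrm{Aut}(V_8)$, so your ``main obstacle'' paragraph misplaces the difficulty.

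The missing idea is to test containment by genuine subgraph isomorphism, allowing the $V_8$ frame to move. For example, the map $1\mapsto5$, $2\mapsto6$, $3\mapsto2$, $4\mapsto1$, $5\mapsto8$, $6\mapsto7$, $7\mapsto3$, $8\mapsto4$ is an isomorphism $V_8+13+14+47+58\cong V_8+13+14+25+58$. It sends the frame onto the $8$-cycle $5\,6\,2\,1\,8\,7\,3\,4$ with chords $58,67,23,14$, so two of the added edges become chords. Hence $V_8+13+47+58$ is a subgraph of $V_8+13+14+24+25+58$, and with this correction your chain argument goes through. Two smaller points: $V_8+13+14+38+47$ is handled by the rotation $i\mapsto i+1$ (not a reflection), which lands in $\{14,24,25,58\}\subseteq\{13,14,24,25,58\}$. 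Also, $V_8+13+14+28+47$ is already a literal subgraph of $V_8+13+14+27+28+47$.
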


	For the $3$-connected graphs mentioned in Lemmas~\ref{lem3.2} and~\ref{lem3.3}, we determine whether they are internally $4$-connected.

	\begin{lem}\label{lemma3.6}
	The only internally $4$-connected graphs that are $V_8$-containing and $W_6$-minor-free are $V_8+13+24$, $V_8+14+58$, $V_8+13+24+58$, and $V_8+13+14+24+58$.
	\end{lem}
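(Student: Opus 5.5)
By Lemmas~\ref{lem3.2} and~\ref{lem3.3}, every $3$-connected $V_8$-containing $W_6$-minor-free graph is either one of the $49$ graphs of order $8$ in Table~\ref{table1}, or a $3$-connected minor of some $V^i_8+13+16+36$ of order at least $9$ that still contains $V_8$. The plan is to rule out every graph of order at least $9$ first. Then we check the $49$ graphs of Table~\ref{table1} one at a time for internal $4$-connectivity.

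For the graphs of order at least $9$, the proof of Lemma~\ref{lem3.3} shows that they are exactly $V^i_8$ with $i\geq 1$, together with some subset of the edges $\{13,16,36\}$. In each such graph, an added vertex $1'$ is cubic with $N(1')=\{1,3,6\}$. Let $G_1$ be the star on $1'$ and its three neighbours, and let $G_2=G-1'$. Then $(G_1,G_2)$ is a $3$-separation with $V(G_1)\cap V(G_2)=\{1,3,6\}$. Here $G_1\cong K_{1,3}$, so we must use the other side. We therefore pick the separator $\{1,3,6\}$ but let one side be the subgraph spanned by all the added cubic vertices together with part of $V_8$, so that neither side is a star. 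One concrete choice: in $V_8$, vertex $1$ has neighbours $2,8,5$. Take $G_1$ to be the subgraph spanned by $\{1,3,6,1'\}$ together with vertex $2$. This fails unless $N(2)\subseteq\{1,3,6\}$, so it is not the right choice. The cleaner choice is the following. If $i\geq 2$, take $G_1$ spanned by $\{1,3,6,1',1''\}$. This is a $3$-separation and $G_1$ is not $K_{1,3}$. The other side $G_2=G-\{1',1''\}$ contains all of $V_8$, so it is not a star. If $i=1$, we instead use a $3$-separation that already appears in $V_8$ plus the new vertex. One option is to look at the neighbourhood of a vertex $v\in V(V_8)\setminus\{1,3,6\}$ whose degree stays $3$. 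Such a $v$ exists because only vertices $1,3,6$ gain degree. Take the $3$-separation at $N(v)$, with the side being $v$ together with a neighbour whose other neighbours lie in $N(v)$. The simplest route is to exhibit, in $V^1_8+13+16+36$ and in its $V_8$-containing $3$-connected subgraphs, a $3$-separation with both sides non-star, and then check this by hand or computer. Any subgraph on the same vertex set inherits the separation, provided the non-star condition survives.

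For order $8$, we test each of the $49$ graphs in Table~\ref{table1} for internal $4$-connectivity. We enumerate all $3$-vertex cuts $S$ and the components of $G-S$. A graph is internally $4$-connected exactly when every $3$-cut isolates a single vertex, that is, when every $3$-separation has a side isomorphic to $K_{1,3}$. In $V_8$ itself, the neighbourhood of each vertex is a cut, and other $3$-cuts also exist (for instance $\{1,5,x\}$-type separations across the Möbius ladder). Added chords kill some of these cuts and create no new ones. So the check reduces to deciding which added edge sets destroy all non-trivial $3$-separations of $V_8$. The graph $V_8$ is itself internally $4$-connected as a Möbius ladder, but whether each listed graph is internally $4$-connected has to be verified against the definition. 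The expected survivors are $V_8+13+24$, $V_8+14+58$, $V_8+13+24+58$, and $V_8+13+14+24+58$.

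The main obstacle is this exhaustive verification over the $49$ graphs. The cut-enumeration is routine but error-prone, so I would carry it out by computer in the same way the paper produced Table~\ref{table1}. By hand, I would exhibit an explicit non-trivial $3$-separation for each rejected graph and argue via symmetry of $V_8$ (its dihedral action on labels) to group the cases.
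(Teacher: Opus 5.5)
\textbf{Statement as written.} Your overall plan is the paper's own. The paper only says that the candidates from Lemmas 3.2 and 3.3 were checked for internal $4$-connectivity. However, the conclusion you aim for contradicts your own correct remark that $V_8$ is internally $4$-connected.

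\begin{itemize}
\item $V_8$ is the first entry of Table~\ref{table1}.
\item It is $W_6$-minor-free for trivial reasons: any $7$-vertex minor of an $8$-vertex, $12$-edge graph has at most $11$ edges.
\item It is internally $4$-connected. Its only $3$-cuts are the eight neighbourhoods $N(v)=\{v-1,v+1,v+4\}$ (indices mod $8$), and these are independent sets.
\end{itemize}

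So an honest run of your check returns $V_8$ as a fifth survivor. The statement as written, and likewise the list in Theorem~\ref{thm1.1}, omits $V_8$, so it cannot be proved. You should have flagged this inconsistency rather than writing down the ``expected survivors''.

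\textbf{The order-$8$ reduction.} The reduction you propose would also go wrong. Your example $\{1,5,x\}$ is not a cut, since $V_8-\{1,5\}$ is the $2$-connected ladder with rails $2,3,4$ and $6,7,8$. Under your criterion ``every $3$-cut isolates a single vertex'', with chords only killing cuts, all $49$ graphs of Table~\ref{table1} would pass. What you are missing is the other half of the paper's definition (cf.\ Lemma~\ref{lemma4.2}). The separation at $N(v)$ of a cubic vertex $v$ has no $K_{1,3}$ side as soon as $N(v)$ spans an edge: put that edge on $v$'s side.

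Every $3$-cut of $G=V_8+F$ is a $3$-cut of $V_8$, so it equals some $N_{V_8}(v)$ with $v$ untouched by $F$. Hence $G$ is internally $4$-connected if and only if both of the following hold:
\begin{itemize}
\item every added edge $\{a,a+2\}$ has $a+1$ touched by $F$;
\item every added edge $\{a,a+3\}$ has both $a-1$ and $a+4$ touched by $F$.
\end{itemize}
Applied to Table~\ref{table1}, this test gives exactly $V_8$ and the four listed graphs, with no computer needed.

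\textbf{Order at least $9$.} Your argument for $i\ge 2$ is fine. For $i=1$, the choice you discarded is exactly the right one: $N_{V_8}(2)=\{1,3,6\}$, and none of $13,16,36$ touches $2$. So vertex $2$ is a cubic twin of every added vertex. Take the separator $\{1,3,6\}$ with one side on $\{1,2,3,6,1'\}$; both sides then have at least five vertices. This gives a $3$-separation with no $K_{1,3}$ side for every $i\ge 1$ and every subset of $\{13,16,36\}$.
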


\section{Internally $4$-connected $V_8$-minor-free graphs}

	Based on Lemma~\ref{lemma2.2}, we first consider internally $4$-connected $\{V_8,W_6\}$-minor-free graphs. The following lemma gives a characterization of $K^{\triangledown}_{3,3}$-minor-free graphs.

	\begin{figure}[H]
	\input{k3,3.tpx}
	\end{figure}

	\begin{lem}[\cite{ding2013excluding}]\label{lemma4.1}
	Every $3$-connected $K^{\triangledown}_{3,3}$-minor-free graph belongs to one of the following three families: planar graphs, graphs with six or fewer vertices, or graphs with three vertices meeting all edges.
	\end{lem}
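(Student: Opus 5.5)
The plan is to combine a Kuratowski-type reduction with Seymour's splitter theorem (Lemma~\ref{lemma3.1}), following the same generate-and-test strategy used in Section~3 for $V_8$. Let $G$ be $3$-connected and $K^{\triangledown}_{3,3}$-minor-free. If $G$ is planar, or $|V(G)|\le 6$, there is nothing to prove, so assume $G$ is nonplanar with at least seven vertices. A $3$-connected nonplanar graph other than $K_5$ has a $K_{3,3}$ minor, and $|V(G)|\ge 7$ excludes $K_5$. Hence $K_{3,3}\preceq G$, and since $G$ is not a wheel, Lemma~\ref{lemma3.1} gives a chain $K_{3,3}=H_0\preceq H_1\preceq\cdots\preceq H_t=G$ of $3$-connected graphs. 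Each term is obtained from the previous one by adding an edge or splitting a vertex, and each is $K^{\triangledown}_{3,3}$-minor-free because it is a minor of $G$.

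\textbf{Step 1 (order six).} Enumerate the $3$-connected graphs $K_{3,3}+F$, where $F$ is a set of nonedges. By the symmetry of $K_{3,3}$, a nonedge lies inside one colour class, so these graphs are $K^{i,j}_{3,3}$ in the notation of the introduction. I will record which of them contain $K^{\triangledown}_{3,3}$. I expect the surviving graphs to be exactly those in which one colour class $\{a,b,c\}$ still meets every edge, together with a few small exceptional graphs that can only be continued to orders at most six.

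\textbf{Step 2 (the first split).} Determine all vertex splits of the surviving six-vertex graphs that avoid $K^{\triangledown}_{3,3}$. The aim is to show that every such split produces a new vertex whose neighbourhood lies inside a fixed triple $\{a,b,c\}$, and that this triple still meets every edge. Concretely, the seven-vertex survivors should be $K_{4,3}$ plus some edges inside the $3$-side. For every other partition of a neighbourhood, and for every exceptional six-vertex graph, I will exhibit an explicit $K^{\triangledown}_{3,3}$ model. These models serve the same role as the $W_6$ models shown in Figure~\ref{fig3}.

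\textbf{Step 3 (induction, the main obstacle).} Suppose $H$ has at least seven vertices and a $3$-vertex cover $S=\{a,b,c\}$, so that $H-S$ is an independent set of vertices of degree at least three, each adjacent to all of $S$. I must show that every one-step extension of $H$ still has a $3$-vertex cover or contains $K^{\triangledown}_{3,3}$.
\begin{enumerate}[(i)]
\item \emph{Edge additions.} Adding an edge inside $S$ keeps $S$ as a cover. An edge $xy$ with $x,y\notin S$ should give a $K^{\triangledown}_{3,3}$ minor: take the $K_{3,3}$ on $S$ and three vertices of $H-S$, and route the triangle through $xy$ and a fourth outside vertex. This requires $|V(H)-S|\ge 4$, or else a direct check in the seven-vertex case.
\item \emph{Splits of an outside vertex.} Splitting a vertex of $H-S$ needs degree at least four there, so it occurs only in low order; I will reduce these cases to Step 2.
\item \emph{Splits of a vertex of $S$.} This is where I expect the real difficulty. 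Splitting $a$ into $a',a''$ either keeps one of them adjacent to everything, so that a cover persists, or separates the outside neighbours of $a$ into two nontrivial groups. In the latter case I will contract along a path through one group so as to rebuild $K_{3,3}$ with a triangle among three outside vertices. The first thing to try is a local reduction: any bad split already appears among four outside vertices, so it suffices to check configurations of bounded size, namely $K_{4,3}$ plus edges and its splits.
\end{enumerate}

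Once Step 3 holds, induction along the chain shows that $G$ has three vertices meeting all edges, which completes the proof.
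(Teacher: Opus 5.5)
The paper does not prove this lemma; it quotes it from Ding. So the question is whether your argument stands on its own. The skeleton is sound. Wagner's theorem gives a $K_{3,3}$ minor, Lemma~\ref{lemma3.1} gives a chain, and it then suffices to show two things: the first seven-vertex graph in the chain has a $3$-vertex cover, and that cover survives every later step.

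\textbf{The gap: the target graph is wrong.} You never say what $K^{\triangledown}_{3,3}$ is, and the way you use it shows you are aiming at the wrong graph. In Step~1 you expect some six-vertex $K_{3,3}+F$ to contain it. In Step~3 you plan to build ``$K_{3,3}$ with a triangle among three outside vertices''. In fact $K^{\triangledown}_{3,3}$ has seven vertices and eleven edges: the paper uses that it has seven vertices, and six added edges turn it into $K^{4,1}_{4,3}$, which has $17$ edges. Concretely, take $K_{3,3}$ on classes $\{a_1,a_2,a_3\}$, $\{b_1,b_2,b_3\}$, subdivide $a_1b_3$ by a vertex $y$, and add $ya_2$. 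Equivalently, split $a_1$ in $K_{3,3}+a_1a_2$ as $\{b_1,b_2\}\,|\,\{a_2,b_3\}$. Its only triangle, $ya_2b_3$, meets both colour classes.

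This is not cosmetic. With $K_{3,3}$ plus a triangle on one side (six vertices, twelve edges) in its place, the statement is false. Indeed $V_8$ is $3$-connected and nonplanar, no three vertices cover its twelve edges, and every six-vertex minor of it has at most ten edges. The models you promise fail in the same way:
\begin{itemize}
\item In $K_{4,3}+uv$ (your Step~3(i)), the only six-vertex minors with twelve edges come from contracting an edge joining $O\setminus\{u,v\}$ to $S$. Each of these is $K^{2,1}_{3,3}$, not $K_{3,3}$ plus a triangle.
\item The split of $K_{4,3}$ at $a$ that separates $O$ as $2+2$ has eight vertices and thirteen edges. So it has no six-vertex minor with twelve edges.
\end{itemize}
Two smaller points also need correcting. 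Step~1 is vacuous, since no six-vertex graph has a seven-vertex minor; hence every $K^{i,j}_{3,3}$ must enter Step~2. Step~3(ii) never occurs, because a vertex outside a vertex cover $S$ has all its neighbours in $S$, so it is cubic and cannot be split.

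\textbf{The repair.} With the correct graph your plan closes quickly. Write a model as the image of $(a_1,a_2,a_3,b_1,b_2,b_3,y)$.
\begin{itemize}
\item Step~2, degree $4$: if $a_1$ has degree $4$ with $F$-neighbour $a_2$, every admissible partition is $\{a_2,b_i\}\,|\,\{b_j,b_k\}$. Each of these is $K^{\triangledown}_{3,3}$ plus edges.
\item Step~2, degree $5$: if $a_1$ has degree $5$, every partition except $\{a_2,a_3\}\,|\,B$ becomes such a split after deleting one edge. The exceptional partition gives $K_{4,3}$ with cover $\{x,a_2,a_3\}$, where $x$ is the half adjacent to $B$. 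In that graph any $F$-edge inside $B$ joins two outside vertices, so Step~3(i) applies.
\item Step~3(i): with $S=\{a,b,c\}$ and $u,v,w,z\in O$, the map $(b,a,c,w,z,v,u)$ embeds $K^{\triangledown}_{3,3}$ as a subgraph of $K_{4,3}+uv$.
\item Step~3(iii): split $a$ into $a',a''$ with parts $P,Q$. Up to swapping $P$ and $Q$, there are three cases.
 \begin{itemize}
 \item $O\subseteq P$. Then $Q=\{b,c\}$, and $\{a',b,c\}$ is again a cover.
 \item $o_1,o_2\in P$ and $o_3,o_4\in Q$. Contract $a'o_1$ to a vertex $w$; the model is $(a'',b,c,o_3,o_4,o_2,w)$.
 \item $P\cap O=\{o_1\}$. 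Then $P$ contains, say, $b$, and $Q\supseteq\{o_2,o_3,o_4\}$. Contract $a''o_2$ to $w$; the model is $(w,b,c,o_3,o_4,o_1,a')$.
 \end{itemize}
\end{itemize}
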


	\begin{lem}\label{lemma4.2}
	If $v$ is a cubic vertex of an internally $4$-connected graph, then $v$ is not in a triangle.
	\end{lem}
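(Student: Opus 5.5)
Suppose, for contradiction, that $G$ is internally $4$-connected and has a cubic vertex $v$ with $N_G(v)=\{a,b,c\}$ such that $v$ lies in a triangle; without loss of generality $ab\in E(G)$. The plan is to exhibit a $3$-separation of $G$ in which neither side is isomorphic to $K_{1,3}$. This contradicts the definition of internal $4$-connectivity given in the introduction.

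The natural separator is $\{a,c,\dots\}$ obtained by cutting off the edge $ab$ together with $v$. Concretely, I take $S=\{b,c\}$ augmented appropriately, and the cleaner choice is as follows. Let $G_1$ be the subgraph with vertex set $\{v,a,b,c\}$ and edge set $\{va,vb,vc,ab\}$, together with any edges among $a,b,c$ (including $ac$ or $bc$ if present). Let $G_2=G-v$, keeping every edge of $G$ not incident to $v$; the edges among $\{a,b,c\}$ may be placed on either side. Then $V(G_1)\cap V(G_2)=\{a,b,c\}$, and $E(G_1)\cup E(G_2)=E(G)$. We have $V(G_1)-V(G_2)=\{v\}\neq\emptyset$. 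Also $V(G_2)-V(G_1)=V(G)-\{v,a,b,c\}$ is nonempty because internally $4$-connected graphs have at least five vertices. Hence $(G_1,G_2)$ is a $3$-separation.

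It remains to check that neither side is $K_{1,3}$. The side $G_1$ contains the edge $ab$ in addition to the three edges at $v$, so it has a triangle $vab$ and is not isomorphic to $K_{1,3}$. For $G_2$, note that $|V(G_2)|=|V(G)|-1\geq 4$. If $G_2\cong K_{1,3}$, then $|V(G)|=5$ and $G_2$ is a star with three edges, which means $G$ has at most $3+4=7$ edges on five vertices. However, a $3$-connected graph on five vertices has minimum degree at least $3$ and hence at least $\lceil 15/2\rceil=8$ edges, a contradiction. To avoid this counting altogether, place $ab$ in $E(G_2)$ as well, since edges may lie in both sides. Then $G_2$ contains $ab$, both of whose ends lie in the separator. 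In $K_{1,3}$ the three degree-one vertices would have to form the separator, and they are pairwise nonadjacent, so $G_2$ is not $K_{1,3}$.

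The only delicate point is the bookkeeping convention for where the edges inside the separator go; the definition allows them on both sides, which settles the case cleanly. Everything else is immediate.
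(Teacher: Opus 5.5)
Your proof is correct. The paper states this lemma without proof, treating it as immediate from the definition, and your $3$-separation is the evident argument: separator $N_G(v)=\{a,b,c\}$, with the triangle edge $ab$ kept on $v$'s side. Keeping $ab$ on both sides is legitimate here, because the paper's definition of a separation does not require $E(G_1)\cap E(G_2)=\emptyset$.

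One small point concerns your alternative argument. The claim that the three leaves of $K_{1,3}$ must form the separator needs justification: the unique vertex of $G_2$ outside $\{a,b,c\}$ has all its edges in $G_2$, so by $3$-connectivity it has degree at least $3$ there and must be the centre. Your edge count already settles $G_2\not\cong K_{1,3}$ without this. So does the simpler fact that $G_2=G-v$ is $2$-connected while $K_{1,3}$ has a cut vertex.
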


	\begin{lem}\label{lemma4.3}
	$W_6$ is a minor of $L(K_{3,3})$, $AW^+_6$, and $K_{4,4}-3K_2$.
	\end{lem}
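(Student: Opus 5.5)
The plan is to exhibit, for each of the three graphs, an explicit sequence of contractions and deletions ending in $W_6$. Equivalently, we find a \emph{minor model}: a hub branch set $B_0$ and six pairwise disjoint connected branch sets $B_1,\dots,B_6$. These must satisfy that $B_0$ has an edge to every $B_i$ and that $B_i$ has an edge to $B_{i+1}$, with indices taken modulo $6$. Since $W_6$ has $7$ vertices and $12$ edges, and each of the three graphs has $8$ or $9$ vertices, we need only contract one or two edges and then delete surplus edges.

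For $K_{4,4}-3K_2$, take bipartition $\{a_1,\dots,a_4\}$, $\{b_1,\dots,b_4\}$ with the removed matching $a_ib_i$ for $i=1,2,3$. Then $a_4$ is adjacent to every $b_j$ and $b_4$ is adjacent to every $a_j$. We contract the edge $a_4b_4$ to a hub $h$, which is adjacent to all of $a_1,a_2,a_3,b_1,b_2,b_3$. The remaining vertices induce $K_{3,3}$ minus a perfect matching, which is the $6$-cycle $a_1b_2a_3b_1a_2b_3a_1$. This yields $W_6$ directly.

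For $AW^+_6$, we have a cycle $c_1\cdots c_6$, with $u$ adjacent to $c_1,c_3,c_5$, $v$ adjacent to $c_2,c_4,c_6$, and the edge $uv$. Contracting $uv$ gives a hub adjacent to all six $c_i$, so the result is $DW$-like and again $W_6$ appears immediately.

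For $L(K_{3,3})$, which is the $3\times 3$ rook's graph on cells $(i,j)$, we choose the hub $B_0=\{(1,1),(2,2)\}$. This set is not connected, so instead we take $B_0=\{(1,1),(1,2),(2,2)\}$, connected via row $1$ and column $2$. Its neighbours are the rest of row $1$, columns $1$ and $2$, and row $2$, which is everything except $(3,3)$. The remaining cells $(1,3),(2,1),(2,3),(3,1),(3,2),(3,3)$ should form a cycle: $(1,3)-(2,3)-(3,3)-(3,2)-(3,1)-(2,1)-(2,3)$. That would repeat $(2,3)$, so we adjust. Using the cycle $(1,3),(3,3),(3,2),(3,1),(2,1),(2,3),(1,3)$, every consecutive pair shares a row or column. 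The hub then needs an edge to $(3,3)$, which it lacks. The fix is to choose the hub path so that it dominates all six outside cells.

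The main obstacle is this case, and the first thing we would try is a $2$-cell hub plus the $6$-cycle on the other $6$ cells after deleting one cell. Alternatively, we would contract a perfect-ish pair and check adjacency by hand. If needed, we would fall back to a small case check of hub choices using the transitive symmetry of $L(K_{3,3})$.
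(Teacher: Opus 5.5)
Your arguments for $K_{4,4}-3K_2$ and for $AW^+_6$ are correct and match the paper, which contracts a single edge in each of these graphs. For $K_{4,4}-3K_2$ you contract $a_4b_4$, and the other six vertices induce $K_{3,3}$ minus a perfect matching, which is the $6$-cycle you list. For $AW^+_6$ you contract $uv$.

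The gap is $L(K_{3,3})$: you never exhibit a minor model. The L-shaped hub $\{(1,1),(1,2),(2,2)\}$ fails, as you observe, and the fallbacks are only announced. Moreover, the first fallback you name cannot succeed. In any $W_6$ model the six rim branch sets use at least six of the nine cells, so the hub has at most three cells. The hub also needs at least six neighbouring cells, one in each rim branch set. Count what each candidate hub achieves:
\begin{itemize}
\item one cell has only $4$ neighbours;
\item two adjacent cells such as $(1,1),(1,2)$ have only the $5$ outside neighbours $(1,3),(2,1),(3,1),(2,2),(3,2)$, since they share the third cell of their line;
\item an L-shaped triple $\{(a,b),(a,c),(d,c)\}$ also has only $5$, missing the cell in the third row and third column.
\end{itemize}
So a two-cell hub, with or without deleting a cell, is impossible, and so is every L-shaped hub. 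Falling back on a search over hub choices is therefore unnecessary: the hub is forced.

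The missing idea is that the hub must be an entire line. Contract the two edges $(1,1)(1,2)$ and $(1,2)(1,3)$ of row $1$; this is what the paper's contraction of $\{45,56\}$ amounts to. The new vertex is adjacent to all six cells of rows $2$ and $3$. These cells induce the prism $K_3\square K_2$, which has the Hamiltonian cycle $(2,1)(2,2)(2,3)(3,3)(3,2)(3,1)(2,1)$. Deleting the three remaining rim edges $(2,1)(2,3)$, $(3,1)(3,3)$, $(2,2)(3,2)$ yields $W_6$. With this replacing your exploratory search, the proof is complete.
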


	\begin{proof}
	For the three graphs listed above, a $W_6$ minor can be obtained by contracting $\{45,56\}$, $78$, and $27$, respectively.
	\end{proof}

	\begin{figure}[H]
	\input{contain.tpx}
	\end{figure}

	\begin{lem}\label{lemma4.4}
	
	(i) The only internally $4$-connected nonplanar graph of order $5$ is $K_5$.
	
	(ii) The only internally $4$-connected $W_6$-minor-free nonplanar graphs of order $6$ are $K_{3,3}$, $DW^+_4$, $K_6\setminus e$, and $K_6$.
	
	(iii) The only internally $4$-connected $W_6$-minor-free nonplanar graphs of order $7$ are $\Gamma_1$, $K^{2,0}_{4,3}$, $C^2_7$, $K^{3,0}_{4,3}$, $K^{3',0}_{4,3}$, $C^2_7+e$, $K^{2',1}_{4,3}$, $K^{3',1}_{4,3}$, $K^{4,0}_{4,3}$, $\Gamma_2$, and $K^{4,1}_{4,3}$.
	\end{lem}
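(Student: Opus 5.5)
We treat the three parts in order of increasing order, using only elementary counting together with the tools already available: Lemma~\ref{lemma4.2} (cubic vertices lie in no triangle), Lemma~\ref{lemma4.3} (known $W_6$-containing graphs), and the fact that every nonplanar graph has a $K_5$ or $K_{3,3}$ minor.

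\textbf{Part (i).} Let $G$ be internally $4$-connected of order $5$. Then every vertex has degree at least $3$. We first exclude cubic vertices. If $v$ is cubic, it has exactly one non-neighbour $w$, and its three neighbours lie among the other three vertices. By Lemma~\ref{lemma4.2} these three neighbours are pairwise nonadjacent, so they have degree at most $2$ (adjacent only to $v$ and $w$), a contradiction. Hence $G$ is $4$-regular on five vertices, so $G=K_5$. Finally $K_5$ is internally $4$-connected, since it has no $3$-separation.

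\textbf{Part (ii).} Here $G$ has six vertices, and we split by minimum degree.

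\emph{Case $\delta(G)=3$.} A cubic vertex $v$ has pairwise nonadjacent neighbours $a,b,c$, by Lemma~\ref{lemma4.2}. Each of $a,b,c$ then needs at least two neighbours among the remaining two vertices $x,y$, so each of $a,b,c$ is adjacent to both $x$ and $y$. This contains $K_{3,3}$ with parts $\{v,x,y\}$ and $\{a,b,c\}$. The only edge that can still be added is $xy$, but adding it puts $v$... rather, it creates triangles at the cubic vertices $a,b,c$, violating Lemma~\ref{lemma4.2} unless they get higher degree, which is impossible here. So $G=K_{3,3}$.

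\emph{Case $\delta(G)\ge 4$.} The complement of $G$ has maximum degree at most $1$, so it is a matching of size $0,1,2$ or $3$. This gives $K_6$, $K_6\setminus e$, $K_6$ minus two disjoint edges, or the octahedron $K_{2,2,2}$. The octahedron is planar, and $K_6$ minus two disjoint edges is $DW^+_4$.

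In both cases $W_6$ has seven vertices, so every six-vertex graph is automatically $W_6$-minor-free. It remains to confirm that each listed graph is nonplanar (each contains $K_{3,3}$ or $K_5$) and internally $4$-connected (check the $3$-separations directly).

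\textbf{Part (iii).} This is the main obstacle, since order $7$ allows many more configurations. I would run a structured enumeration by the number of cubic vertices.

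\emph{At least one cubic vertex.} Take a cubic $v$ with independent neighbourhood $\{a,b,c\}$ and the three remaining vertices $\{x,y,z\}$. Internal $4$-connectivity forces strong attachment of $a,b,c$ to $\{x,y,z\}$. I expect this branch to yield the $K^{i,j}_{4,3}$ family, with $X_1=\{v,x,y,z\}$ and $X_2=\{a,b,c\}$, together with edges added inside $X_1$ (but not at $v$) and inside $X_2$. The edges inside $X_2$ are limited by Lemma~\ref{lemma4.2} and by the $W_6$ test. I would show that $W_6$ appears once too many edges are added: more than $4$ edges inside $X_1$, or more than one inside $X_2$. For this I would exhibit the contractions explicitly, contracting an edge so as to produce a hub of degree $6$ over a $6$-cycle, i.e.\ a $W_6$ model. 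Lemma~\ref{lemma4.3} ($K_{4,4}-3K_2$, $AW^+_6$) already supplies some of these models.

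\emph{Minimum degree at least $4$.} Here the complement has maximum degree at most $2$, so it is a disjoint union of paths and cycles on seven vertices. I would enumerate these complements. A graph is $W_6$-minor-free here exactly when no vertex $h$ has a Hamiltonian cycle through $N(h)$ after contracting one edge. This essentially forces high-degree vertices to be absent, so that $C^2_7$, $C^2_7+e$, $\Gamma_1$ and $\Gamma_2$ survive while denser graphs contain $W_6$, obtained by contracting one edge into a degree-$6$ hub. Planar cases are discarded along the way.

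Finally, I would check internal $4$-connectivity and nonplanarity of each survivor, and remove isomorphic duplicates such as the pairs $K^{3,0}$ versus $K^{3',0}$, which are distinguished by whether the added edges form a path or a star.

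This case analysis is the step I expect to need care, possibly computer-assisted like Lemma~\ref{lem3.2}.
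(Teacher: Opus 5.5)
Parts (i) and (ii) are correct. For (ii), your split by minimum degree is a tidier justification than the paper's bare assertion that every $3$-connected nonplanar $6$-vertex graph is $K_{3,3}$ plus edges: a cubic vertex forces exactly $K_{3,3}$, and $\delta\ge 4$ makes the complement a matching.

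The gap is in part (iii), where the plan rests on a wrong $W_6$ test. Since $G$ and $W_6$ both have seven vertices, a $W_6$ minor of $G$ can use no contraction and no vertex deletion. So $W_6\preceq G$ if and only if $G$ has a vertex $h$ of degree $6$ with $G-h$ Hamiltonian. Contracting an edge leaves six vertices, so ``contracting one edge into a degree-$6$ hub'' never yields a $W_6$ model. Lemma~\ref{lemma4.3} is also irrelevant here, because $AW^+_6$, $K_{4,4}-3K_2$ and $L(K_{3,3})$ have more than seven vertices. Taken literally, your criterion is never satisfied by a $7$-vertex graph, so it would declare even $K_7$ $W_6$-minor-free. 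The equal-order observation is exactly what the paper exploits (``this minor is obtained by deleting edges only'').

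The structural predictions for your two branches are also false, so the enumeration has not really been set up.

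\emph{Cubic branch.} A neighbour of the cubic vertex $v$ only needs two neighbours in $\{x,y,z\}$, so the $K^{i,j}_{4,3}$ shape does not follow. Take $K_{3,3}$ with parts $\{p_1,p_2,x\}$ and $\{q_1,q_2,y\}$, and add a vertex $w$ adjacent to $p_1,p_2,q_1,q_2$. This graph is internally $4$-connected and nonplanar, and it has maximum degree $4$, hence is $W_6$-minor-free. It has cubic vertices $x,y$, and $y\in N(x)$ is not adjacent to $w$. It must be $\Gamma_1$, the only $13$-edge graph on the list, which you placed in the $\delta\ge 4$ branch.

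\emph{Other branch.} With $v$ cubic, Lemma~\ref{lemma4.2} forbids every edge inside $N(v)$, and at most three edges can lie in $X_1$ away from $v$. Hence $K^{3',0}_{4,3}$, $K^{2',1}_{4,3}$, $K^{3',1}_{4,3}$, $K^{4,0}_{4,3}$ and $K^{4,1}_{4,3}$, all with $\delta=4$, come out of your $\delta\ge4$ branch, not the cubic one.

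\emph{Star versus path.} The distinction between $K^{3,0}_{4,3}$ and $K^{3',0}_{4,3}$ cannot be star versus path. Three edges forming a star in $X_1$ give a degree-$6$ vertex whose deletion leaves a supergraph of $K_{3,3}$, which is Hamiltonian, so that graph contains $W_6$.

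To repair (iii), you can carry out the enumeration honestly with the correct test. In the cubic branch, each of $a,b,c$ has at least two neighbours in $\{x,y,z\}$. In the $\delta\ge 4$ branch, the complement has maximum degree at most $2$, and internal $4$-connectivity means $4$-connectivity.

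Alternatively, follow the paper. By Lemma~\ref{lemma4.1}, a nonplanar $7$-vertex graph either has three vertices meeting all edges, or it contains $K^{\triangledown}_{3,3}$. In the first case it is $K_{4,3}$ plus edges in the $3$-side, never internally $4$-connected. In the second case $K^{\triangledown}_{3,3}$ is necessarily a spanning subgraph, which reduces (iii) to a finite check of edge additions to one fixed graph.
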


	\begin{figure}[H]
	\input{MJ1.tpx}
	\label{fig7}
	\end{figure}

	\begin{figure}[H]
	\input{k4,3.tpx}
	\label{fig8}
	\end{figure}

	\begin{proof}
	(i) It is trivial.
	
	(ii) We can construct all $3$-connected nonplanar graphs of order $6$ by adding edges to $K_{3,3}$. Note that $K^{i,j}_{3,3}$ is isomorphic to $K^{j,i}_{3,3}$, where $0 \leq i \leq 3$ and $0 \leq j \leq 3$.
	Among these graphs, only $K^{0,0}_{3,3}$, $K^{2,2}_{3,3}$, $K^{3,2}_{3,3}$, and $K^{3,3}_{3,3}$ are internally $4$-connected by Lemma~\ref{lemma4.2}, which are isomorphic to $K_{3,3}$, $DW^+_4$, $K_6\setminus e$, and $K_6$, respectively.
	
	(iii) If $G$ is $K^{\triangledown}_{3,3}$-minor-free and is nonplanar of order $7$, by the preceding characterization, $G$ has three vertices meeting all edges. It follows that $G$ is one of $K_{4,3}$, $K^{0,1}_{4,3}$, $K^{0,2}_{4,3}$, and $K^{0,3}_{4,3}$. However, none of these graphs is internally $4$-connected by Lemma~\ref{lemma4.2}. Since both $G$ and $K^{\triangledown}_{3,3}$ have seven vertices, this minor is obtained by deleting edges only. Hence $G$ can be obtained from $K^{\triangledown}_{3,3}$ by adding edges. The result is shown in Table~\ref{table3}.
	\end{proof}

	\begin{table}[H]
	\centering
	\caption{Edge additions of $K^{\triangledown}_{3,3}$}
	\label{table3}
	\begin{tabular}{|>{\centering\arraybackslash}m{4cm}|>{\centering\arraybackslash}m{7cm}|>{\centering\arraybackslash}m{2cm}|}
		\hline
		\textbf{Number of Additional Edges} & \textbf{$W_6$-Minor-Free Graphs} & \textbf{Rename} \\ \hline
		$0,1$ & - & - \\ \cline{1-3}
		$2$ & $K^{\triangledown}_{3,3}+15+26$ & $\Gamma_1$ \\ \cline{1-3}
		\multirow{2}{*}{$3$} & $K^{\triangledown}_{3,3}+15+16+23$ & $K^{2,0}_{4,3}$ \\ \cline{2-3}
		& $K^{\triangledown}_{3,3}+16+27+35$ & $C^2_7$ \\ \cline{1-3}
		\multirow{4}{*}{$4$} & $K^{\triangledown}_{3,3}+15+16+23+26$ & $K^{3,0}_{4,3}$ \\ \cline{2-3}
		& $K^{\triangledown}_{3,3}+15+16+23+27$ & $K^{3',0}_{4,3}$ \\ \cline{2-3}
		& $K^{\triangledown}_{3,3}+15+16+27+34$ & $C^2_7+e$ \\ \cline{2-3}
		& $K^{\triangledown}_{3,3}+15+23+34+67$ & $K^{2',1}_{4,3}$ \\ \cline{1-3}
		\multirow{3}{*}{$5$} & $K^{\triangledown}_{3,3}+15+16+23+24+34$ & $K^{3',1}_{4,3}$ \\ \cline{2-3}
		& $K^{\triangledown}_{3,3}+15+16+23+27+67$ & $K^{4,0}_{4,3}$ \\ \cline{2-3}
		& $K^{\triangledown}_{3,3}+15+16+27+34+35$ & $\Gamma_2$ \\ \cline{1-3}
		$6$ & $K^{\triangledown}_{3,3}+15+16+23+27+34+67$ & $K^{4,1}_{4,3}$ \\ \cline{1-3}
		$\geq 7$ & - & - \\ \cline{1-3}
	\end{tabular}
	\end{table}

	\begin{lem}\label{lemma4.5}
	If all edges of $G$ are met by exactly four vertices, the only internally $4$-connected $W_6$-minor-free graph is $\mathrm{cube}$.
	\end{lem}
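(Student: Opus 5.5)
Let $S=\{a,b,c,d\}$ be a set of four vertices meeting all edges of $G$, and let $I=V(G)\setminus S$, which is independent. In the context of Lemma~\ref{lemma2.2} we may assume $G$ is not planar with few vertices, but the argument below does not need this. We first use internal $4$-connectivity. Every vertex of $I$ has degree $3$ or $4$ and all its neighbours lie in $S$. If two vertices $x,y\in I$ had the same neighbourhood of size $3$, then $\{x,y\}$ together with that neighbourhood gives a $3$-separation in which neither side is $K_{1,3}$, provided $|V(G)|\ge 6$. Hence cubic vertices of $I$ have pairwise distinct neighbourhoods, and the same argument with degree-$4$ vertices (whose neighbourhood is all of $S$) will be handled by the minor analysis below. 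By Lemma~\ref{lemma4.2}, a cubic vertex of $I$ with neighbourhood $T\subset S$ forces $T$ to be independent in $G[S]$. So $G$ is determined by the graph $G[S]$, a multiset of neighbourhoods from the four $3$-subsets of $S$ (each used at most once), and some number $m$ of vertices adjacent to all of $S$.

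The core of the proof is a bound on $|I|$ via explicit $W_6$ models. If $m\ge 3$, the vertices of $S$ together with three full vertices contain $K_{4,3}$; with one more vertex of $I$ attached to three vertices of $S$, or with $m\ge 4$, I would exhibit a $W_6$. For example, $K_{4,4}-3K_2\preceq K_{4,4}$ contains $W_6$ by Lemma~\ref{lemma4.3}, and $K_{4,4}-3K_2$ appears when $|I|\ge4$ with the right adjacency pattern. The claim to prove is that any four vertices of $I$ with distinct neighbourhoods (or with full neighbourhoods) yield a subgraph containing $K_{4,4}-3K_2$ or a graph contracting to $W_6$: the hub is a vertex of $S$, and the rim alternates between the other three vertices of $S$ and three vertices of $I$. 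This needs each of the three rim vertices of $S$ to have two $I$-neighbours among the chosen three, which I would check by a short case analysis on neighbourhood types. This reduces to $|I|\le 4$, so $|V(G)|\le 8$.

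The remaining cases with $|V(G)|\le 8$ are finite. For $|I|=4$ with all cubic, distinct neighbourhoods (all four $3$-subsets of $S$), $S$ must be independent by Lemma~\ref{lemma4.2}, and the resulting graph is exactly the cube (bipartite, $3$-regular on $8$ vertices). Here one checks that adding any edge inside $S$ is forbidden and that the cube itself is $W_6$-minor-free: it has $12$ edges, equal to $|E(W_6)|$, but is $3$-regular with $8$ vertices, so no contraction to $7$ vertices keeps $12$ edges. The cases with $|I|\le 3$ give order at most $7$, and these are either absorbed into Lemma~\ref{lemma4.4}, where the graphs with four vertices covering all edges are precisely the $K^{i,j}_{4,3}$-type entries, or, if $G$ is planar or has a vertex cover of size $3$, excluded by hypothesis. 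I will interpret the lemma as concerning order at least $8$, consistent with how Lemma~\ref{lemma2.2} is used.

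The main obstacle is the uniform $W_6$ construction when $|I|\ge 5$, or when $|I|=4$ and some vertices have degree $4$ or $S$ has edges. I would first try to always find three vertices of $I$ and three vertices of $S$ forming a $6$-cycle, with the fourth vertex of $S$ as hub adjacent to three rim vertices of $S$. The remaining rim vertices of $I$ would be linked to the hub through additional $I$-vertices adjacent to the hub, contracted onto them. Where this fails, I would fall back on a computer-style enumeration like that used in Section~3.
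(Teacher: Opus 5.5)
Your outline matches the paper's proof: distinct neighbourhoods for the cubic vertices of $I$ via $3$-separations, Lemma~\ref{lemma4.3} to rule out degree-$4$ vertices, and Lemma~\ref{lemma4.2} to make $S$ independent. Reading the lemma as concerning order at least $8$ is exactly the paper's implicit assumption $|Y|\ge 4$. However, the step that carries the proof is not established.

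Your stated claim, that any four vertices of $I$ with distinct neighbourhoods yield $K_{4,4}-3K_2$ or a $W_6$, is false. Four cubic vertices realizing the four $3$-subsets of $S$ give the cube, which is $W_6$-minor-free. The cases you defer as the ``main obstacle'' ($|I|\ge 5$, or $|I|=4$ with a degree-$4$ vertex or with edges in $S$) are left to an unspecified model or a computer search. The model you sketch is also mis-described. With hub $s_0\in S$ and rim $s_1i_1s_2i_2s_3i_3$, it is the rim vertices $s_1,s_2,s_3$ that must be joined to the hub. When $S$ is independent, that requires contracting a full vertex of $I$ onto $s_0$.

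The missing idea is a short counting observation:
\begin{itemize}
\item Each cubic vertex of $I$ is nonadjacent to exactly one vertex of $S$.
\item Distinct cubic vertices miss distinct vertices of $S$, because their neighbourhoods are distinct.
\item Hence, for any four vertices of $I$, the nonedges between them and $S$ form a matching. If one of the four has degree $4$, this matching has at most three edges.
\item So the bipartite graph between these four vertices and $S$ is $K_{4,4}$ minus at most three independent edges. It contains $K_{4,4}-3K_2$ as a subgraph, and $W_6\preceq G$ by Lemma~\ref{lemma4.3}. Edges inside $S$ are irrelevant, since you only need a subgraph.
\end{itemize}
Because $|I|\ge 4$, this forces every vertex of $I$ to be cubic. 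Then $|I|=4$ and all four $3$-subsets of $S$ occur as neighbourhoods. Every pair of $S$ lies in some such neighbourhood, so Lemma~\ref{lemma4.2} makes $S$ independent and $G$ is the cube. This also disposes of $|I|\ge 5$ (which forces a degree-$4$ vertex) and of the ``$S$ has edges'' case, without any enumeration.
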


	\begin{proof}
	Suppose $G$ has a set $X$ of four vertices that meet all edges of $G$.
	Let $Y=V(G)-X$ and let $Y_3$, $Y_4$ consist of vertices of $Y$ of degree $3$ and $4$, respectively. We claim that no two vertices in $Y_3$ have the same neighbors. Otherwise, these three neighbors form a $3$-separation, which contradicts the definition of an internally $4$-connected graph. By Lemma~\ref{lemma4.3}, deleting three nonincident edges from $K_{4,4}$ results in a graph $K_{4,4}-3K_2$ that contains $W_6$. Since $|Y| \geq 4$, we have $|Y_4|=0$. By Lemma~\ref{lemma4.2}, it follows that $G$ is isomorphic to the cube.
	\end{proof}

	Now, we consider internally $4$-connected $W_6$-minor-free planar graphs. Gubser characterized $3$-connected planar $W_6$-minor-free graphs. However, it is not straightforward to determine from his characterization which of these graphs are internally $4$-connected. Using this result as a foundation, we can derive an upper bound.

	\begin{lem}[\cite{gubser1993planar}]\label{lemma4.6}
	If $G$ is a $3$-connected $W_6$-minor-free planar graph, then the number of edges of $G$ is less than $18$, and the order of $G$ is less than $12$.
	\end{lem}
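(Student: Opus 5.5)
Lemma~\ref{lemma4.6} is cited from Gubser~\cite{gubser1993planar}, so the paper takes it as given. If I had to justify it independently, I would proceed as follows.

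I would work with planar duality. $W_6$ is self-dual, so a $3$-connected planar graph $G$ has a $W_6$ minor if and only if its planar dual $G^*$ does. Since $|E(G)|=|E(G^*)|$ and $|V(G^*)|=|E(G)|-|V(G)|+2$, a bound on edges bounds both orders. The edge bound is therefore the primary goal, and I would take the order bound as a consequence of it.

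For the order, any $3$-connected graph has $|E|\ge 3|V|/2$. With at most $17$ edges this gives $|V|\le 11$, which is the claimed order bound.

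For the edge bound I would use Lemma~\ref{lemma3.1} (Seymour's splitter theorem) together with the fact that every $3$-connected graph other than a wheel has a $3$-connected minor obtained by deleting or contracting one edge (Tutte's wheels theorem). Take a $3$-connected planar $W_6$-minor-free graph $G$. If $G$ is a wheel, it is $W_n$ with $n\le 5$, which has at most $10$ edges. Otherwise I would generate all $3$-connected planar $W_6$-minor-free graphs by a chain of single-edge additions and vertex splits starting from $W_3$, $W_4$ or $W_5$. At each stage I would discard any graph that is nonplanar or contains $W_6$, because both properties pass to all larger graphs in the chain.

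The main step is to show that this growth halts before reaching $18$ edges, which I would check exhaustively by the same computational procedure used in Section~3. The hard part is organising the argument to rule out long chains, so that the enumeration is finite from the outset. A bound of the kind I would want is this: a $3$-connected planar graph with a vertex of degree at least $6$ lying on a suitable cycle through its neighbours contains $W_6$. Together with degree counting via Euler's formula, this should bound the size of the graphs. Concretely, I would aim to show that a $3$-connected planar graph with maximum degree at least $6$ has a $W_6$ minor, by using the cycle of faces around a high-degree vertex. Then both $G$ and $G^*$ have maximum degree at most $5$, which bounds the face sizes. Combined with Euler's formula this gives a finite search, and the enumeration then confirms $|E|\le 17$.
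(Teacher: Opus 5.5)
Your reduction of the order bound to the edge bound is fine: $2|E|\ge 3|V|$, so $|E|\le 17$ gives $|V|\le 11$. So is your observation that a vertex $v$ of degree $d\ge 6$ in a $3$-connected planar graph gives $W_d\succeq W_6$, because the face of $G-v$ containing $v$ is bounded by a cycle through all of $N(v)$.

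The step that fails is the next one. Bounding the maximum degree of $G$ and of $G^*$ by $5$ and then invoking Euler's formula does not make the search finite. Euler's formula only says $\sum_v(4-\deg v)+\sum_f(4-|f|)=8$. Vertices of degree $4$ and quadrilateral faces contribute zero to this sum, so they can be repeated without limit. Concretely, take $k$ disjoint $4$-cycles and join corresponding vertices of consecutive ones, giving $C_4\,\square\,P_k$. For every $k\ge 2$ this graph is $3$-connected and planar; it is the graph of a convex "tower" of square frusta. All its degrees lie in $\{3,4\}$ and all its faces are quadrilaterals, so it and its dual both have maximum degree $4$, yet it has $4k$ vertices. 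The constraints you derive therefore leave infinitely many candidates, and this part of the argument yields no bound.

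You do not actually need a priori finiteness. Use the chain given by Tutte's wheels theorem in its matroid form, where $G/e$ is required to be simple. Alternatively, use Seymour's splitter theorem started at the largest wheel minor $W_k$ of $G$. In either chain every step adds exactly one edge. Every intermediate graph is a minor of $G$, hence $3$-connected, planar and $W_6$-minor-free. Since $k\le 5$, the chain starts at $2k\le 10$ edges.

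Consequently, if the level-by-level generation produces no $3$-connected planar $W_6$-minor-free graph with exactly $m$ edges for some $m\ge 11$, then none has $m$ or more edges. The statement is thus equivalent to level $18$ being empty, and the enumeration certifies itself as soon as it reaches an empty level.

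With this replacing the Euler-formula step, your plan is a valid computational route. The paper gives no argument here and simply cites Gubser, so your version rests entirely on actually carrying out that enumeration.
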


	We use Plantri~\cite{397541} to generate all $3$-connected planar graphs satisfying the bounds in Lemma~\ref{lemma4.6}. Then, we determine which of them are internally $4$-connected and $W_6$-minor-free.

	\begin{lem}\label{lemma4.7}
	The only internally $4$-connected $W_6$-minor-free planar graphs are $C^2_6$, $DW_5$, $C^2_8$, $\mathrm{cube}$, $N$, $Q$, and $AW_8$.
	\end{lem}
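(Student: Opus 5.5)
The argument is a finite computation, with Gubser's bounds in Lemma~\ref{lemma4.6} supplying the finiteness. By Lemma~\ref{lemma4.6}, every $3$-connected planar $W_6$-minor-free graph has at most $11$ vertices and at most $17$ edges. So I would use Plantri to enumerate all $3$-connected planar graphs with $n\le 11$ vertices and $m\le 17$ edges. Since a $3$-connected graph has minimum degree at least $3$, we also have $m\ge \lceil 3n/2\rceil$, so only pairs $(n,m)$ with $\lceil 3n/2\rceil\le m\le 17$ need to be generated. Every internally $4$-connected graph has at least five vertices, and $K_5$ is nonplanar, so the range is effectively $6\le n\le 11$. Euler's bound $m\le 3n-6$ further removes the small cases, for example $n=6$ allows at most $12$ edges.

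Next, each generated graph goes through two filters, in whichever order is cheaper.

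\textbf{Internal $4$-connectivity.} I would enumerate all $3$-subsets $S\subseteq V(G)$ and compute the components of $G-S$. The graph passes if, whenever $G-S$ is disconnected, one side of the resulting $3$-separation is a $K_{1,3}$: the only small side is a single vertex adjacent to all of $S$ with no edges inside $S$ assigned to it. Lemma~\ref{lemma4.2} gives a quick pre-filter: reject any graph with a cubic vertex lying in a triangle.

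\textbf{$W_6$-minor-freeness.} I would test for a $W_6$ minor by searching for a branch-set model: a hub branch set and six rim branch sets forming a cycle, all connected and pairwise adjacent as required. Two pruning steps help here. A $W_6$ minor needs a vertex of degree at least $6$ after contractions. Also, any graph with at most $11$ vertices and $17$ edges admits few contraction sequences down to $7$ vertices, so an exhaustive search over contractions to $7$ vertices, followed by a check for a $W_6$ subgraph, is feasible.

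The survivors should be exactly $C^2_6$, $DW_5$, $C^2_8$, the cube, $N$, $Q$ and $AW_8$. As a sanity check, I would verify by hand that each listed graph is planar and internally $4$-connected. For $W_6$-freeness, the small orders work directly, and the cube has only $12$ edges with maximum degree $3$, which makes a hub with six rim neighbours hard to form. I would also confirm that each of these graphs appears among Gubser's $3$-connected planar $W_6$-minor-free graphs.

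The main obstacle is the reliability of the $W_6$-minor test, since the lemma's correctness rests entirely on it. I would cross-validate it in two ways. First, confirm that it reports $W_6\preceq G$ for the graphs of Lemma~\ref{lemma4.3}. Second, confirm that the number of $3$-connected planar survivors, before the internal-connectivity filter, is consistent with Gubser's list. I would also recheck minimality of the separation test on small known examples such as $DW_5$ versus $DW_6$.
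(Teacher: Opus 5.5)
Your plan is correct and is essentially the paper's proof: Gubser's bounds (Lemma~\ref{lemma4.6}) make the class finite, Plantri enumerates the $3$-connected planar graphs within those bounds, and the output is filtered for internal $4$-connectivity and $W_6$-minor-freeness. The one difference is that the paper does the $W_6$ step by hand on the nine internally $4$-connected candidates of Table~\ref{table4}, exhibiting $W_6$ models in $C^2_8+e$ and $R$ and using short contraction and degree arguments for the other seven, whereas you automate this step (one small slip: discarding $n=5$ needs more than the nonplanarity of $K_5$, because the planar $3$-connected graphs $W_4$ and $K_5\setminus e$ must also be excluded, which holds since each has a cubic vertex in a triangle, Lemma~\ref{lemma4.2}).
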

	
	\begin{figure}[H]
		\input{NQ.tpx}
		\label{fig9}
	\end{figure}

	\begin{proof}
	Using Plantri, we obtain the graphs listed in Table~\ref{table4}.
	
	\begin{table}[H]
		\centering
		\caption{Internally $4$-connected planar graphs}
		\label{table4}
		\begin{tabular}{|c|c|c|c|c|c|c|}
			\hline
			\textbf{Order} & $6$ & $7$ & $8$ & $9$ & $10$ & $11$ \\ \hline
			\textbf{Graphs} & $C^2_6$ & $DW_5$ & $\mathrm{cube}$, $C^2_8$, $C^2_8+e$ & $N$ & $Q$, $AW_8$ & $R$ \\ \hline
		\end{tabular}
	\end{table}
	
	First, Figure~\ref{fig10} shows that $C^2_8+e$ and $R$ contain a $W_6$ minor.
	
	\begin{figure}[H]
		\input{c28.tpx}
		\label{fig10}
	\end{figure}
	
	Now, we prove that the other graphs $\mathrm{cube}$, $C^2_8$, $N$, $Q$, and $AW_8$ are $W_6$-minor-free.

	For $\mathrm{cube}$ and $C^2_8$, any $W_6$ minor would have to be obtained by contracting one edge. However, after contracting any edge, the resulting graph has no vertex of degree $6$. For $N$, no vertex of degree $6$ is produced by contracting two edges.
	
	Since $Q$ has ten vertices and fifteen edges, while $W_6$ has seven vertices and twelve edges, any $W_6$ minor of $Q$ must be obtained by contracting three edges, and these contractions cannot create parallel edges. Otherwise, after suppressing parallel edges, the resulting simple graph would have fewer than twelve edges and hence could not be $W_6$. Therefore, it suffices to consider contractions that create a vertex of degree $6$. Up to symmetry, this can only occur by first contracting the edge $28$ and then contracting either $78$ or $89$.  In each of the two resulting graphs, all edges of the potential degree-$6$ vertex $8$ are contained within triangles.
	
	For $AW_8$, up to isomorphism, the only contractions that can create a vertex of degree $6$ are obtained by contracting one of the edge sets $\{13,14,17\}$, $\{13,17,24\}$, and $\{13,14,12\}$. In each case, the resulting degree-$6$ vertex is $1$. However, in none of these cases does the resulting graph contain a cycle $C_6$ after the deletion of vertex $1$. 
	\end{proof}

	Based on Lemmas~\ref{lemma4.3}--\ref{lemma4.7}, we obtain all internally $4$-connected $\{V_8,W_6\}$-minor-free graphs.

	\begin{lem}\label{lemma4.8}
	The only internally $4$-connected $\{V_8,W_6\}$-minor-free graphs are
	$K_5$, $K_{3,3}$, $DW^+_4$, $C^2_6$, $K_6\setminus e$, $K_6$, $\Gamma_1$, $K^{2,0}_{4,3}$, $C^2_7$, $DW_5$, $K^{3,0}_{4,3}$, $K^{3',0}_{4,3}$, $C^2_7+e$, $K^{2',1}_{4,3}$, $K^{3',1}_{4,3}$, $K^{4,0}_{4,3}$, $\Gamma_2$, $K^{4,1}_{4,3}$, $\mathrm{cube}$, $C^2_8$, $N$, $Q$, and $AW_8$.
	\end{lem}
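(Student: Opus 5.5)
Lemma~\ref{lemma4.8} will follow by combining the structure theorem of Lemma~\ref{lemma2.2} with Lemmas~\ref{lemma4.3}--\ref{lemma4.7}. Let $G$ be internally $4$-connected and $\{V_8,W_6\}$-minor-free. Lemma~\ref{lemma2.2} places $G$ in one of five classes, and we treat each class in turn.

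\emph{Case 1: $G$ has at most seven vertices.} Since $G$ is internally $4$-connected, $|V(G)|\ge 5$. If $G$ is nonplanar, Lemma~\ref{lemma4.4} gives exactly $K_5$; $K_{3,3}$, $DW^+_4$, $K_6\setminus e$, $K_6$; and the eleven graphs of order $7$. If $G$ is planar, it falls under Case 2 below. We must also check that each graph obtained is $V_8$-minor-free. This is automatic, because $V_8$ has eight vertices.

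\emph{Case 2: $G$ is planar.} Lemma~\ref{lemma4.7} gives $C^2_6$, $DW_5$, $\mathrm{cube}$, $C^2_8$, $N$, $Q$ and $AW_8$. Each of these is $V_8$-minor-free, since $V_8$ is nonplanar and minors of planar graphs are planar.

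\emph{Case 3: $G$ is a double-wheel or an alternating double-wheel.} If $n\ge 6$, then $DW_n$ and $DW^+_n$ contain the wheel $W_6$ directly: take the hub $u$ and contract the rim $C_n$ down to $C_6$. For alternating double-wheels $AW_{2n}$ and $AW^+_{2n}$ with $n\ge 4$, we contract rim edges at $v$-neighbours so that $u$ becomes adjacent to the whole contracted rim. This yields a wheel $W_n$ with $n\ge 6$ once $2n\ge 12$. The same contraction also handles $AW_{10}$ and $AW^{+}_{10}$: there it gives $W_5$ plus $v$, and $v$ contracted into the rim gives $W_6$ after choosing which rim edge to absorb. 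The small cases left are:
\begin{itemize}
\item $DW_n, DW^+_n$ for $n\le 5$: these have order at most $7$ and are covered by Case 1 or Case 2 ($DW_5$, $DW^+_4$).
\item $AW_6$ and $AW^+_6$: $AW^+_6$ contains $W_6$ by Lemma~\ref{lemma4.3}, and $AW_6$ has order $8$. For $AW_6$, I expect it either has a cubic vertex in a triangle or contains $W_6$; I would check this directly.
\item $AW_8$ and $AW^+_8$: $AW_8$ is planar and already listed. I expect $AW^+_8$ to contain $W_6$, since $AW^+_8/uv$ is (to be checked) a wheel on a contracted rim, giving a $W_6$ minor.
\end{itemize}

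\emph{Case 4: some four vertices meet all edges.} Lemma~\ref{lemma4.5} gives only $\mathrm{cube}$.

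\emph{Case 5: $G\cong L(K_{3,3})$.} This contains $W_6$ by Lemma~\ref{lemma4.3}, so it is excluded.

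Collecting the survivors of all five cases gives exactly the listed graphs. The main obstacle is Case 3: verifying that every large double-wheel and alternating double-wheel contains $W_6$. This requires an explicit contraction scheme for each parity and size, in particular for $AW_{10}$ and $AW^+_8$. I would first write down the uniform scheme (contract pairs of rim edges so that the surviving hub sees a $6$-cycle) and then handle the remaining small sizes by hand.
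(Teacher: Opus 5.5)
Your proof follows the paper's route. You run through the five outcomes of Lemma~\ref{lemma2.2} and dispatch each with Lemmas~\ref{lemma4.3}--\ref{lemma4.7}. Cases 1, 2, 4 and 5 are exactly what the paper does, and your final list is correct. You diverge only in Case~3, and two of your sub-claims there are false. They do no damage, because every $DW_n$ and every $AW_{2n}$ (no $uv$ edge) is planar and so is already settled by Case~2.

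\begin{itemize}
\item You expect $AW_6$ to have a cubic vertex in a triangle or to contain $W_6$; neither holds. $AW_6$ is cubic and bipartite with parts $\{1,3,5,v\}$ and $\{2,4,6,u\}$, so it is $K_{4,4}$ minus a perfect matching, i.e.\ $AW_6\cong\mathrm{cube}$. That graph is on the list, so the direct check you propose would fail.
\item For $AW_{10}$, contracting the rim edges $\{2i-1,2i\}$ produces $DW_5$, which is itself on the list and hence $W_6$-minor-free. Once you are at $DW_5$, no further deletion or contraction can give $W_6$. Routing the rim through $v$ fails precisely because $uv\notin E(AW_{10})$. The graph does contain $W_6$ by another model: take hub $\{u,1\}$ and rim branch sets $\{2\},\{3,4\},\{5,6\},\{7,8\},\{9\},\{10,v\}$. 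But you do not need this.
\end{itemize}

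The ``main obstacle'' you identify largely disappears once you restrict Case~3 to the nonplanar graphs. Contracting $uv$ gives $DW^+_n/uv\cong W_n$ and $AW^+_{2n}/uv\cong W_{2n}$. Hence $W_6\preceq AW^+_{2n}$ for every $n\ge 3$, and $W_6\preceq DW^+_n$ for $n\ge 6$. The remaining graphs $DW^+_3\cong K_5$, $DW^+_4$ and $DW^+_5$ have order at most $7$ and fall under Lemma~\ref{lemma4.4}. This is essentially the paper's one-line argument, $W_6\preceq AW^+_6\preceq DW^+_6$, and no size- or parity-dependent contraction scheme is required.
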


	\begin{proof}
		Since $DW^+_6$ contains an $AW^+_6$ minor, any double-wheel or alternating double-wheel appearing in the present class must have sufficiently small order. Hence the only such possibilities are $DW^+_3$ and $DW^+_4$. Observe that $DW^+_3$ is isomorphic to $K_5$. The remaining four cases in Lemma~\ref{lemma2.2} are handled by Lemmas~\ref{lemma4.3}--\ref{lemma4.7}.
	\end{proof}

\section{$T$-sum}

	In this section, we determine the $3$-connected nonplanar $\{V_8,W_6\}$-minor-free graphs by applying Lemma~\ref{lemma2.3}.

	Note that $K_5$, $DW^+_4$, $C^2_6$, $K_6\setminus e$, $K_6$, $C^2_7$, $DW_5$, $K^{3',0}_{4,3}$, $C^2_7+e$,  $K^{2',1}_{4,3}$, $K^{3',1}_{4,3}$, $K^{4,0}_{4,3}$, $\Gamma_2$,  $K^{4,1}_{4,3}$ and $C^2_8$ do not participate in any $T$-sums as these graphs do not contain a cubic vertex. This leads to the following characterization of $3$-connected nonplanar $\{V_8,W_6\}$-minor-free graphs.

	\begin{lem}\label{lemma5.1}
	A $3$-connected graph $G$ is $\{V_8,W_6\}$-minor-free if and only if  $G$ belongs to \{$K_5$, $K_{3,3}$, $DW^+_4$, $C^2_6$, $K_6\setminus e$, $K_6$, $\Gamma_1$, $K^{2,0}_{4,3}$, $C^2_7$, $DW_5$, $K^{3,0}_{4,3}$, $K^{3',0}_{4,3}$, $C^2_7+e$, $K^{2',1}_{4,3}$, $K^{3',1}_{4,3}$, $K^{4,0}_{4,3}$, $\Gamma_2$, $K^{4,1}_{4,3}$, $\mathrm{cube}$, $C^2_8$, $N$, $Q$, $AW_8$\} or it is constructible from $K_4$, $K_{3,3}$, $\Gamma_1$, $K^{2,0}_{4,3}$, $K^{3,0}_{4,3}$, $\mathrm{cube}$, $N$, $Q$, and $AW_8$ by repeated $T$-sums.
	\end{lem}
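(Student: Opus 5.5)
The plan is to combine Lemma~\ref{lemma2.3} with Lemma~\ref{lemma4.8}. Lemma~\ref{lemma2.3} says that a $3$-connected graph is $V_8$-minor-free exactly when it is built by repeated $T$-sums of $K_4$ and internally $4$-connected $V_8$-minor-free graphs. Lemma~\ref{lemma4.8} lists the internally $4$-connected $\{V_8,W_6\}$-minor-free graphs. The proof has two directions, and both rest on the fact that the summands of a $T$-sum are minors of the sum.

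\emph{Necessity.} Let $G$ be $3$-connected and $\{V_8,W_6\}$-minor-free.
\begin{enumerate}
\item If $G$ is internally $4$-connected, Lemma~\ref{lemma4.8} places $G$ in the displayed list.
\item Otherwise, Lemma~\ref{lemma2.3} writes $G$ as repeated $T$-sums of $K_4$ and internally $4$-connected $V_8$-minor-free graphs.
\item I will check that each summand is a minor of $G$: contract the side of the $3$-separation opposite that summand onto a single cubic vertex. This works because the other side is $3$-connected after adding the cubic vertex, so it contains three disjoint paths to the separator, and that gives the required minor.
\item Hence each internally $4$-connected summand is $W_6$-minor-free, so by Lemma~\ref{lemma4.8} it lies in the list of Lemma~\ref{lemma4.8}.
\item A summand that genuinely takes part in a $T$-sum must have a cubic vertex. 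Inspecting the degree sequences removes $K_5$, $DW^+_4$, $C^2_6$, $K_6\setminus e$, $K_6$, $C^2_7$, $DW_5$, $K^{3',0}_{4,3}$, $C^2_7+e$, $K^{2',1}_{4,3}$, $K^{3',1}_{4,3}$, $K^{4,0}_{4,3}$, $\Gamma_2$, $K^{4,1}_{4,3}$ and $C^2_8$. The remaining summands are $K_4$, $K_{3,3}$, $\Gamma_1$, $K^{2,0}_{4,3}$, $K^{3,0}_{4,3}$, cube, $N$, $Q$ and $AW_8$.
\end{enumerate}

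\emph{Sufficiency.} Every graph in the first list is $\{V_8,W_6\}$-minor-free by Lemma~\ref{lemma4.8}. For a graph built by repeated $T$-sums from the second list, Lemma~\ref{lemma2.3} already gives $V_8$-minor-freeness, so the work is to show the $T$-sum creates no $W_6$ minor. The plan is to show that a $3$-connected minor $W_6$ of a $T$-sum $G_1\oplus_T G_2$ along a $3$-separation $(A,B)$ must be a minor of $G_1$ or of $G_2$. For this I would use the standard argument that a $3$-connected minor uses at most one vertex branch set strictly on one side of a $3$-separator, up to rerouting through the cubic vertex. I will treat this as a known fact about $3$-sums and $3$-connected minors, in the spirit of Lemma~\ref{lemma2.3}. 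The argument applies because $W_6$ is internally $4$-connected: its only $3$-separations cut off a single rim vertex with its three neighbours. So a model of $W_6$ crossing the separator has all but at most one branch set on one side, and the remaining side can be replaced by the cubic vertex of the corresponding summand. By induction on the number of $T$-sums, $G$ is $W_6$-minor-free.

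\emph{Main obstacle.} I expect the delicate point to be the $T_i$-sums with $i\geq 1$, where matching edges are contracted and the result is simplified. There the separator vertices are identified, so the ``one side collapses to a cubic vertex'' argument needs care: I would check that contracting matching edges keeps each summand a minor and that branch sets meeting the separator still reroute correctly. If a general argument falters, the fallback is that the summands are few and small, so the finitely many local $T$-sum configurations can be verified directly, as the paper does elsewhere by exhaustive checking.
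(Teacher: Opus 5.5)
Your necessity direction is correct and matches how the paper obtains this lemma. Lemma~\ref{lemma2.3} gives a $T$-sum decomposition, and each summand is a minor of the sum (the paper records this in the lemma right after Lemma~\ref{lemma5.1}). So the internally $4$-connected summands come from Lemma~\ref{lemma4.8}, and the graphs with no cubic vertex cannot be summands. The paper gives no further argument, and in particular it proves no converse.

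\textbf{The sufficiency direction cannot be carried out, because the ``if'' half of the statement as written is false.}

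\begin{itemize}
\item The claim you rely on, that $W_6$ is internally $4$-connected, is wrong. With hub $h$ and rim $1,\dots,6$, the set $\{h,1,4\}$ separates $\{2,3\}$ from $\{5,6\}$, and neither side is $K_{1,3}$.
\item In fact $W_6$ is itself built by $T$-sums. Take a $T_1$-sum of $W_n$ at a rim vertex with $K_4$, contracting the matching edge at the hub. This gives $W_{n+1}$, so $W_6$ arises from $K_4$ alone by three $T$-sums and lies in the second alternative of the statement.
\item The failure also occurs among the listed blocks. Take the $T_0$-sum of two copies of $K_{3,3}$, with sides $\{a_1,a_2\},\{b_1,b_2,b_3\}$ and $\{c_1,c_2\},\{d_1,d_2,d_3\}$ and matching $b_id_i$. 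Contracting $a_1b_1$, $b_1d_1$, $d_1c_1$ gives a vertex adjacent to $a_2,b_2,d_2,c_2,d_3,b_3$, and these form a $6$-cycle in that order. See also Table~\ref{table7}, Lemma~\ref{lemma5.3} and Lemma~\ref{lemma5.4}.
\end{itemize}

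So the problem is not confined to $T_i$-sums with $i\ge 1$. Your fallback of exhaustive local checking cannot help either, since those checks turn up $W_6$ minors. The lemma can only be proved in its ``only if'' form, or with the second alternative restricted to $W_6$-minor-free $T$-sums. Deciding which $T$-sums remain $W_6$-minor-free is exactly what the rest of Section~5 does.
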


	\begin{lem}
	Let $G_1$ contain a $3$-connected graph $G$ as a minor, and $G_2$ be a $3$-connected graph. If $H$ is a $T$-sum of $G_1$ with $G_2$, then $H$ contains $G$ as a minor. 
	\end{lem}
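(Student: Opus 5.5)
The plan is to show directly that $G_1$ is a minor of $H$; since $G\preceq G_1$ and the minor relation is transitive, this gives the statement. Let $x\in V(G_1)$ and $y\in V(G_2)$ be the cubic vertices used in the $T$-sum, with $N_{G_1}(x)=\{a_1,a_2,a_3\}$ and $N_{G_2}(y)=\{b_1,b_2,b_3\}$. The sum uses the matching edges $a_ib_i$, and some of these are contracted before simplifying. We would first work with the graph $H_0$ obtained \emph{before} any matching edge is contracted: it is $(G_1-x)\cup(G_2-y)$ together with the three edges $a_ib_i$. The $T_i$-sum $H$ is $H_0$ with $i$ matching edges contracted and parallel edges removed. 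The key observation is that contracting a matching edge $a_ib_i$ does not destroy anything we need, because we will contract along such paths anyway. So it suffices to produce a $G_1$-minor model in $H$ that is compatible with these contractions.

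The construction is as follows. Since $G_2$ is $3$-connected, Menger's theorem (the fan lemma) gives three paths $P_1,P_2,P_3$ in $G_2$ from $y$ to $b_1,b_2,b_3$ respectively, pairwise disjoint except at $y$. Indeed, $y$ has degree exactly $3$, so the paths are just the edges $yb_i$. That observation by itself is not useful, so instead we use the following. In $G_2-y$, which is $2$-connected because $G_2$ is $3$-connected, we need a connected subgraph $T$ containing $b_1,b_2,b_3$ that we can contract to a single vertex playing the role of $x$. The simplest choice is a tree in $G_2-y$ connecting $b_1,b_2,b_3$; it exists because $G_2-y$ is connected. In $H$, we contract all of $T$, together with the uncontracted matching edges $a_ib_i$, into one vertex $x'$, and we delete every remaining vertex and edge of $G_2-y$ not in $T$. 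The resulting vertex $x'$ is adjacent exactly to $a_1,a_2,a_3$. Also, $G_1-x$ is left untouched, except that edges among the $a_i$ present in $G_1$ may have been merged with edges coming from $G_2$ during simplification; such edges are still present, since simplification only removes duplicates. Hence we obtain exactly $G_1$ as a minor of $H$.

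The one step needing care is the effect of contracted matching edges. When $a_ib_i$ is contracted in forming $H$, the vertex $a_i$ of $H$ already carries the $G_2$-side adjacencies of $b_i$, and $T$ then lives in $H$ on the vertex set $(V(T)\setminus\{b_i\})\cup\{a_i\}$. If two or three matching edges are contracted, $T$ would join $a_i$ and $a_j$, and contracting $T$ would merge those $G_1$-vertices, which is wrong. To avoid this, we choose $T$ differently: pick a vertex $w\in V(G_2)\setminus\{y,b_1,b_2,b_3\}$ if one exists, and by the fan lemma applied in $G_2-y$ (which needs $2$-connectivity; with $3$-connectivity of $G_2$, Menger gives three internally disjoint $w$–$\{b_1,b_2,b_3\}$ paths in $G_2$ that avoid $y$ since $y$'s only neighbours are the $b_i$), take three paths from $w$ to $b_1,b_2,b_3$ that are disjoint except at $w$. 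We contract each path minus its endpoint $b_i$ into $w$, and the edge into $b_i$ (now $a_i$ if contracted) becomes the edge $x'a_i$. If $G_2=K_4$, where no such $w$ exists, the $T$-sum with $K_4$ simply returns $G_1$ with possibly extra edges among the $a_i$, which again contains $G_1$. I expect this case analysis of which matching edges are contracted to be the main point to check, the rest being routine.
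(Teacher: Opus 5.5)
Your main case, $|V(G_2)|\ge 5$, is the paper's argument. You take $w\in V(G_2)\setminus\{y,b_1,b_2,b_3\}$, use Menger's theorem (your fan-lemma form) to get paths from $w$ to $b_1,b_2,b_3$ that avoid $y$, and contract them so that $w$ takes over the role of $x$. No internal vertex of these paths is touched when matching edges are contracted, so this gives $G_1\preceq H$ for every $T_i$-sum. There is one small slip: if $a_ib_i$ was \emph{not} contracted, you must also contract the last edge of $P_i$, so that the matching edge $b_ia_i$ becomes the edge $x'a_i$.

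The genuine gap is the $K_4$ case, which is exactly where the paper uses an ingredient you never state. Your claim that a $T$-sum with $K_4$ ``simply returns $G_1$ with possibly extra edges among the $a_i$'' holds only for the $T_2$-sum.
\begin{itemize}
\item The $T_0$-sum still contains the triangle of $K_4-y$, and the $T_1$-sum still contains an edge of it. Both contract onto $G_1$, so these are easily repaired.
\item The $T_3$-sum, however, is $(G_1-x)$ plus a triangle on $\{a_1,a_2,a_3\}$, that is, a $Y\Delta$-exchange at $x$. It need not contain $G$: for $G=G_1=K_{3,3}$ it is $K_5\setminus e$, which has only five vertices.
\end{itemize}
So the statement is false in that case unless it is excluded. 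The paper excludes it by appealing to the definition of $T$-sum: when one summand is $K_4$, at most two matching edges are contracted. This is consistent with its tables, which list only $0$, $1$, $2$ contracted edges for sums with $K_4$. You need to invoke this restriction explicitly; with it, your proof is complete.
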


	\begin{proof}
	If $|G_2|=4$, then $G_2$ is isomorphic to $K_4$. Based on the definition of $T$-sum, at most two matching edges are contracted. We obtain the graph $G_1+e$. Then $H$ contains $G$ as a minor. Now we consider that $|G_2|\geq 5$. Suppose that $u_i$ is a cubic vertex of $G_i$ with neighbors $x_i, y_i, z_i, i=1,2$. Let $H$ be a $T(u_1,u_2)$-sum of $G_1$ with $G_2$. Let $w$ be a vertex of $G_2$. Since $G_2$ is 3-connected, $y_2$ is adjacent to $w$ by symmetry and there are at least three internally disjoint paths between $x_2$ and $w$ by Menger's theorem~\cite{menger1927allgemeinen}. There is an ($x_2, w$)-path that does not contain $y_2$ or $z_2$. Similarly, there is a ($z_2, w$)-path that does not contain $x_2$ or $y_2$. Then $w$ is adjacent to $x_2$, $y_2$, and $z_2$ by contracting some vertices in the paths. $w$ is adjacent to $x_1$, $y_1$, and $z_1$ even if all three matching edges $x_iy_i$ are contracted. Thus, $H$ contains $G$ as a minor.
	\end{proof}

	We only need to consider the result of the $T$-sum of two graphs, both of which are $W_6$-minor-free. Let $\mathcal{H}$ denote all nonplanar graphs that can be constructed from $K_4$, $K_{3,3}$, $\Gamma_1$, $K^{2,0}_{4,3}$, $K^{3,0}_{4,3}$, $\mathrm{cube}$, $N$, $Q$, and $AW_8$ by repeated $T$-sums. Moreover, a $T$-sum of two planar graphs is again planar. Since the planar case has already been characterized by Gubser, no further discussion of this case is needed here.

	\begin{lem}\label{lemma5.3}
	Let $H \in \mathcal{H}$. $H$ is $K^{2,1}_{4,3}$ or $K^{3,1}_{4,3}$ if $K^{2,0}_{4,3}$ or $K^{3,0}_{4,3}$ participates in the $T$-sum. 
	\end{lem}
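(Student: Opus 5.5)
The plan is to exploit how little room $K^{2,0}_{4,3}$ and $K^{3,0}_{4,3}$ leave for a $T$-sum. Write $X_1=\{w,x_1,x_2,x_3\}$ and $X_2=\{a,b,c\}$. In $K^{2,0}_{4,3}$ the two added edges share an end, and in $K^{3,0}_{4,3}$ the three added edges form a triangle on $\{x_1,x_2,x_3\}$. In either case the vertices $x_1,x_2,x_3$ have degree at least $4$, and $a,b,c$ have degree $4$. So the unique cubic vertex is $w$, with $N(w)=\{a,b,c\}$ an independent set. I would first check this by a direct degree count. It follows that any $T$-sum in which $G_1\in\{K^{2,0}_{4,3},K^{3,0}_{4,3}\}$ participates is taken at $w$: we delete $w$, match $a,b,c$ to the neighbours $p,q,r$ of a cubic vertex of $G_2$, and contract some matching edges. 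Since only $W_6$-minor-free sums matter, I aim to show that every such sum either contains $W_6$ or is $G_1+e$ with $e$ an edge inside $X_2$. That graph is, up to isomorphism, $K^{2,1}_{4,3}$ (respectively $K^{3,1}_{4,3}$).

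\textbf{Step 1 (the base case $G_2=K_4$).} Here $p,q,r$ form a triangle, and by the argument in the proof of the preceding lemma at most two matching edges are contracted.
\begin{itemize}
\item If $ap$ and $bq$ are contracted, then $r$ plays the role of $w$ and the edge $pq$ becomes $ab$. So $H=G_1+ab$, which is $K^{2,1}_{4,3}$ or $K^{3,1}_{4,3}$, as claimed.
\item If no matching edge or exactly one is contracted, the result has $8$ or $9$ vertices. I would exhibit explicit $W_6$-minor models, in the style of the figures used earlier. A natural hub is $x_1$, or the vertex of degree $5$ in $K^{2,0}_{4,3}$, and the rim $6$-cycle runs alternately through $X_2$ and the new triangle vertices.
\end{itemize}
These are finite checks on four small graphs.

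\textbf{Step 2 (reduction for $|G_2|\ge 5$).} Suppose $G_2$ is any $3$-connected graph with at least five vertices; by Lemma~\ref{lemma5.1} this covers the remaining building blocks and all iterated $T$-sums. I would use the Menger-type argument from the proof of the preceding lemma. Pick a vertex $t$ of $G_2$ outside $\{u_2,p,q,r\}$, and take three internally disjoint paths from $t$ to $p,q,r$. Contracting these paths puts $t$ adjacent to all of $p,q,r$. This leaves the remaining vertices and edges of $G_2$ available, in particular paths among $p,q,r$ avoiding $t$. Contracting those paths as well should give, as a minor of $H$, the $T$-sum of $G_1$ with $K_4$ that has the same number of contracted matching edges as $H$. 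The one exception is the case where two matching edges are contracted; there the extra material of $G_2$ supplies a second edge inside $X_2$ (or a subdivided one). This yields $G_1+ab+bc$, i.e.\ a supergraph of $K^{2,2}_{4,3}$ or $K^{3,2}_{4,3}$, which I would show contains $W_6$.

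\textbf{Step 3 (conclusion).} By Step~1, every case except "two contracted edges with $G_2=K_4$" yields $W_6$. When all three matching edges are contracted, $H$ collapses to at most $7$ vertices with a triangle on $X_2$; this is again a graph with two or more edges in $X_2$, which I would check contains $W_6$ or is not among the nonplanar graphs of Lemma~\ref{lemma4.4}. Hence $H$ is $K^{2,1}_{4,3}$ or $K^{3,1}_{4,3}$.

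The main obstacle is Step~2. I need to guarantee that, after routing $t$ to $p,q,r$, enough of $G_2$ remains to produce the extra edge among $p,q,r$ or the extra uncontracted vertex, and not merely $K_4$ again. I would first try a case split on whether some vertex of $G_2-u_2$ beyond $t$ exists, which holds when $|G_2|\ge5$. I would then use $3$-connectivity of $G_2-u_2$ plus $p,q,r$, and fall back on checking the building blocks of Lemma~\ref{lemma5.1} directly if the general argument gets stuck.
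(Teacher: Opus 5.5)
Your base case $G_2=K_4$ agrees with the paper. The $T_2$-sum is $G_1+ab$, i.e.\ $K^{2,1}_{4,3}$ (resp.\ $K^{3,1}_{4,3}$). The $T_1$-sum is the paper's graph $J$, which contains $W_6$ and is a minor of the $T_0$-sum. When you build that model, note that a hub at the degree-$5$ vertex $x_1$ of $X_1$ (centre of the path $x_2x_1x_3$) fails: after contracting $x_1a$, both $x_2$ and $x_3$ see only $b,c$, so there is no rim. With $ap$ contracted, contracting $ax_2$ works instead, with rim $x_1x_3bqrc$.

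The genuine gap is the case $|G_2|\ge 5$, which is where the paper's proof does its real work. Your Step~3 description of a $T_3$-sum (at most $7$ vertices, a triangle on $X_2$) holds only for $G_2=K_4$. In general the $T_3$-sum has $|G_2|+2$ vertices and need not have any edge inside $X_2$; for $G_2=K_{3,3}$ it is $K^{2,0}_{5,3}$, with $X_2$ independent. Step~2 does not rescue this case. Reducing to the $T_3$-sum with $K_4$ yields only $G_1-w$ plus a triangle on $X_2$, a six-vertex graph, which cannot contain $W_6$. So the $T_3$-sums with the building blocks other than $K_4$, which are the heart of the lemma, are not handled at all.

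The missing idea is to reduce in the opposite direction. For every $G_2$, each $T_i$-sum with $i\le 2$ contains the $T_3$-sum as a minor (contract the remaining matching edges). So for $|G_2|\ge 5$ it suffices to find $W_6$ in the $T_3$-sum, and for that you need explicit witnesses.

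For $K_{3,3}$, write $s_1,s_2$ for the vertices of $K_{3,3}-u_2$ other than $p,q,r$. Contracting $s_2a$ turns $K^{2,0}_{5,3}$ into $K^{2,2}_{4,3}$, which contains $W_6$ as a subgraph (hub $a$, rim $s_1bx_2x_1x_3c$); this is what the paper does.

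Your ``$t$ plus a further connection among $p,q,r$'' mechanism does not work uniformly, though. In the cube, any connected subgraph of $G_2-\{u_2,p,q,r\}$ adjacent to all of $p,q,r$ must contain the antipode of $u_2$ and two of the three vertices at distance two from $u_2$. That leaves only one connection among $p,q,r$, so you obtain only $K^{2,1}_{4,3}$. There the $W_6$ comes from $J$ instead, and $J$ (eight vertices, sixteen edges) does not even contain $K^{2,2}_{4,3}$. The paper checks the $T_3$-sums with the cube and the remaining blocks one by one for $J$.

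Finally, your appeal to Lemma~\ref{lemma5.1} to absorb iterated sums into a single sum $G_1\oplus G_2$ is unjustified. The new cubic vertex of $K^{2,1}_{4,3}$ is adjacent to a vertex of $G_1$, so the paper treats further sums there separately: the $T_2$-sum of $K^{2,1}_{4,3}$ with $K_4$ is $K^{2,2}_{4,3}$ or $K^{2,1}_{4,3}$ itself.
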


	\begin{figure}[H]
	\input{kkk43.tpx}
	\end{figure}

	\begin{proof}
	We label the unique cubic vertex as 1. The result of $T_3$-sum of $K^{2,0}_{4,3}$ with $K_{3,3}$ is $K^{2,0}_{5,3}$. By contracting one of the new edges, $K^{2,2}_{4,3}\preceq K^{2,0}_{5,3}$. Since $K^{2,2}_{4,3}$ contains $W_6$	as a minor, it follows that $W_6\preceq K^{2,0}_{5,3}$.  
	It is easy to check that any $T_3$-sum of $K^{2,0}_{4,3}$ with any graph from the set $\{\mathrm{cube}, N, Q, AW_8, K^{2,0}_{4,3}, \Gamma_1, K^{3,0}_{4,3}\}$ contains the graph $J$. By contracting the heavy edge of $J$, we obtain $W_6\preceq J$. Thus, we only need to consider the $T$-sum of $K^{2,0}_{4,3}$ with $K_4$. The result of $T_1$-sum of $K^{2,0}_{4,3}$ and $K_4$ is $J$, which contains a $W_6$ minor. The result of $T_2$-sum of $K^{2,0}_{4,3}$ with $K_4$ is $K^{2,0}_{4,3}+56(57,67)$, which is $K^{2,1}_{4,3}$. We then only need to consider the $T$-sum of $K^{2,1}_{4,3}$ with $K_4$.
		\begin{figure}[H]
		\input{J.tpx}
		\end{figure}
	The $T_2$-sum of $K^{2,1}_{4,3}$ with $K_4$ is $K^{2,2}_{4,3}$ or itself. Since $K^{2,2}_{4,3}$ contains $W_6$, we terminate the discussion regarding the $T$-sum of $K^{2,0}_{4,3}$. $H$ is $K^{2,1}_{4,3}$ if $K^{2,0}_{4,3}$ participates in the $T$-sum.
	
	Adding an edge between the two degree-four vertices in graph $K^{2,0}_{4,3}$ results in graph $K^{3,0}_{4,3}$. The same argument applies to $K^{3,0}_{4,3}$, since the added edge is not incident with the unique cubic vertex and hence does not affect the preceding $T$-sum analysis. Thus, $H$ is $K^{3,1}_{4,3}$ if $K^{3,0}_{4,3}$ participates in the $T$-sum.
	\end{proof}

	\begin{lem}\label{lemma5.4}
	Let $H \in \mathcal{H}$. $\mathcal{H}$ is empty if $N$, $Q$, or $AW_8$ participates in the $T$-sum.
	\end{lem}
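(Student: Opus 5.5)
We aim to show that any $T$-sum involving $N$, $Q$, or $AW_8$ with another building block from Lemma~\ref{lemma5.1} either contains a $W_6$ minor or is planar. In the latter case it lies outside $\mathcal{H}$, which by definition consists of nonplanar graphs. The first step is structural. The graphs $N$, $Q$, $AW_8$ are planar with cubic vertices, and a $T$-sum of two planar graphs is planar. So a member of $\mathcal{H}$ built using one of them must, somewhere in the construction, involve a nonplanar block. Tracing back, it suffices to consider a $T$-sum in which one side contains $N$, $Q$, or $AW_8$ as a minor and the other side contains a nonplanar block with a cubic vertex, namely $K_{3,3}$, $\Gamma_1$, $K^{2,0}_{4,3}$, or $K^{3,0}_{4,3}$.

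By Lemma~\ref{lemma5.3}, $K^{2,0}_{4,3}$ and $K^{3,0}_{4,3}$ can only be $T$-summed with $K_4$ without creating $W_6$. Their $T$-sums with $N$, $Q$, $AW_8$ were already shown there to contain $J$, hence $W_6$. So the remaining cases are $T$-sums of $N$, $Q$, or $AW_8$ with $K_{3,3}$ or $\Gamma_1$, together with their $K_4$-extensions. By the preceding lemma on $T$-sums preserving minors, a $T$-sum contains each summand's $3$-connected minors. I would also reduce to the minimal case, the $T_3$-sum. Contracting fewer matching edges only yields graphs having the $T_3$-sum as a minor: contracting the remaining matching edges of a $T_i$-sum gives the $T_3$-sum plus possibly extra edges. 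So it suffices to exhibit a $W_6$ minor in each $T_3$-sum. This is legitimate since minors of $W_6$-free graphs are $W_6$-free.

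For the $T_3$-sums, I would use the symmetry of $N$, $Q$, $AW_8$ to enumerate cubic vertices up to automorphism. For each orbit, I would consider the three matchings to the neighbours of the cubic vertex of $K_{3,3}$ (unique up to symmetry) or of $\Gamma_1$, taken up to the symmetries of both sides. In each $T_3$-sum, the cubic vertex of $N$, $Q$ or $AW_8$ is replaced by the other summand minus a vertex, which provides extra connectivity among the three neighbours. Concretely, $K_{3,3}$ minus a vertex contributes two vertices each adjacent to all three neighbours. Contracting appropriately turns the three former neighbours into a configuration that, together with the surrounding planar structure, yields a degree-$6$ hub over a $6$-cycle. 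For instance, in $AW_8$ one can contract towards one of the hubs $u,v$, whose degree is $4$; the two new vertices supply the two missing spokes. I would exhibit one explicit $W_6$ minor model per case, as the paper does with figures, or verify by the same computer minor test used for Tables~\ref{table1}--\ref{table4}.

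The main obstacle is the case analysis: ensuring every cubic-vertex orbit and matching is covered, especially for $N$ and $Q$, whose automorphism groups are smaller. One must also confirm that a $W_6$ minor exists in every case, and that no case is merely planar and $W_6$-free in a way that slips back into $\mathcal{H}$. I would first try a uniform argument: since $K_{3,3}$ and $\Gamma_1$ both contain $K_{3,3}$, after contracting edges of $\Gamma_1$ only the $K_{3,3}$ case needs an explicit model. I would fall back on exhaustive computer checking if the uniform argument fails.
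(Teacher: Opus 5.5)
Your plan is the paper's argument. A $T$-sum of planar graphs is planar, so a nonplanar partner is forced, and by the symmetry of the cubic vertices it suffices to exhibit a $W_6$ minor in the $T_3$-sum with $K_{3,3}$ (the paper gives a figure for $N$ and only asserts the same for $Q$ and $AW_8$). Your observation that every $T_i$-sum contracts onto the $T_3$-sum makes explicit why checking $T_3$ alone is enough.

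One small correction to your $AW_8$ heuristic: the hub adjacent to the replaced rim vertex loses that neighbour, so it only reaches degree $5$, not $6$. To get the degree-$6$ hub of a $W_6$ model, you must also merge it with its rim neighbour diametrically opposite the replaced vertex.
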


	\begin{proof}
	We only need to consider the $T$-sum of these graphs with nonplanar graphs. Note that all cubic vertices of $N$ are structurally equivalent. We take the vertex $4$ as an example. As illustrated in Figure~\ref{fig13}, $W_6$ is a minor of the $T_3$-sum of $N$ with $K_{3,3}$. Thus, $\mathcal{H}$ is empty if $N$ participates in the $T$-sum. 
	
	All cubic vertices of $Q$ are structurally equivalent, as are those of $AW_8$. The same argument applies to $Q$ and $AW_8$: in each case, a $T_3$-sum with $K_{3,3}$ contains a $W_6$ minor. Thus, $\mathcal{H}$ is empty if $Q$ or $AW_8$ participates in the $T$-sum.
	\end{proof}

	\begin{figure}[H]
	\input{N1.tpx}
	\label{fig13}
	\end{figure}

	\begin{lem}
	Let $H \in \mathcal{H}$. $H$ is a minor of $K^{3',1}_{4,3}$ if $\Gamma_1$ participates in the $T$-sum. 
	\end{lem}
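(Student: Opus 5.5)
We follow the pattern of Lemma~\ref{lemma5.3}. First we determine the cubic vertices of $\Gamma_1=K^{\triangledown}_{3,3}+15+26$ and sort them into orbits under $\mathrm{Aut}(\Gamma_1)$. We expect at most two orbits, since $\Gamma_1$ has seven vertices and its non-cubic vertices are those meeting the added edges and the triangle. For each orbit we fix a representative cubic vertex $u$ and analyse the $T$-sums at $u$ with each possible partner $G_2$. By the unnumbered lemma preceding Lemma~\ref{lemma5.3}, any $T$-sum containing a $W_6$-containing graph as a minor may be discarded. Lemma~\ref{lemma5.3} already settles every partner from $\{K^{2,0}_{4,3},K^{3,0}_{4,3}\}$, and Lemma~\ref{lemma5.4} settles $N$, $Q$ and $AW_8$. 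The remaining partners are therefore $K_4$, $K_{3,3}$, $\mathrm{cube}$ and $\Gamma_1$ itself.

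\textbf{Large partners.} For $K_{3,3}$, $\mathrm{cube}$ and $\Gamma_1$, we first treat the $T_3$-sum, since $T_0$, $T_1$ and $T_2$-sums contain it as a minor after contracting the remaining matching edges. The $T_3$-sum with $K_{3,3}$ replaces $u$ by two new vertices adjacent to $N(u)$, giving a graph on eight vertices. We exhibit a $W_6$ minor there, either by contracting one new edge to reach a $W_6$-containing $7$-vertex graph of the form $K^{\cdot,\cdot}_{4,3}$ (as with $K^{2,2}_{4,3}$ in Lemma~\ref{lemma5.3}), or by a direct model. Every $3$-connected graph $G_2$ on at least six vertices with a cubic vertex has a $T_3$-sum with $u$ containing the $T_3$-sum with $K_{3,3}$ or with a $K_{3,3}$-like structure. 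Rather than rely on that, we simply verify $\mathrm{cube}$ and $\Gamma_1$ directly, as was done for the graph $J$ in Lemma~\ref{lemma5.3}. This disposes of all partners except $K_4$.

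\textbf{Partner $K_4$.} A $T_0$- or $T_1$-sum with $K_4$ is a triangle-replacement of $u$, or that triangle with one edge contracted. We expect a $W_6$ minor here, found via a small model analogous to $J$. A $T_2$- or $T_3$-sum with $K_4$ just adds an edge between two neighbours of $u$, that is, it yields $\Gamma_1+e$ with $e\subseteq N(u)$. For each such edge we identify the result up to isomorphism. The claim is that each result is $W_6$-free only when it is a subgraph of $K^{3',1}_{4,3}$, for example that it is isomorphic to $K^{2',1}_{4,3}$, $K^{3',1}_{4,3}$ or a spanning subgraph of the latter; the rest should contain $W_6$. We then iterate. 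If the new graph still has a cubic vertex (for instance, $\Gamma_1$ has several cubic vertices and one edge addition kills only some of them), we repeat the $K_4$ analysis on it. Each step adds an edge inside $N$ of a cubic vertex, so the process ends within the list from Table~\ref{table3}, and all survivors lie below $K^{3',1}_{4,3}$, which itself has no cubic vertex.

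The main obstacle is the bookkeeping in the iterated $K_4$ step. We must check that every sequence of edge additions at cubic vertices of $\Gamma_1$ lands in a graph that is a minor of $K^{3',1}_{4,3}$ (or contains $W_6$). For this we will use Table~\ref{table3}, because all these graphs are edge-supersets of $K^{\triangledown}_{3,3}$ on seven vertices. Membership in that table decides $W_6$-freeness, and the only question left is containment in $K^{3',1}_{4,3}$. The large-partner $W_6$ models are routine finite checks.
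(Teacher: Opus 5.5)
Your outline is the paper's own strategy: two equivalent cubic vertices $3,7$; dispose of $K_{3,3}$, $\mathrm{cube}$ and the $T_0$/$T_1$-sums with $K_4$ by $W_6$ models; then iterate $T_2$-sums with $K_4$. The last step fails because the stated conclusion is false, so neither your proposal nor the paper's proof can prove the lemma as written.

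In $\Gamma_1$, which has $13$ edges, the cubic vertices are $3$ and $7$, with $N(3)=\{1,6,7\}$ and $N(7)=\{3,4,5\}$; the other five vertices have degree $4$. A $T_2$-sum with $K_4$ at $3$ adds $16$ and leaves $7$ untouched. Two further $T_2$-sums at $7$ add $34$ and $35$; vertex $7$ stays cubic because the surviving vertex of $K_4$ takes its place. The result $\Gamma_1+16+34+35$ is $3$-connected and nonplanar, with $7$ vertices, $16$ edges, maximum degree $5$, and vertex $7$ of degree $3$. It is $W_6$-minor-free: on seven vertices a $W_6$ minor must be a spanning subgraph, so it needs a vertex of degree $6$. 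But $K^{3',1}_{4,3}$ also has $7$ vertices and $16$ edges and has no cubic vertex, so $\Gamma_1+16+34+35$ is not a minor of it.

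The paper's proof has the same defect. Its own list contains exactly this graph ($\Gamma_1+34+35+16$). Its terminal graph $\Gamma_1+34+35+17+67$ has $17$ edges, so it cannot be $K^{3',1}_{4,3}$; it is in fact $K^{4,1}_{4,3}$. It is not even reachable, since adding $17$ or $67$ destroys the cubicity of $7$, and adding $34$ or $35$ destroys that of $3$. Up to the automorphism $(1\,4)(5\,6)(3\,7)$ of $\Gamma_1$, the $W_6$-free outputs of the $K_4$-iteration are subgraphs of $\Gamma_1+16+34+35$ or of $\Gamma_1+16+45$. These are minors of $\Gamma_1+16+34+35+27\cong K^{4,1}_{4,3}$ and of $\Gamma_1+16+45+23\cong K^{3,1}_{4,3}$, respectively. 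Both of those appear in Theorem~\ref{thm1.2}, so the main result survives, but this lemma does not.

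Your proposed decision tool would also have hidden the problem. Table~\ref{table3} lists only the \emph{internally $4$-connected} edge-extensions of $K^{\triangledown}_{3,3}$; for instance, it lists nothing for $0$ added edges, although $K^{\triangledown}_{3,3}$ has $11$ edges and is trivially $W_6$-minor-free. Every $T_2$-sum with $K_4$ at a cubic vertex $u$ leaves $u$ cubic and inside a triangle. So by Lemma~\ref{lemma4.2} no graph in your iteration is internally $4$-connected, none of them occurs in the table, and absence from the table says nothing about $W_6$.

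On seven vertices the correct test is direct: $W_6\preceq G$ if and only if some vertex $h$ has degree $6$ and $G-h$ is Hamiltonian. For example, $\Gamma_1+34+45$ contains $W_6$ with hub $4$ and rim $(3,7,5,1,2,6)$. This is exactly why $45$ cannot be combined with $34$ or $35$, and why your iteration stops at $\Gamma_1+16+34+35$ rather than climbing to $K^{3',1}_{4,3}$.

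A smaller slip: a $T_3$-sum with $K_4$ does not just add an edge; it deletes $u$ and puts a triangle on $N(u)$. Since it is a minor of a $T_2$-sum, this is harmless.
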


	\begin{proof}
	The two cubic vertices $\{3,7\}$ of $\Gamma_1$ are structurally equivalent. The $T_3$-sums of $\Gamma_1$ with $K_{3,3}$ and with $\mathrm{cube}$ each contain a $W_6$ minor. Moreover, the $T_1$-sum of $\Gamma_1$ with $K_4$ contains a $W_6$ minor.
	
	Hence only the $T_2$-sums of $\Gamma_1$ with $K_4$ need to be considered. Each such $T_2$-sum is isomorphic to $\Gamma_1+e$ for some $e\in \{16,17,34,35,45,67\}$. Repeating the same operation only adds further edges from this set. Up to isomorphism, the possible graphs obtained in this way are $\Gamma_1+34$, $\Gamma_1+45$, $\Gamma_1+34+35$, $\Gamma_1+34+16$, $\Gamma_1+34+17$, $\Gamma_1+16+45$, $\Gamma_1+34+35+16$, $\Gamma_1+34+35+17$, and $\Gamma_1+34+35+17+67$. The last graph is isomorphic to $K^{3',1}_{4,3}$. $H$ is a minor of $K^{3',1}_{4,3}$ if $\Gamma_1$ participates in the $T$-sum. 
	\end{proof}

	For each integer $n\geq 1$, let $\mathrm{cube}^n$ be the graph obtained from $\mathrm{cube}$ by adding $n$ new cubic vertices, each adjacent precisely to $\{2,4,6\}$.

	\begin{lem}\label{lemma5.6}
	 Let $H \in \mathcal{H}$. $H$ is a minor of $\mathrm{cube}^1+3x$ or $\mathrm{cube}^n+24+26+46$ if $\mathrm{cube}$ participates in the $T$-sum. 		
	\end{lem}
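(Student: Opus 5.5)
The plan follows the pattern of the preceding lemmas on $K^{2,0}_{4,3}$, $N$, $Q$, $AW_8$ and $\Gamma_1$. By Lemma~\ref{lemma5.1}, any $H\in\mathcal{H}$ in which $\mathrm{cube}$ participates is built from $\mathrm{cube}$ by repeated $T$-sums. The other summands are drawn from $K_4$, $K_{3,3}$, $\Gamma_1$, $K^{2,0}_{4,3}$, $K^{3,0}_{4,3}$, $\mathrm{cube}$, $N$, $Q$, $AW_8$. Since $H$ is nonplanar and $\mathrm{cube}$ is planar, some nonplanar ingredient must enter. Lemma~\ref{lemma5.4} excludes $N$, $Q$ and $AW_8$. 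Lemma~\ref{lemma5.3} and the $\Gamma_1$ lemma show that sums involving $K^{2,0}_{4,3}$, $K^{3,0}_{4,3}$ or $\Gamma_1$ stay inside minors of $K^{3',1}_{4,3}$ or $K^{3,1}_{4,3}$; in particular they admit no $T$-sum with $\mathrm{cube}$. So the real work is $T$-sums of $\mathrm{cube}$ with $K_4$, $K_{3,3}$, and further copies of $\mathrm{cube}$. All vertices of $\mathrm{cube}$ are equivalent, so we may fix the cubic vertex used to be the one with neighbourhood $\{2,4,6\}$ in the paper's labelling, with antipode labelled so that $3$ is a vertex at distance two.

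\emph{Step 1.} Treat the $T_3$-sum of $\mathrm{cube}$ with $K_{3,3}$. This is the only way to make the graph nonplanar at the first step. Concretely, it replaces the cubic vertex by a copy of $K_{3,3}$ minus a vertex glued onto $\{2,4,6\}$. The result is $\mathrm{cube}$ with the cubic vertex kept and one extra cubic vertex $x$ adjacent to $\{2,4,6\}$, which is $\mathrm{cube}^1$. I would verify that this is $W_6$-minor-free by the same counting argument as in Lemma~\ref{lemma4.7}: a vertex of degree $6$ must be created by contractions, and one checks that its link contains no $C_6$. The $T_0$-, $T_1$- and $T_2$-sums with $K_{3,3}$ would each be exhibited as containing $W_6$, via an explicit minor model as in Figure~\ref{fig13}. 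Iterating $T_3$-sums with $K_{3,3}$ at a new cubic vertex $x$ (neighbourhood $\{2,4,6\}$) or at the old one just adds further twins. This gives $\mathrm{cube}^n$, by the same induction as in Lemma~\ref{lem3.3}. For $W_6$-freeness of $\mathrm{cube}^n$, use the twin argument from the Claim in Lemma~\ref{lem3.3}: two degree-three twins cannot both lie on the rim of a $W_6$ model, so one can be deleted, reducing to $n=1$. Sums at other cubic vertices of $\mathrm{cube}^1$ (e.g.\ at $3$ or $5$) and $T_3$-sums with another $\mathrm{cube}$ should be shown to contain $W_6$ by explicit models.

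\emph{Step 2.} Handle $T$-sums with $K_4$. A $T_1$-sum is expected to contain $W_6$ as before. A $T_2$-sum adds one edge among the neighbours of a cubic vertex. For the twin class with neighbourhood $\{2,4,6\}$, the admissible edges are among $24,26,46$, and I expect all three can be added, giving $\mathrm{cube}^n+24+26+46$. For the vertex $3$, adding an edge $3x$ (where $x$ is the extra twin) corresponds to a $T_2$-sum at a neighbouring cubic vertex. I would enumerate, up to the symmetry of $\mathrm{cube}^n$, every graph $\mathrm{cube}^n+F$ obtained from $T_2$-sums. This is a finite check since the twins are interchangeable, and in each case I would show it is a minor of one of the two maximal graphs or contains $W_6$. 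In particular I would show that $\mathrm{cube}^2+3x$ and $\mathrm{cube}^1+3x+24$ contain $W_6$, which forces the dichotomy in the statement.

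The main obstacle is Step 2's mixed cases, where twin edges $24,26,46$ are combined with edges like $3x$ for $n\ge 1$. There one needs both a clean argument that $\mathrm{cube}^n+24+26+46$ is $W_6$-minor-free for every $n$, and enough explicit $W_6$ models to kill every other combination. For the first, I would again delete all but one twin, after checking that twins of degree $3$ whose neighbourhood is a triangle cannot both be used as hub-adjacent rim vertices. Then $\mathrm{cube}^1+24+26+46$ is verified directly, by the degree-$6$ contraction analysis used for $Q$ and $AW_8$ in Lemma~\ref{lemma4.7}.
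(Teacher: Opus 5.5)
\textbf{Gap: you only sum atomic pieces onto $\mathrm{cube}$ one at a time, and $T$-sums cannot be reordered that way.} At the start and in Steps 1--2 you assume $H$ is built by taking $\mathrm{cube}$ and attaching $K_4$ or $K_{3,3}$ at cubic vertices of the current graph. In Step 2 you then enumerate only graphs $\mathrm{cube}^n+F$ whose extra edges come from $T_2$-sums with $K_4$ at that stage. This fails because the contracted matching edges raise the degrees of the attachment vertices.

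In the paper's labelling, $3$ is the cube vertex with $N(3)=\{2,4,6\}$, and $x$ is the twin created by the $T_3$-sum with $K_{3,3}$. After that sum, $2,4,6$ have degree $4$, and in your scheme their degrees only grow. So $3$ and $x$ never have a common cubic neighbour, and no $T_2$-sum with $K_4$ ever creates the edge $3x$.

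Yet $\mathrm{cube}^1+3x$ is built from the allowed pieces in another order. The $T_2$-sum of $K_{3,3}$ with $K_4$ is $K^{1,0}_{3,3}$. Its $T_3$-sum with $\mathrm{cube}$, taken at the vertex of $X_1$ not incident with the added edge, attaches two adjacent twins to $\{2,4,6\}$. So your enumeration never reaches one of the two maximal graphs in the statement.

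The paper flags exactly this (``the sequence of $T$-sums involving $K_{3,3}$ may already have advanced''). It repairs it by considering \emph{all} edge additions to $\mathrm{cube}^1$, and then to $\mathrm{cube}^n$, discarding those with a $V_8$ or $W_6$ minor, rather than only edges realisable by a $T_2$-sum at the current stage. Equivalently, you must allow $\mathrm{cube}$ to be summed with composite graphs such as $K^{1,0}_{3,3}$, not just with $K_4$ and $K_{3,3}$.

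\textbf{Your reading of the label $3$ is also wrong.} You take $3$ to be at distance two from the twins, and say $3x$ ``corresponds to a $T_2$-sum at a neighbouring cubic vertex''. But $x$ has no cubic neighbour at all, since $N(x)=\{2,4,6\}$ and all three have degree $4$. Moreover, under your reading the graph contains $W_6$:
\begin{itemize}
\item Let $u$ be at distance two from the twins, with $N(u)=\{a,b,8\}$, where $\{a,b,c\}=\{2,4,6\}$ and $8$ is the antipode of $3$.
\item Let $u',u''$ be the other two distance-two vertices, with $u'\sim b,c$ and $u''\sim a,c$.
\item In $\mathrm{cube}^1+ux$, contract $xc$ and $u8$.
\item The hub $\{x,c\}$ is then adjacent to $u,u',b,3,a,u''$, and these form the rim $u\,u'\,b\,3\,a\,u''$.
\end{itemize}
So the graph ``$\mathrm{cube}^1+3x$'' that your Step 2 aims at is not the statement's graph.

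Two smaller differences are not gaps. The paper disposes of $T$-sums of $\mathrm{cube}$ with $\mathrm{cube}$ via Gubser's bound (the $T_3$-sum is planar with $18$ edges) instead of explicit models. It also reduces $\mathrm{cube}^n+24+26+46$ to the core $\mathrm{cube}+24+26+46$ rather than to $n=1$.
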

	
	\begin{figure}[H]
	\input{cube_sum.tpx}
	\end{figure}

	\begin{proof}
	All cubic vertices of $\mathrm{cube}$ are structurally equivalent. The $T_3$-sum of $\mathrm{cube}$ with itself is a planar graph with $18$ edges; by Lemma~\ref{lemma4.6}, it contains a $W_6$ minor. Thus no new $W_6$-minor-free graph arises from this case. 
	
	We next consider $T$-sums of $\mathrm{cube}$ with $K_4$ or $K_{3,3}$. Since only nonplanar graphs are relevant here, a nonplanar summand must occur; hence it suffices to consider the cases involving $K_{3,3}$. The essential cases are listed in Table~\ref{table5}. In the table, the notation $3(1,2,4,5,6,7,8)$ means that vertex $3$ is structurally equivalent to vertices $1,2,4,5,6,7,8$. Hence it is enough to use vertex $3$ as a representative.
	
	\begin{table}[H]
		\centering
		\caption{$T$-sum of $\mathrm{cube}$}
		\label{table5}
		\begin{tabular}{|c|c|c|c|}
			\hline
			\textbf{Graphs} & \textbf{Vertex} & \textbf{Number of Contracted Edges} & \textbf{Result} \\ \hline
			\multirow{2}{*}{{$\mathrm{cube}$ with $K_{3,3}$} } & \multirow{2}{*}{3(1,2,4,5,6,7,8) } & $0,1,2$ & $\succeq W_6$ \\ \cline{3-4}
			&  & 3 & $\mathrm{cube}^1$ \\ \hline
		\end{tabular}
	\end{table}
	
	Because the sequence of $T$-sums involving $K_{3,3}$ may already have advanced, some edges cannot be produced at this stage solely by a $T_2$-sum of $\mathrm{cube}^1$ with $K_4$. We therefore consider all possible edge additions and exclude those that contain a $V_8$ minor. Up to isomorphism, the possible graphs obtained in this way are: $\mathrm{cube}^1+3x$, $\mathrm{cube}^1+24$, $\mathrm{cube}^1+24+26$, and $\mathrm{cube}^1+24+26+46$. The last three graphs arise directly from $T$-sums and are $V_8$-minor-free. The graph $\mathrm{cube}^1+3x$ is also $V_8$-minor-free by Lemma~\ref{lemma3.4}. Indeed, a $T_2$-sum of $K_4$ with $K_{3,3}$ gives $K^{1,0}_{3,3}$, and a subsequent $T_3$-sum of $K^{1,0}_{3,3}$ with $\mathrm{cube}$ gives $\mathrm{cube}^1+3x$.
	
	\begin{table}[H]
		\centering
		\caption{$T$-sum of $\mathrm{cube}^1$}
		\begin{tabular}{|c|c|c|c|}
			\hline
			\textbf{Graphs} & \textbf{Vertex} & \textbf{Number of Contracted Edges} & \textbf{Result} \\ \hline
			
			\multirow{5}{*}{$\mathrm{cube}^1$ with $K_4$} & \multirow{2}{*}{1(5,7)} & $0,1$ & $\succeq W_6$  \\ \cline{3-4}
			&  & 2 & $\mathrm{cube}^1+24$\\ \cline{2-4}
			&\multirow{1}{*}{8 } & 0,1,2 & $\succeq W_6$  \\ \cline{2-4}
			
			&\multirow{2}{*}{3($x$) } & $0,1$ & $\succeq W_6$  \\ \cline{3-4}
			&  & 2 & $\mathrm{cube}^1+24(26,46)$ \\ \cline{2-4}	\hline	
			
			\multirow{4}{*}{$\mathrm{cube}^1$ with $K_{3,3}$}
			&\multirow{1}{*}{1(5,7) } & $0,1,2,3$& $\succeq W_6$  \\ \cline{2-4}
			&\multirow{1}{*}{8} & $0,1,2,3$ & $\succeq W_6$  \\ \cline{2-4}
			
			&\multirow{2}{*}{3($x$) } & $0,1,2$ & $\succeq W_6$  \\ \cline{3-4}
			&  & 3 & $\mathrm{cube}^2$ \\ \cline{2-4}	    \hline
			
		\end{tabular}
	\end{table}
	
	Similarly, we obtain graphs of order 10: $\mathrm{cube}^2+24$, $\mathrm{cube}^2+24+26$, and $\mathrm{cube}^2+24+26+46$. Continuing inductively, all possible graphs in this branch are $\mathrm{cube}^n+24$, $\mathrm{cube}^n+24+26$, and $\mathrm{cube}^n+24+26+46$. Among these, the maximal graph is $\mathrm{cube}^n+24+26+46$. To prove that it is $W_6$-minor-free, it suffices, by the reduction used in Lemma~\ref{lem3.3}, to check the core graph $\mathrm{cube}+24+26+46$. Contracting any edge of this core graph does not produce a vertex of degree $6$; hence the core graph, and therefore $\mathrm{cube}^n+24+26+46$, is $W_6$-minor-free.
	\end{proof}

	Only the cases involving $K_4$ and $K_{3,3}$ remain.
	
	\begin{lem}
	Up to isomorphism, the graphs obtained by one $T$-sum involving $K_4$ with $K_{3,3}$ are $\mathrm{Prism}$, $W_4$, $K_4$, $H_1$, $H_2$, $H_3$, $K^{1,0}_{3,3}$, and $K_{4,3}$.
	\end{lem}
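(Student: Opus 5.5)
The statement is a finite enumeration, so I would prove it by listing every single $T$-sum whose two summands are $K_4$ or $K_{3,3}$ and sorting the results up to isomorphism. There are three pairings: $K_4$ with $K_4$, $K_4$ with $K_{3,3}$, and $K_{3,3}$ with $K_{3,3}$. For each pairing, the parameters are the number $i\in\{0,1,2,3\}$ of contracted matching edges and the choice of matching between $N_{G_1}(x)$ and $N_{G_2}(y)$. The symmetry groups of both graphs act transitively on vertices, and the stabilizer of a vertex acts as the full symmetric group on its three neighbours. Hence, for fixed $i$, all matchings and all choices of which $i$ matching edges to contract give isomorphic results. The whole enumeration therefore has at most twelve cases.

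I would then compute each case directly.

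For $K_4$ with $K_4$:
\begin{itemize}
\item $i=0$ gives the prism.
\item $i=1$ gives $W_4$: the contracted vertex is adjacent to everything, and the other four vertices form a $4$-cycle.
\item $i=2$ and $i=3$ collapse, after simplification, to $K_4$ (possibly with parallel edges removed).
\end{itemize}

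For $K_4$ with $K_{3,3}$:
\begin{itemize}
\item $i=3$ gives $K_{3,3}$ plus one edge inside the side containing the matched neighbours, that is, $K^{1,0}_{3,3}$.
\item $i=2$ gives the same graph $K^{1,0}_{3,3}$.
\item $i=0$ gives a graph on $8$ vertices.
\item $i=1$ gives a graph on $7$ vertices.
\end{itemize}

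For $K_{3,3}$ with $K_{3,3}$:
\begin{itemize}
\item $i=3$ gives $K_{4,3}$, since the two remaining pairs of outer vertices all attach to the three merged vertices.
\item $i=0,1,2$ give the remaining graphs.
\end{itemize}

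The graphs from these last cases (the $K_4$ with $K_{3,3}$ cases $i=0,1$ and the $K_{3,3}$ with $K_{3,3}$ cases $i=0,1,2$) that are not yet named would be identified with $H_1$, $H_2$, $H_3$ as drawn in the paper's figure. I would compare degree sequences, orders and edge counts to match each one with its drawing, and note any coincidences. For instance, $K_{3,3}\oplus_{T_0}K_{3,3}$ and the $K_4$ case with $i=0$ have different orders, so they cannot coincide.

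The main obstacle is the bookkeeping for simplification when $i\ge 2$. Contracting matching edges can merge vertices that were already adjacent on one side, and this creates parallel edges. These must be deleted correctly to decide whether, for example, the $K_4$ with $K_{3,3}$ case $i=2$ really equals $K^{1,0}_{3,3}$ or some other graph. I would handle this by writing the edge set explicitly with labels. A second, smaller point is to confirm that the list of eight graphs has no duplicates and no omissions; I would check pairwise non-isomorphism using order, size and degree sequence. I would also remark that sums with $i$ small, such as the $K_4$ with $K_4$ prism case, are planar but must still be listed, because later $T$-sums may produce nonplanar graphs from them.
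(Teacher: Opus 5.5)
Your symmetry reduction to one case per pair of summands and per value of $i$ is sound, and it is the same enumeration the paper records in Table~\ref{table7}. However, your case computations and your final matching step do not go through, because you never use the vertex count. A $T_i$-sum of $G_1$ with $G_2$ has $|V(G_1)|+|V(G_2)|-2-i$ vertices. With that count, your $i=3$ entries are wrong:
\begin{itemize}
\item The $T_3$-sum of $K_4$ with $K_4$ is $K_3$, not $K_4$.
\item The $T_3$-sum of $K_4$ with $K_{3,3}$ has five vertices. It is $K_{2,3}$ plus a triangle on the side of size three, i.e.\ $K_5\setminus e$. Only the $T_2$-sum gives $K^{1,0}_{3,3}$.
\end{itemize}
Neither of these graphs is in the stated list. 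The paper excludes them by its convention that at most two matching edges are contracted when one summand is $K_4$. This is asserted in the proof of Lemma~5.2, and Table~\ref{table7} lists only $i=0,1,2$ for $K_4$. You must invoke that convention explicitly. Nothing else removes $K_5\setminus e$: it is $3$-connected and $W_6$-minor-free, and planarity is not the filter either, since the list contains the prism.

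The second problem is more serious. For $K_{3,3}$ with $K_{3,3}$, the cases $i=0,1,2$ give graphs with $10$, $9$, $8$ vertices and $15$, $14$, $13$ edges. The $T_0$- and $T_1$-sums of $K_4$ with $K_{3,3}$ have $8$ vertices and $12$ edges, and $7$ vertices and $11$ edges. That is five pairwise nonisomorphic graphs, so your plan to match them with the three names $H_1,H_2,H_3$ by degree sequences cannot succeed. Read literally, the statement is false.

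The missing idea is that this lemma is part of the enumeration of $\mathcal{H}$, which consists of $W_6$-minor-free graphs. The $T_0$- and $T_1$-sums of $K_{3,3}$ with itself are discarded because they contain $W_6$; this is the entry $\succeq W_6$ in Table~\ref{table7}. You have to prove this exclusion, for example as follows. Give the two copies of $K_{3,3}$ bipartitions $\{x,p,q\},\{a,b,c\}$ and $\{y,p',q'\},\{a',b',c'\}$, and form the $T_1$-sum by contracting $aa'$. Now contract $pa$ and $ap'$. The resulting vertex is adjacent to $b,c,q,q',b',c'$, and these six vertices carry the cycle $q\,b\,b'\,q'\,c'\,c\,q$, so $W_6$ is a minor. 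The $T_0$-sum contracts onto the $T_1$-sum, so it contains $W_6$ as well.

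With both exclusions in place, $H_1$ and $H_2$ are the $T_0$- and $T_1$-sums of $K_{3,3}$ with $K_4$, and $H_3$ is the $T_2$-sum of $K_{3,3}$ with itself. You should still confirm that $H_3$ survives the filter. Reaching $W_6$ from it needs exactly one contraction, and the two degree-$4$ vertices of $H_3$ are nonadjacent. So no single contraction creates a vertex of degree $6$, and $H_3$ is $W_6$-minor-free.
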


	\begin{figure}[H]
	\input{first.tpx}
	\end{figure}

	\begin{proof}
	The enumeration is given in Table~\ref{table7}.
	\end{proof}

	\begin{table}[H]
	\centering
	\caption{$T$-sum of $K_4$ with $K_{3,3}$}
	\label{table7}
	\begin{tabular}{|c|c|c|}
		\hline
		\textbf{Graphs}  & \textbf{Number of Contracted Edges} & \textbf{Result} \\ \hline
		
		\multirow{3}{*}{$K_4$ with $K_4$} & $0$ & $\mathrm{Prism}$ \\ \cline{2-3}
		&1 & $W_4$ \\ \cline{2-3}
		&2 &  $K_4$  \\ \hline

		\multirow{3}{*}{$K_{3,3}$ with $K_4$} & $0$ & $H_1$ \\ \cline{2-3}
		&1 &  $H_2$ \\ \cline{2-3}
		& 2&  $K^{1,0}_{3,3}$ \\ \hline
		
		\multirow{3}{*}{$K_{3,3}$ with $K_{3,3}$} & $0,1$ & $\succeq W_6$ \\\cline{2-3}
		&  2 & $H_3$ \\ \cline{2-3}
		&  3 & $K_{4,3}$ \\ \hline
	\end{tabular}
	
\end{table}

	Since we are interested only in nonplanar graphs, some step in the $T$-sum sequence must involve a nonplanar graph. Thus it remains to consider the branches arising from $H_1$, $H_2$, $H_3$, $K^{1,0}_{3,3}$, and $K_{4,3}$.

	For integers $k,j\geq 1$, let $G^k$ and $G^j$ be the graphs obtained from $G$ by adding, respectively, $k$ new cubic vertices adjacent to $\{3,4,5\}$ and $j$ new cubic vertices adjacent to $\{3,5,7\}$.

	\begin{lem}\label{lemma5.8}
	Let $H \in \mathcal{H}$. When $H_3$ participates in the $T$-sum, $H$ can be any of the following variants: $H_3$, $H_{3a}$, $H_{3b}$, $H^k_{3}$, $H^{k,j}_{3}$, $H^k_{3a}$, $H^{j}_{3a}$, $H^{k,j}_{3a}$, and $H^k_{3b}$, possibly with additional edges among the existing vertices, subject to the condition that the resulting graph remains $W_6$-minor-free.
	\end{lem}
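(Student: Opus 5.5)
We follow the scheme of Lemmas~\ref{lemma5.6} and~\ref{lem3.3}: start from $H_3$, enumerate the $T$-sums of the current graph with $K_4$ and with $K_{3,3}$ at each cubic vertex, and discard every outcome that contains a $W_6$ minor. By Lemma~\ref{lemma5.3}, Lemma~\ref{lemma5.4}, the lemma on $\Gamma_1$ and Lemma~\ref{lemma5.6}, any branch involving $K^{2,0}_{4,3}$, $K^{3,0}_{4,3}$, $\Gamma_1$, $\mathrm{cube}$, $N$, $Q$ or $AW_8$ has already been handled. So only $T$-sums with $K_4$ and $K_{3,3}$ remain.

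\textbf{Step 1: classify the cubic vertices of $H_3$.} First we partition the cubic vertices of $H_3$ into orbits under its automorphism group. For a representative of each orbit, we compute the $T_i$-sums ($i=0,1,2,3$) with $K_{3,3}$ and with $K_4$ and record the results in a table like Table~\ref{table5}. The expected pattern is the following.
\begin{itemize}
\item $T_0$- and $T_1$-sums create a large new 3-connected piece and yield $W_6$, exhibited by explicit contractions.
\item A $T_3$-sum with $K_{3,3}$ at a vertex whose neighbourhood is $\{3,4,5\}$ or $\{3,5,7\}$ adds one new cubic vertex with that neighbourhood, giving $H^k_3$ or $H^j_3$.
\item Certain $T_2$-sums with $K_4$, or $T_3$-sums at specific vertices, produce the two genuinely new graphs $H_{3a}$ and $H_{3b}$.
\item A $T_2$-sum with $K_4$ only adds an edge among existing vertices.
\end{itemize}

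\textbf{Step 2: repeat for $H_{3a}$ and $H_{3b}$.} We apply the same enumeration to $H_{3a}$ and $H_{3b}$. This shows that further sums either create $W_6$ or again only attach cubic vertices on $\{3,4,5\}$ or $\{3,5,7\}$, or add edges. For $H_{3b}$, only the $\{3,4,5\}$ attachment should survive, which matches the list in the statement.

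\textbf{Step 3: induction on the number of attached cubic vertices.} As in Lemma~\ref{lem3.3}, we induct on the number of attached cubic vertices. New cubic vertices sharing a neighbourhood are pairwise nonadjacent and interchangeable, and a cubic vertex cannot be split further. Hence a $T$-sum at an old vertex reduces to a configuration already examined in Steps~1--2. A $T$-sum at a new cubic vertex $w$ is the same as performing the sum at an old twin of $w$, so it again gives $H^{k+1}$ or $H^{j+1}$ or $W_6$.

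\textbf{Step 4: account for added edges.} Since earlier $T_2$-sums with $K_4$ may already have added edges, we adopt the stance of Lemma~\ref{lemma5.6}. We allow arbitrary additional edges among existing vertices and keep only those graphs that remain $W_6$-minor-free, together with $V_8$-minor-free by Lemma~\ref{lemma3.4}. This is exactly the qualifier in the statement, and the precise maximal members are deferred to the later lemmas that feed Theorem~\ref{thm1.2}.

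The main obstacle is Step~1 and its repetition in Step~2: the exhaustive case check over vertex orbits and the number of contracted matching edges, with an explicit $W_6$ model supplied for each excluded case. To keep this finite, I would use the observation from Lemma~\ref{lem3.3} that a $W_6$ minor survives adding edges. Then only edge-minimal outcomes need to be tested, and a single minor model covers all of their supergraphs.
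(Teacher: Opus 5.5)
Your overall route is the paper's: enumerate, for orbit representatives of the cubic vertices of $H_3$ (the paper uses $2(1)$, $6(8)$, $7(4)$), the $T_i$-sums with $K_4$ and $K_{3,3}$, and discard those containing $W_6$. Then repeat for $H_{3a}$ and $H_{3b}$, pass to the twin families by symmetry, and allow arbitrary extra edges subject to excluding $V_8$.

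The place where your proposal is wrong is the outcome pattern you predict in Step~1, and it concerns exactly the two new graphs the lemma is about.

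\textbf{What a $T_1$-sum with $K_4$ actually does.} Take a cubic vertex $v$ with $N(v)=\{u_1,u_2,u_3\}$. The $T_1$-sum with $K_4$ does not attach a large new piece. It replaces $v$ by two adjacent cubic vertices, both joined to $u_1$, one joined to $u_2$ and the other to $u_3$, so the order grows by only one.

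\textbf{Where $H_{3a}$ and $H_{3b}$ come from.} In the paper's enumeration these $T_1$-sums are precisely what produce $H_{3a}$ (at $2(1)$ or $6(8)$) and $H_{3b}$ (at $7(4)$). For $H_3$, the $W_6$-containing outcomes are:
\begin{itemize}
\item the $T_0$-sums with $K_4$;
\item the $T_0$-, $T_1$- and $T_2$-sums with $K_{3,3}$;
\item every sum with $K_{3,3}$ at $7(4)$.
\end{itemize}

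\textbf{Why your bullets cannot be right.} A $T_2$-sum with $K_4$ just adds an edge between two neighbours of $v$, as your own fourth bullet says. A $T_3$-sum with $K_{3,3}$ just adds a twin of $v$. So neither can produce $H_{3a}$ or $H_{3b}$, and your third bullet contradicts the fourth. If this forecast were used to prune the search, treating $T_1$-sums as $W_6$-containing, both $H_{3a}$ and $H_{3b}$ would be lost. Since you do propose computing all $i=0,\dots,3$, the enumeration itself would correct this, but the stated pattern must be replaced by the real one.

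A minor point on Step~3: the fact that a cubic vertex cannot be split is beside the point here, since $T$-sums are performed precisely at cubic vertices. What carries the induction is the twin symmetry you state next, together with minor-monotonicity for the sums that already gave $W_6$ in $H_3$.
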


	\begin{figure}[H]
	\input{h3a.tpx}
	\end{figure}

	\begin{proof}
	By symmetry, the pairs ${1,2}$, ${6,8}$, and ${4,7}$ in $H_3$ are structurally equivalent. When $T$-sums involving $H_3$ are considered, it is enough to use $K_4$ and $K_{3,3}$ as the other summand. This is because all higher-order cases are obtained by repeated $T$-sums with these two graphs. The first round of $T$-sums produces the higher-order graphs $H_{3a}$, $H_{3b}$, and $H^1_3$. Additionally, we also obtain $H_3+e$. Here $e$ is not restricted in advance; instead, all possible edge additions are considered, and those containing a $V_8$ minor are excluded.
	
	\begin{table}[H]
		\centering
		\caption{$T$-sum of $H_3$}
		\begin{tabular}{|c|c|c|c|}
			\hline
			\textbf{Graphs} & \textbf{Vertex} & \textbf{Number of Contracted Edges} & \textbf{Result} \\ \hline
			
			\multirow{9}{*}{$H_3$ with $K_4$} & \multirow{3}{*}{$2(1)$ } & $0$ & $\succeq W_6$  \\ \cline{3-4}
			&  & $1$ & $H_{3a}$\\ \cline{3-4} &  & $2$ & $H_3+34(45), H_3+35$ \\ \cline{2-4}
			
			&\multirow{3}{*}{7(4) } & $0$ & $\succeq W_6$  \\ \cline{3-4}
			&	& $1$ & $H_{3b}$  \\ \cline{3-4}
			&  & $2$ & $H_3+35$, $H_3+35(37)$\\ \cline{2-4}
			&\multirow{3}{*}{$6(8)$ } & $0$ & $\succeq W_6$  \\ \cline{3-4}
			&  & $1$ & $H_{3a}$\\ \cline{3-4}	&  & $2$ & $H_3+34(45), H_3+35$ \\ \cline{1-4}
			
			\multirow{5}{*}{$H_3$ with  $K_{3,3}$} & \multirow{2}{*}{$2(1)$ } & $0,1,2$ & $\succeq W_6$  \\ \cline{3-4}
			&  & $3$ & $H^1_3$ \\ \cline{2-4}
			&\multirow{1}{*}{7(4)} & $0,1,2,3$& $\succeq W_6$  \\ \cline{2-4}
			&\multirow{2}{*}{$6(8)$ } & $0,1,2$ & $\succeq W_6$  \\ \cline{3-4}
			&  & $3$ & $H^{1'}_3$ \\ \cline{1-4}
			
		\end{tabular}
	\end{table}
	
	We now consider graphs of the form $H_3+e$. Since $H_3$ is $3$-connected, adding an edge can only reduce the number of essentially distinct vertices that need to be considered in subsequent $T$-sums. Thus every graph obtained from a further $T$-sum of $H_3+e$ is contained in one of $H_{3a}+e$, $H_{3b}+e$, $H^1_3+e$, $H^{1'}_3+e$, $H_3+e$, and $H_3+e'+e''$.
	
	Similarly, a $T$-sum involving $H_{3a}$ produces only $H^1_{3a}$, $H^{1'}_{3a}$, and $H_{3a}+e$. 
	
	\begin{table}[H]
		\centering
		\caption{$T$-sum of $H_{3a}$}
		\begin{tabular}{|c|c|c|c|}
			\hline
			\textbf{Graphs} & \textbf{Vertex} & \textbf{Number of Contracted Edges} & \textbf{Result} \\ \hline
			
			\multirow{7}{*}{$H_{3a}$ with $K_4$} & \multirow{2}{*}{ $1$ }
			& $0,1$ & $\succeq W_6$ \\ \cline{3-4}
			& & $2$ & $H_{3a}+34(45), H_{3a}+35$  \\  \cline{2-4}
			& \multirow{1}{*}{ $7$ }
			& $0,1,2$ & $\succeq W_6$ \\ \cline{2-4}
			& \multirow{2}{*}{ $x(z)$ }
			& $0,1$ & $\succeq W_6$ \\ \cline{3-4}
			& & $2$ & $H_{3a}+34$  \\  \cline{2-4}
			& \multirow{2}{*}{ $6(8)$ }
			& $0,1$ & $\succeq W_6$ \\ \cline{3-4}
			& & $2$ & $H_{3a}+35$,$H_{3a}+37$,$H_{3a}+57$  \\  \cline{1-4}
			
			\multirow{6}{*}{$H_{3a}$ with $K_{3,3}$ } & \multirow{2}{*}{ $1$ }
			& $0,1,2$ & $\succeq W_6$ \\ \cline{3-4}
			&	& $3$ &  $H^1_{3a}$   \\ \cline{2-4}
			& \multirow{1}{*}{ $7$ }
			& $0,1,2,3$ & $\succeq W_6$ \\ \cline{2-4}
			& \multirow{1}{*}{ $x(z)$ }
			& $0,1,2,3$ & $\succeq W_6$ \\ \cline{2-4}
			& \multirow{2}{*}{ $6(8)$ }
			& $0,1,2$ & $\succeq W_6$ \\ \cline{3-4}
			& & $3$ & $H^{1'}_{3a}$\\  \cline{1-4}
		\end{tabular}
	\end{table}
	
	And a $T$-sum involving $H_{3b}$ produces only $H^1_{3b}$ and $H_{3b}+e$.
	
	\begin{table}[H]
		\centering
		\caption{$T$-sum of $H_{3b}$}
		\begin{tabular}{|c|c|c|c|}
			\hline
			\textbf{Graphs} & \textbf{Vertex} & \textbf{Number of Contracted Edges} & \textbf{Result} \\ \hline
			
			\multirow{4}{*}{$H_{3b}$ with $K_4$} & \multirow{2}{*}{ $1(2)$ }
			& $0,1$ & $\succeq W_6$ \\ \cline{3-4}
			& & $2$ & $H_{3b}+34(45), H_{3b}+35$  \\  \cline{2-4}
			& \multirow{1}{*}{ $6(8)$ }
			& $0,1,2$ & $\succeq W_6$ \\ \cline{2-4}
			& \multirow{1}{*}{ $x(z)$ }
			& $0,1,2$ & $\succeq W_6$ \\ \cline{1-4}
			
			\multirow{4}{*}{$H_{3b}$ with $K_{3,3}$ } & \multirow{2}{*}{ $1(2)$ }
			& $0,1,2$ & $\succeq W_6$ \\ \cline{3-4}
			&	& $3$ &  $H^1_{3b}$   \\ \cline{2-4}
			& \multirow{1}{*}{ $6(8)$  }
			& $0,1,2,3$ & $\succeq W_6$ \\ \cline{2-4}
			& \multirow{1}{*}{ $x(z)$ }
			& $0,1,2,3$ & $\succeq W_6$ \\ \cline{1-4}
			
		\end{tabular}
	\end{table}

	The case $H_{3a}+e$ is analogous to that of $H_3+e$. Finally, applying the same analysis to $H^1_3$, $H^1_{3a}$, $H^{1'}_{3a}$, and $H^1{3b}$ gives precisely the families $H^k_3$, $H^j_3$, $H^{k,j}_3$, $H^k_{3a}$, $H^j_{3a}$, $H^{k,j}_{3a}$, and $H^k_{3b}$, together with their admissible edge additions.
	\end{proof}

	By Lemma~\ref{lemma5.8}, it remains to consider repeated edge additions to $H_3$, $H_3$, $H_{3a}$, $H_{3b}$, $H^k_{3}$, $H^{k,j}_{3}$, $H^k_{3a}$, $H^{j}_{3a}$, $H^{k,j}_{3a}$, and $H^k_{3b}$. Then, we determine which of these graphs are $W_6$-minor-free and identify the maximal graphs in each category.

	\begin{lem}
	All the following graphs are $W_6$-minor-free:
	
	(i) $H_3+34+35+37+45+57$, $H_3+34+35+45+46+68$, $H_3+12+35+68$, $H^{k}_3+34+35+37+45+57$, $H^{k}_3+34+35+45+46+68$,  $H^{k,j}_3+34+35+37+45+57$;
	
	(ii) $H_{3a}+34+35+37+45+57$, $H^{k}_{3a}+34+35+37+45+57$,  $H^{j}_{3a}+34+35+37+45+57$, $H^{k,j}_{3a}+34+35+37+45+57$;
	
	(iii) $H_{3b}+34+35+45+68$, $H^j_{3b}+34+35+45+68$.
	
	\end{lem}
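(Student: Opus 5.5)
The plan has two stages. First, reduce every family carrying cubic vertices ($H^k_3$, $H^{k,j}_3$, $H^k_{3a}$, $H^j_{3a}$, $H^{k,j}_{3a}$, $H^j_{3b}$) to its core graph. Second, verify the finitely many core graphs directly, as was done for $\mathrm{cube}+24+26+46$ in Lemma~\ref{lemma5.6} and for $V_8+13+16+36$ in the Claim of Lemma~\ref{lem3.3}.

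\textbf{Reduction.} The added vertices in each family are pairwise nonadjacent cubic vertices with a common neighbourhood: $\{3,4,5\}$ for the $k$-family and $\{3,5,7\}$ for the $j$-family. In every listed graph, the edges among that triple are already present ($34,35,45$, respectively $35,37,57$).

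Suppose $(B_v)$ is a $W_6$ model in such a graph. Let $w,w'$ be two cubic vertices with the same neighbourhood $S$. At most three branch sets meet $S$. A branch set consisting of $w$ alone would need degree $\ge 3$ into three distinct branch sets, so it would have to be a rim vertex adjacent to exactly the branch sets containing the three vertices of $S$.

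I would argue that a model can be rerouted so that it uses at most one vertex from each such twin class. Since $S$ induces a triangle, any path through $w$ between two vertices of $S$ can be replaced by the corresponding edge of $S$. So every $w$ that is not a singleton branch set can be deleted from the model. At most one twin can serve as a singleton branch set: if $w$ and $w'$ were both singleton rim vertices with identical neighbouring branch sets, then in $W_6$ two rim vertices would have the same three neighbours, which is impossible. This is the paper's remark that two degree-two vertices with common neighbours cannot lie on a $C_6$.

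Hence it suffices to check the graphs with $k,j\le 1$. In fact I expect one can go further and take $k=j=0$ whenever $S$ is already a triangle, by contracting the cubic vertex into a neighbour and absorbing it. That final point needs care, and I would settle it case by case.

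\textbf{Core graphs.} This leaves the base graphs $H_3+\cdots$, $H_{3a}+\cdots$, $H_{3b}+\cdots$ and at most their one-twin versions. For each, I would count vertices and edges. $W_6$ has $7$ vertices and $12$ edges, so a model arises by deleting $|V|-7$ vertices or contracting that many edges, with at most $|E|-12-(\text{contractions})$ surplus edges to delete. I would enumerate, up to the symmetries already recorded in the tables (pairs $\{1,2\}$, $\{6,8\}$, $\{4,7\}$, and $x,z$), the ways of producing a vertex of degree at least $6$. For each candidate hub $h$, I would check that $G'-h$ contains no $6$-cycle through six distinct neighbour classes of $h$. The usual obstruction is that every edge at the hub lies in a triangle, or that $G'-h$ has a vertex of degree at most one in the relevant subgraph. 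This parallels the $Q$ and $AW_8$ arguments in Lemma~\ref{lemma4.7}.

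\textbf{Main obstacle.} The main difficulty is the rigorous rerouting step for twins whose common neighbourhood is not a triangle. The most likely case is the $j$-twins in (iii), where $\{3,5,7\}$ need not be complete in $H_{3b}+34+35+45+68$. There I would fall back on the singleton argument alone, reducing to $j\le 1$. I would then verify $H^1_{3b}+34+35+45+68$ by the same exhaustive degree-$6$ search, which I expect to be carried out by computer as in Lemmas~\ref{lem3.2} and~\ref{lemma4.7}.
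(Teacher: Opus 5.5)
Your plan is the paper's: collapse each infinite family to a finite core by discarding twin cubic vertices, then verify the cores directly. The paper lists cores such as $H_{3a}\setminus v_6+34+35+37+45+57$ and $H_{3b}\setminus v_1+34+35+45+68$ and asserts they are $W_6$-minor-free. Your rerouting argument for a twin class whose common neighbourhood is a triangle is correct, and it is a more explicit version of the paper's one-line justification.

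\textbf{The gap: your fallback for (iii).} When $\{3,5,7\}$ is not complete, the singleton argument does not work. It bounds only the twins that are themselves branch sets. It says nothing about twins inside larger branch sets, so it gives no reduction to $j\le 1$. With only $35$ present, a model may use two twins at once:
\begin{itemize}
\item a twin $w_0$ as a singleton rim vertex, whose rim neighbours are the branch sets of $3$ and $5$ and whose hub is the branch set $B_7$ containing $7$;
\item a second twin $w_1\in B_7$, supplying both spokes from $B_7$ to the branch sets of $3$ and $5$.
\end{itemize}
In general neither can be discarded. What is true is this: because $35$ is an edge, a single non-singleton twin placed in the branch set containing $7$ does everything non-singleton twins can do. So every model can be rerouted to use at most two twins, and you would have to check the two-twin graph, not the one-twin graph.

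Most likely the issue does not arise at all. Lemma~\ref{lemma5.8} lists this family as $H^k_{3b}$, and the paper's core for (iii) deletes a copy of vertex $1$. The $H_{3b}$ table, where $T_2$-sums with $K_4$ at vertex $1$ add $34$, $35$, $45$, shows $N(1)=\{3,4,5\}$. That set is a triangle once $34,35,45$ are added, so your clean argument applies.

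\textbf{Your aside about reaching $k=j=0$.} The conclusion is right but the reason is wrong. Contracting a twin $w$ with triangle neighbourhood into a neighbour $a$ gives $G/wa\cong G-w$. That is a minor of $G$, so it cannot certify that $G$ is $W_6$-minor-free.

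The real reason is that the base graphs already contain members of the twin classes. $H_3$, the $T_2$-sum of two copies of $K_{3,3}$, has twin pairs $\{1,2\}$ with neighbourhood $\{3,4,5\}$ and $\{6,8\}$ with neighbourhood $\{3,5,7\}$. So the added vertices are twins of existing ones, and your one-twin argument reduces each family to one vertex per class. This is what the paper's cores $H_3\setminus\{v_1,v_6\}+\cdots$ express.

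It also settles some cases with no search. In $H^{k,j}_3+34+35+37+45+57$, a $W_6$ model could use only $3,4,5,7$ plus at most one vertex from each of the two classes. That is at most six vertices, while a $W_6$ model needs seven.
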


	\begin{proof}
	It suffices to check the following graphs:	$H_3\setminus \{v_1, v_6\}+34+35+37+45+57$, $H_3\setminus \{v_1, v_6\}+34+35+45+46+68$, $H_3+12+35+68$, $H_{3a}\setminus v_6+34+35+37+45+57$, $H_{3b}\setminus v_1+34+35+45+68$. All of them are $W_6$-minor-free. 
	
	We must also exclude the possibility that the relevant edge additions create a $V_8$ minor, as required by Lemma~\ref{lemma3.4}. For example, $H_3+16$ is isomorphic to $V_8+13+46$. The graphs listed above have been checked to be $V_8$-minor-free.
	\end{proof}

	\begin{lem}\label{lemma5.10}
	Let $H \in \mathcal{H}$. $H$ is a minor of one of $H_3+12+35+68$, $H^{k,j}_{3a}+34+35+37+45+57$, or $H^{j}_{3b}+34+35+45+68$ if $H_3$ participates in the $T$-sum.		
	\end{lem}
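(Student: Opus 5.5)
The plan is to combine Lemma~\ref{lemma5.8} with the preceding lemma and then reduce by maximality. By Lemma~\ref{lemma5.8}, every $H\in\mathcal{H}$ arising from a $T$-sum sequence in which $H_3$ participates is one of $H_3$, $H_{3a}$, $H_{3b}$, $H^k_3$, $H^{k,j}_3$, $H^k_{3a}$, $H^j_{3a}$, $H^{k,j}_{3a}$, $H^k_{3b}$, possibly with extra edges among existing vertices. So it suffices to show that every $W_6$-minor-free and $V_8$-minor-free edge augmentation of these families is a subgraph, hence a minor, of one of the three listed graphs. Contracting or deleting the added cubic vertices (contracting a cubic vertex into a neighbour, or simply deleting it) passes from $H^{k,j}$ to smaller members of the same family. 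Therefore the three targets are maximal across all members.

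First, I would establish the containments among the families. I expect $H_3$ to be a minor of $H_{3a}$, obtained by contracting the edge created by the $T_1$-sum, and $H^{k,j}_3$ to be a minor of $H^{k,j}_{3a}$ in the same way, since the new cubic vertices attach to $\{3,4,5\}$ and $\{3,5,7\}$, which are unaffected. Then the augmentations $H_3+34+35+37+45+57$ and $H^{k,j}_3+34+35+37+45+57$ lie inside $H^{k,j}_{3a}+34+35+37+45+57$. Likewise, $H^k_3+34+35+45+46+68$ should be a minor of $H^j_{3b}+34+35+45+68$, after relabelling the cubic-vertex neighbourhood via the symmetry swapping the pairs $\{4,7\}$ and $\{6,8\}$ used in Table entries. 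This leaves three genuinely maximal objects: $H_3+12+35+68$ (which uses the edge $12$ and so is not in the other branches), $H^{k,j}_{3a}+34+35+37+45+57$, and $H^j_{3b}+34+35+45+68$.

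Second, I would prove maximality of the edge sets, which is the core exhaustive step. For each base graph ($H_3$, $H_{3a}$, $H_{3b}$, stripped of the extra cubic vertices as in the preceding proof), I enumerate the nonedges up to the automorphisms recorded in the tables. For each nonedge, I show that adding it either creates a $W_6$ minor, which one exhibits by a contraction producing a degree-6 hub over a $6$-cycle, or creates a $V_8$ minor and so falls under Section~3 instead (for example $H_3+16\cong V_8+13+46$). Then every admissible edge set is contained in one of the edge sets $\{34,35,37,45,57\}$, $\{34,35,45,46,68\}$, $\{12,35,68\}$ (for $H_3$), $\{34,35,37,45,57\}$ (for $H_{3a}$), and $\{34,35,45,68\}$ (for $H_{3b}$). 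By the reduction argument of Lemma~\ref{lem3.3}, duplicated cubic vertices with identical neighbourhoods cannot help form the rim $C_6$. Hence this check on the core graphs transfers to all $k,j$, and it also shows which edges remain compatible with adding cubic vertices.

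The main obstacle is this second step: one must verify that no edge outside the listed sets is compatible, and in particular handle combinations. An edge may be individually admissible but forbidden alongside others, so there is a risk of missing a maximal set that is incomparable with the listed ones. I would attack this by a computer-assisted closure. Starting from each base graph, I would grow edge sets one edge at a time, pruning any set containing a $W_6$ or $V_8$ minor, and record the inclusion-maximal survivors. The expected outcome is that these survivors are exactly the three stated graphs, up to the isomorphisms above.
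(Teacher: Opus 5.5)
Your proposal follows the paper. The paper's proof of Lemma~\ref{lemma5.10} consists exactly of your first step: the two containments $H^{k,j}_{3}+34+35+37+45+57\preceq H^{k,j}_{3a}+34+35+37+45+57$ and $H^{k}_3+34+35+45+46+68\preceq H^{j}_{3b}+34+35+45+68$, applied to the families from Lemma~\ref{lemma5.8}. Your second step, the exhaustive edge-addition and maximality check reduced to core graphs, is the same computation the paper relies on from Lemma~\ref{lemma5.8} and the preceding lemma listing the $W_6$-minor-free augmentations, only written out explicitly rather than cited.
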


	\begin{proof}
	Note that  $H^{k,j}_{3}+34+35+37+45+57\preceq H^{k,j}_{3a}+34+35+37+45+57$, $H^{k}_3+34+35+45+46+68\preceq H^{j}_{3b}+34+35+45+68$.
	\end{proof}

	The analyses for $H_1$, $H_2$, $K^{1,0}_{3,3}$, and $K_{4,3}$ are similar to that for $H_3$. We therefore state the conclusions here; the corresponding case checks are given in the Appendix.

	For each integer $l\geq 1$, let $G^l$ be the graph obtained from $G$ by adding $l$ new cubic vertices, each adjacent precisely to $\{1,2,6\}$.
	
	\begin{lem}
	Let $H \in \mathcal{H}$. $H$ is a minor of one of $H_1+16+27+35$, $H_1+12+34+37+46$, $H_1+12+37+48+56$, $H_1+16+34+35+46$, $H_{1a}+12+18+34$, $H_{1a1}$, $H_{1b}+12+34+3x$, $H^k_{1b}+34+35+38+45+56$, $H^k_{1b1}+34+35+45+5z$, or $H^l_1+12+16+26+45$ if $H_1$ participates in the $T$-sum.		
	\end{lem}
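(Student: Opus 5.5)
The proof will follow the same scheme as Lemma~\ref{lemma5.8} and Lemma~\ref{lemma5.10} for $H_3$, now starting from $H_1$, the $T_0$-sum of $K_{3,3}$ with $K_4$. By Lemma~\ref{lemma5.1} and Lemmas~\ref{lemma5.3}--\ref{lemma5.6}, any further summand is $K_4$ or $K_{3,3}$. Summands such as $K^{2,0}_{4,3}$, $K^{3,0}_{4,3}$, $\Gamma_1$, $\mathrm{cube}$, $N$, $Q$ and $AW_8$ either force $W_6$ or are already absorbed in the families handled there. So it suffices to iterate $T_i$-sums with $K_4$ and $K_{3,3}$ at every cubic vertex of $H_1$ and its descendants. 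Since adding edges preserves minors, graphs that already contain $W_6$ can be discarded at once.

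\textbf{First round.} I will fix the labeling of $H_1$ and partition its cubic vertices into automorphism classes. For each representative $v$, I form:
\begin{itemize}
\item the $T_i$-sums with $K_4$ for $i=0,1,2$;
\item the $T_i$-sums with $K_{3,3}$ for $i=0,1,2,3$.
\end{itemize}
Each result is recorded in a table, as for $H_3$. The $T_2$-sums with $K_4$ only add an edge among $N(v)$, giving graphs $H_1+e$. The $T_1$-sums with $K_4$ should give the new higher-order graphs $H_{1a}$ and $H_{1b}$. The $T_3$-sums with $K_{3,3}$ should give $H^l_1$ (a cubic vertex at $\{1,2,6\}$) together with the derived graphs $H_{1a1}$ and $H_{1b1}$. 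All remaining combinations are shown to contain $W_6$ by exhibiting a contraction.

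\textbf{Iteration.} Next I repeat the procedure on $H_{1a}$, $H_{1b}$, $H^l_1$, $H_{1a1}$ and $H_{1b1}$. As in Lemma~\ref{lemma3.3}, induction shows that the only admissible growth is adding further pairwise nonadjacent cubic vertices with a fixed neighborhood: $\{1,2,6\}$ for $H^l_1$ and $\{3,4,5\}$ for $H^k_{1b}$ and $H^k_{1b1}$. The induction works because splitting or summing at an old vertex reproduces the local configurations already checked, while a new cubic vertex with the same neighborhood yields nothing new.

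\textbf{Edge additions.} I then enumerate all edge additions to each family, as in the $H_3$ analysis. This is not restricted to edges produced by $T_2$-sums, because earlier $K_4$ summands may already have created edges. I keep only the graphs that are $W_6$-minor-free and, by Lemma~\ref{lemma3.4}, $V_8$-minor-free, and among those I take the maximal members. Each maximal graph in the list is then certified $W_6$-minor-free. For the infinite families I reduce to the core graph with a single cubic vertex from each twin class. This reduction holds because two degree-three twins cannot both lie on the rim cycle of a $W_6$ model in a way that a single one could not. The core graphs are then checked directly, by degree and contraction counting as in Lemma~\ref{lemma4.7}. Finally I verify that the non-maximal candidates are minors of those listed.

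The main obstacle is completeness of the edge-addition enumeration together with the maximality claims. There are many nonedges, and the outcome depends on both $W_6$- and $V_8$-exclusion. I would first organize the search by the orbit of the added edge under $\mathrm{Aut}$ of each base graph, extending edge sets one edge at a time. I would prune by monotonicity, since any superset of a $W_6$-containing graph also contains $W_6$, and I would confirm the final tables by computer.
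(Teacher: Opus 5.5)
Your plan is the enumeration the paper carries out in Appendix~A. It takes $T$-sums of $H_1$ and its descendants with $K_4$ and $K_{3,3}$ at each class of cubic vertices, performs all admissible edge additions, and then reduces everything to the stated list by explicit isomorphisms and minor inclusions.

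\textbf{The twin reduction.} This step fails as you justify it. You claim that two cubic twins cannot both lie on the rim of a $W_6$ model in a way that one alone could not. This is false in general. Let $G$ have vertices $a,b,x,y,z,r_3,r_4,r_5,r_6$, where $a$ and $b$ are both joined to $x,y,z$, $z$ is joined to $r_3,r_4,r_5,r_6$, and $x r_6 r_5 r_4 r_3 y$ is a path. Then $G$ is $3$-connected, and $\{z\},\{a,x\},\{b,y\},\{r_3\},\{r_4\},\{r_5\},\{r_6\}$ is a $W_6$ model with both twins in rim branch sets. But $G-b$ has $8$ vertices and only $12$ edges, so it has no $W_6$ minor.

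What makes the reduction valid here is that in every maximal family the twins' common neighborhood spans a triangle: $\{1,2,6\}$ with $12,16,26$, and $\{3,4,5\}$ with $34,35,45$. Under that hypothesis a short case check on the branch sets containing $a$ and $b$ works. Deleting $b$ from its branch set, or exchanging it with $a$, keeps every branch set connected and every required adjacency, because anything routed through $b$ can be rerouted through $a$ or through a triangle edge. The only remaining configuration, with $\{a\}$ and $\{b\}$ both branch sets, is impossible because no two cubic vertices of $W_6$ have the same neighborhood.

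You should state and use this triangle condition; the paper's own justification in Lemma~\ref{lem3.3} is equally informal. The gap affects only the certification that the listed graphs are $W_6$-minor-free. It does not affect the containment asserted in the lemma, which rests on explicit $W_6$ models that persist when further twins are added.

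\textbf{Smaller slips.} Two errors in your forecast of the first round would be corrected by the enumeration itself. In the paper, $H_{1a1}$ and $H_{1b1}$ arise as $T_1$-sums with $K_4$ (at $6$ in $H_{1a}$, and at $6(8)$ in $H_{1b}$), not from $T_3$-sums with $K_{3,3}$. Also, the $T_3$-sum with $K_{3,3}$ at $1(2)$ already produces the family $H^k_1$, with twins on $\{3,4,5\}$, which you omit. Its maximal members $H^k_1+34+35+37+45+46+56$ and $H^k_1+34+35+37+45+48+56$ must be placed inside the $H^k_{1b}$ and $H^k_{1b1}$ families, as the paper does.
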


	\begin{figure}[H]
	\input{h1.tpx}
	\end{figure}

	For each integer $r\geq 1$, let $G^r$ be the graph obtained from $G$ by adding $r$ new cubic vertices, each adjacent precisely to $\{3,5,y\}$.
	
	\begin{lem}
	Let $H \in \mathcal{H}$. $H$ is a minor of one of $H_2+12+16+26+34+35$, $H_2+12+16+26+35+37+45$, $H_{2a}+16+34+56$, $H_{2a}+34+7z$, $H_{2a}+16+7z$, $H_{2c}+12+34+3y$, $H_{2c}+12+34+3z$, $H_{2c}+12+3z+5y$, $H_{2d}+12+35+3x$, $H_{2b1}+12+34$, $H^k_{2a1}+34+35+45$, $H^k_{2a2}+34+35+45$, $H^k_{2c1}+34+35+3u+45$, $H^k_{2c1}+34+35+3y+45$, or	$H^{k,r}_{2e}+34+35+3y+45+5y$ if $H_2$ participates in the $T$-sum.		
	\end{lem}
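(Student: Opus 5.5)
The plan follows the template the paper uses for $H_3$ in Lemmas~\ref{lemma5.8}--\ref{lemma5.10} and for $\mathrm{cube}$ in Lemma~\ref{lemma5.6}.

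\textbf{First round.} Starting from $H_2$, the $T_1$-sum of $K_{3,3}$ with $K_4$, I list its cubic vertices up to automorphism. For each representative cubic vertex I form the $T_i$-sums with $K_4$ ($i=0,1,2$) and with $K_{3,3}$ ($i=0,1,2,3$). This suffices because every graph of $\mathcal{H}$ is built by repeated $T$-sums, and by Lemmas~\ref{lemma5.3}, \ref{lemma5.4} and \ref{lemma5.6} together with the $\Gamma_1$ lemma, any other summand either forces a $W_6$ minor or lands in a family already treated. I tabulate the outcomes as before. Most small-$i$ cases contain $W_6$, witnessed by an explicit contraction. The $T_2$-sums with $K_4$ give $H_2+e$. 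The $T_1$-sums with $K_4$ give the higher-order graphs $H_{2a},H_{2b},H_{2c},H_{2d},H_{2e}$. The $T_3$-sums with $K_{3,3}$ add cubic vertices, giving the variants $H_{2a1},H_{2a2},H_{2b1},H_{2c1}$ and the families $H^k$ (new cubic vertices on $\{3,4,5\}$) and $H^r$ (new cubic vertices on $\{3,5,y\}$).

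\textbf{Iteration.} I repeat the procedure on each new graph. Adding edges only merges vertex classes, so, as argued for $H_3+e$, further $T$-sums of $G+e$ are contained in the $T$-sums of $G$ plus edges. Once a cubic vertex class with a fixed neighbourhood appears, I show by induction on the number of such vertices, as in Lemma~\ref{lem3.3}, that every further admissible $T_3$-sum with $K_{3,3}$ only adds another twin with the same neighbourhood. This yields the infinite families $H^k_{2a1},H^k_{2a2},H^k_{2c1},H^{k,r}_{2e}$.

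\textbf{Edge additions.} For each base graph I then consider all sets of added edges. I discard those that create a $V_8$ minor, since that case belongs to Section~3 and Lemma~\ref{lemma3.4}. I also discard those containing $W_6$, and record the maximal survivors, which are the fifteen graphs in the statement. To verify that these are $W_6$-minor-free, I remove twin cubic vertices down to one representative, using the argument from the Claim in Lemma~\ref{lem3.3}: two degree-two vertices with the same neighbours cannot both lie on the rim $C_6$. I then check the resulting small core graph by degree counting, namely that no sequence of contractions with the right edge count produces a degree-6 vertex whose deletion leaves a $C_6$.

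\textbf{Main obstacle.} I expect the main obstacle to be the completeness and maximality bookkeeping of the edge additions. One must show that every $W_6$-minor-free, $V_8$-minor-free edge-extension of every member of every family is a subgraph of one of the fifteen listed graphs. In practice this is an exhaustive computer enumeration over nonedges up to isomorphism, and the part needing most care is that the enumeration for the infinite families reduces correctly to the bounded core.
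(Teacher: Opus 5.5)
Your plan is essentially the paper's proof: enumerate $T$-sums of $H_2$ and its descendants with $K_4$ and $K_{3,3}$ at representative cubic vertices (Lemma B.1 and its table), list the maximal $V_8$- and $W_6$-minor-free edge extensions of every variant, and then cut these down to the fifteen graphs through cross-class isomorphisms and minor inclusions. Only your predicted bookkeeping is off: in the paper $H_{2e}$ comes from a $T_2$-sum with $K_{3,3}$, and $H_{2a1},H_{2a2},H_{2b1},H_{2c1}$ come from second-round $T_1$-sums with $K_4$; the further variants $H_{2d1},H_{2f},H_{2g},H_{2g1}$ and twin families $H^t,H^s,H^w$ also occur; and the final reduction uses minor rather than subgraph containment (e.g.\ $H^{k,t}_{2a}+34+35+45+56\preceq H^{k,r}_{2e}+34+35+3y+45+5y$), which is all the statement requires.
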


	\begin{figure}[H]
	\input{h22small.tpx}
	\end{figure}

	\begin{lem}\label{lemma5.13}
	Let $H \in \mathcal{H}$. For each integer $m\geq 5$, $H$ is a minor of one of $ K^{3,1}_{4,3}$, $ K^{4,1}_{4,3}$, or $K^{1,3}_{m,3}$ if $K^{1,0}_{3,3}$ or $K_{4,3}$ participates in the $T$-sum.		
	\end{lem}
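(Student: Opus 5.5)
The plan follows the template of Lemmas~\ref{lemma5.3}, \ref{lemma5.6} and \ref{lemma5.8}. First I fix labellings. $K^{1,0}_{3,3}$ is $K_{3,3}$ with one added edge inside one side. In it, the two vertices joined by the added edge have degree $4$; the third vertex of that side is cubic; and all three vertices of the other side are cubic. So up to symmetry there are two classes of cubic vertices. In $K_{4,3}=K^{0,0}_{4,3}$ the four vertices of $X_1$ are cubic and structurally equivalent.

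By Lemma~\ref{lemma5.1}, and since a $T$-sum with a graph containing $W_6$ is useless, the only partners to consider are $K_4$ and $K_{3,3}$. Partners such as cube, $\Gamma_1$, $K^{2,0}_{4,3}$, $K^{3,0}_{4,3}$, $N$, $Q$ and $AW_8$ have already been handled by Lemmas~\ref{lemma5.3}--\ref{lemma5.6}. For each cubic class I tabulate $T_i$-sums, $i=0,1,2,3$, in the same format as the earlier tables. The expected pattern has three parts.
\begin{enumerate}
\item A $T_3$-sum with $K_{3,3}$ at a vertex of $X_1$ of $K_{4,3}$ yields $K_{5,3}$. Iterating gives $K_{m,3}$.
\item $T_2$-sums with $K_4$ add a single edge among the three neighbours of the cubic vertex, that is, an edge inside $X_2$. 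This produces $K^{0,j}_{m,3}$.
\item Every $T_0$- and $T_1$-sum, and every $T_2$-sum with $K_{3,3}$, contains $W_6$. I would certify this by exhibiting the graph $J$ of Lemma~\ref{lemma5.3}, or $K^{2,2}_{4,3}$, as a minor, as was done before.
\end{enumerate}
For $K^{1,0}_{3,3}$, I first note that a $T_3$-sum at a cubic vertex of the side containing no extra edge, taken with $K_{3,3}$, gives $K^{1,0}_{4,3}$-type graphs. So the $K^{1,0}_{3,3}$ branch merges into the $K_{4,3}$ branch, and the two cases can be treated together.

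Next I determine which edge additions survive. For $m=4$, edges inside $X_1$ are possible. Lemma~\ref{lemma5.3}, together with the list in Lemma~\ref{lemma4.4}(iii), shows that the maximal $W_6$-minor-free graphs with vertex count $7$ in this family are $K^{3,1}_{4,3}$ and $K^{4,1}_{4,3}$. I would also check that $K^{i,2}_{4,3}$ contains $W_6$ once $i\ge 2$; the case $i=2$ is proved in Lemma~\ref{lemma5.3}. For $m\ge 5$ the key claims are the following.
\begin{enumerate}
\item Any two edges inside $X_1$ produce $W_6$. This should follow by contracting an edge between $X_1$ and $X_2$ to reach $K^{2,\ast}_{4,3}$-type configurations that already contain $W_6$.
\item Edges inside $X_2$ are harmless.
\end{enumerate}
Together these make $K^{1,3}_{m,3}$ the maximal graph for $m\ge5$.

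To prove $K^{1,3}_{m,3}$ is $W_6$-minor-free, I use the reduction from the Claim in Lemma~\ref{lem3.3}. The vertices of $X_1$ not on the added edge are cubic with the common neighbourhood $X_2$. A $W_6$ model can use at most a bounded number of such twin vertices, because they have degree $2$ into the rim after deleting the hub. So it suffices to check small cases, which reduce to $K^{1,3}_{5,3}$. For that graph I argue via degrees: any $W_6$ minor needs a hub of degree $6$ together with a $6$-cycle avoiding it. With only three vertices of $X_2$ and the structure of the $X_1$ twins, one of two things happens. Either a hub in $X_2$ leaves a rim that must pass through twin degree-$2$ vertices sharing the same two neighbours, which is impossible. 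Or a hub formed by contraction swallows too many of the vertices in $X_2$.

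Finally I check $V_8$-minor-freeness of the survivors. By Lemma~\ref{lemma3.4}, any graph of order greater than $8$ with no twin cubic vertices containing $V_8$ would be handled elsewhere. Here all large graphs have twin cubic vertices, and $K^{1,3}_{m,3}$ has only three vertices of high degree, so a $V_8$ minor (which needs eight branch sets of degree $3$) is impossible by a vertex-cover argument: three vertices meet all edges except one.

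The main obstacle I expect is the exhaustive verification that every non-listed $T$-sum or edge addition contains $W_6$. Specifically, I must show that a second edge inside $X_1$ for $m\ge 5$ always creates $W_6$, and that $T_1$-sums at the degree-$4$-adjacent cubic vertex of $K^{1,0}_{3,3}$ do as well. I would first try to produce explicit models by contracting one $X_1$--$X_2$ edge onto a known $W_6$-containing graph ($J$ or $K^{2,2}_{4,3}$), and fall back on computer checking as in Lemma~\ref{lem3.2} if this fails.
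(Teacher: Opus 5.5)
\textbf{There is a genuine gap in item~3 of your expected pattern.} The claim that every $T_0$- and $T_1$-sum, and every $T_2$-sum with $K_{3,3}$, contains $W_6$ is false. So is the plan to certify each such sum with a $J$ or $K^{2,2}_{4,3}$ model.

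\emph{A $T_0$ counterexample.} Label $K^{1,0}_{3,3}$ with sides $\{1,2,3\}$, $\{4,5,6\}$ and extra edge $12$. The $T_0$-sum with $K_4$ at vertex $3$ replaces $3$ by a triangle and gives $H_1+12$, with $8$ vertices, $13$ edges and maximum degree $4$. A $W_6$ minor would need exactly one contraction, since deleting a vertex already costs three edges. No single contraction creates a vertex of degree $6$, so $H_1+12$ is $W_6$-minor-free.

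\emph{$T_1$ and other cases.} The $T_1$-sums with $K_4$ at $3$ or at $5$ have $7$ vertices, $12$ edges and maximum degree at most $5$. A $W_6$ minor of such a graph would have to be the whole graph, so they are $W_6$-minor-free. This is exactly the case you name as the main obstacle, and it cannot be settled the way you intend. The same holds for the $T_0$- and $T_1$-sums of $K_{4,3}$ with $K_4$ ($H^1_1$, $H^1_2$) and for the $T_2$-sums with $K_{3,3}$ ($H_3+12$, $H_3+35$, $H^1_3$).

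\emph{They are not below your targets.} $H_1+12$ has $8$ vertices, so it is not a minor of $K^{3,1}_{4,3}$ or $K^{4,1}_{4,3}$. Deleting the triangle $X_2$ of $K^{1,3}_{m,3}$ leaves a single edge. Hence every minor of $K^{1,3}_{m,3}$ has three vertices whose deletion leaves at most one edge, whereas deleting any three vertices of $H_1+12$ leaves at least three edges. A computer check would therefore return survivors that contradict your conclusion.

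\textbf{The missing idea is delegation rather than elimination.} These outcomes are $H_1$, $H_2$, $H_3$ or their twin-extended versions $H^1_1$, $H^1_2$, $H^1_3$, plus edges. Everything generated from them is treated in the $H_1$, $H_2$, $H_3$ branches: Lemma~\ref{lemma5.10}, the two lemmas following it, and Appendices~A and~B. The paper says this explicitly at the start of Appendix~C, and the statement must be read with that proviso. Only outcomes that stay inside the family $K^{i,j}_{m,3}$ are claimed to lie below $K^{3,1}_{4,3}$, $K^{4,1}_{4,3}$ or $K^{1,3}_{m,3}$. These are the $T_3$-sums with $K_{3,3}$, which duplicate a cubic vertex, and the $T_2$-sums with $K_4$, which add an edge inside the neighbourhood.

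With this change the rest of your outline follows the paper. Your argument that two edges inside $X_1$ force $W_6$ when $m\ge 5$ is sound: contract a cubic $X_1$-vertex into $X_2$ to reach $K^{2,2}_{4,3}$ or its disjoint-edge analogue.

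Two minor remarks.
\begin{itemize}
\item Duplicating a cubic vertex on the side of $K^{1,0}_{3,3}$ without the extra edge gives $K^{0,1}_{4,3}$. The graph $K^{1,0}_{4,3}$ comes from vertex $3$.
\item The triangle-deletion invariant above shows directly that $K^{1,3}_{m,3}$ is $W_6$- and $V_8$-minor-free for every $m$. Deleting any three vertices of $W_6$ leaves at least two edges, and of $V_8$ at least three. This is cleaner than your twin-and-degree reduction to $m=5$.
\end{itemize}
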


	Lemmas~\ref{lemma5.3}--\ref{lemma5.6} and~\ref{lemma5.10}--\ref{lemma5.13} cover all cases and yield the required results. These results combined are the desired outcome we mentioned in Lemma~\ref{lemma5.1}.

	\begin{lem}\label{lemma5.14}
	Let $H \in \mathcal{H}$. $H$ is a minor of one of $K^{3,1}_{4,3}$, $K^{4,1}_{4,3}$, $H_{2a}+16+7z$, $H_{2a}+16+34+56$, $H_{2a}+34+7z$, $H_{2c}+12+34+3y$, $H_{2c}+12+3z+5y$, $H_{1b}+12+34+3x$, $H_{2b1}+12+34$, $H_{2a1}+34+35+45$, $H_{1a1}$, $\mathrm{cube}^n+24+26+46$, $H^k_{2a2}+34+35+45$,  $H^k_{2c1}+34+35+3y+45$, $H^k_{1b1}+34+35+45+5z$, or 	$H^{k,r}_{2e}+34+35+3y+45+5y$.
	\end{lem}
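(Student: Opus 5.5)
The plan is to assemble Lemma~\ref{lemma5.14} from the branch-by-branch lemmas already proved and then prune by minor containment. By Lemma~\ref{lemma5.1}, every $H\in\mathcal{H}$ is built by repeated $T$-sums from $K_4$, $K_{3,3}$, $\Gamma_1$, $K^{2,0}_{4,3}$, $K^{3,0}_{4,3}$, $\mathrm{cube}$, $N$, $Q$ and $AW_8$, and we split according to which building block participates.

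We first show that the case analysis is exhaustive. If $N$, $Q$ or $AW_8$ participates, Lemma~\ref{lemma5.4} shows that no $W_6$-minor-free graph arises. If $K^{2,0}_{4,3}$, $K^{3,0}_{4,3}$ or $\Gamma_1$ participates, then $H$ is a minor of $K^{3,1}_{4,3}$, using Lemma~\ref{lemma5.3}, the $\Gamma_1$ lemma, and the fact that $K^{2,1}_{4,3}$ is a subgraph of $K^{3,1}_{4,3}$. If $\mathrm{cube}$ participates, Lemma~\ref{lemma5.6} places $H$ under $\mathrm{cube}^1+3x$ or $\mathrm{cube}^n+24+26+46$. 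Otherwise only $K_4$ and $K_{3,3}$ occur. Since $H$ is nonplanar, the first nonplanar intermediate graph is one of $H_1$, $H_2$, $H_3$, $K^{1,0}_{3,3}$ or $K_{4,3}$ from the $K_4$/$K_{3,3}$ enumeration lemma (Table~\ref{table7}). Lemmas~\ref{lemma5.10}--\ref{lemma5.13} and the $H_1$, $H_2$ lemmas then give finite lists of maximal graphs for each of these branches.

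The substantive step is to collapse the union of these lists to the sixteen graphs in the statement by exhibiting minor relations. Several items are absorbed as follows.
\begin{itemize}
\item $\mathrm{cube}^1+3x$ should be a minor of one of the listed graphs, presumably $H_{1b}+12+34+3x$ or a member of the $\mathrm{cube}^n$ family.
\item $H_3+12+35+68$, $H^{k,j}_{3a}+34+35+37+45+57$ and $H^j_{3b}+34+35+45+68$ should each embed into a graph from the $H_2$ or $H_1$ families, for instance $H^{k,r}_{2e}+34+35+3y+45+5y$ or $H^k_{2c1}+\cdots$. The cubic vertices attached to $\{3,4,5\}$ and $\{3,5,7\}$ should correspond to the $k$ and $r$ families attached to $\{3,4,5\}$ and $\{3,5,y\}$.
\item The small $H_1$ items ($H_1+16+27+35$, $H_1+12+34+37+46$, $H_1+12+37+48+56$, $H_1+16+34+35+46$, $H_{1a}+12+18+34$, $H^l_1+12+16+26+45$, $H^k_{1b}+34+35+38+45+56$) should each be a minor of $H_{1a1}$, $H_{1b}+12+34+3x$ or $H^k_{1b1}+34+35+45+5z$, obtained by contracting an edge at a split vertex.
\item The $H_2$ items not retained ($H_2+12+16+26+34+35$, $H_2+12+16+26+35+37+45$, $H_{2c}+12+34+3z$, $H_{2d}+12+35+3x$, $H^k_{2c1}+34+35+3u+45$) should be minors of retained $H_{2\ast}$ graphs.
\item $K^{1,3}_{m,3}$ from Lemma~\ref{lemma5.13} should be absorbed, presumably as a minor of $H^{k,r}_{2e}+\cdots$, since its $m-3$ cubic vertices share a neighbourhood. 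If that fails, it will need to be reconciled with the list.
\end{itemize}
For each such relation, I would give an explicit contraction or deletion, checking labels against the figures.

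The main obstacle is this pruning step. The families are infinite (parameters $k,j,l,r,n$), so each containment must be uniform in the parameter. I would handle this by the same reduction used in Lemma~\ref{lem3.3}: cubic vertices with a common neighbourhood behave identically, so it suffices to match the core graphs together with the attachment triples of the cubic vertices. A containment $A\preceq B$ of cores that maps the attachment triple of $A$ onto an attachment triple of $B$ then lifts to every parameter value. I expect the most delicate cases to be matching the $\{3,5,7\}$-family of the $H_3$ branch to the $\{3,5,y\}$-family of $H_{2e}$, and verifying that $K^{1,3}_{m,3}$ fits.
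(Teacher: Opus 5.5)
Your strategy of taking the branch lemmas as given and collapsing the union of their maxima by minor relations is the paper's. But the collapse step, which is essentially the whole proof, is left as guesses, and the routing you guess for the $H_1$ and $H_2$ items is not the one that works. The missing idea is that maxima from different branches are, up to isomorphism, the same graphs. The paper uses:
\begin{itemize}
\item $H_3+12+35+68\cong H_1+12+34+37+46\cong H_{2c}+12+34+3y$;
\item $H^{j}_{3b}+34+35+45+68\cong H^k_{1b}+34+35+38+45+56\cong H^k_{2c1}+34+35+3y+45$;
\item $H^{k,j}_{3a}+34+35+37+45+57\cong H^{k,r}_{2e}+34+35+3y+45+5y$;
\item $H_1+16+27+35\cong H_{2a}+16+7z$, $H_1+12+37+48+56\cong H_{2c}+12+3z+5y$, $H_1+16+34+35+46\cong H_{2a}+16+34+56$, $H_{1a}+12+18+34\cong H_{2b1}+12+34$;
\item the two small $H_2$ maxima are isomorphic to $K^{3,1}_{4,3}$ and $K^{4,1}_{4,3}$.
\end{itemize}

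Proper containments are used only for the rest, and mostly from the $H_2$ branch into the $H_1$ branch, the opposite of your guess: $H_{2c}+12+34+3z$ and $H_{2d}+12+35+3x$ go into $H_{1b}+12+34+3x$, and $H^k_{2c1}+34+35+3u+45$ into $H^k_{1b1}+34+35+45+5z$. In the other direction, $H^l_1+12+16+26+45$ goes into $H^k_{2c1}+34+35+3y+45$, and $K^{1,3}_{m,3}$ into $H^{k,r}_{2e}+\cdots$. Your guesses for the $H_3$ items and for $K^{1,3}_{m,3}$ are right in spirit.

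Your targets for the small $H_1$ items are unsupported and partly impossible. For instance, $H_{1a1}$ cannot serve for $H_1+16+27+35$ or $H_1+16+34+35+46$. These have $8$ vertices and at least $15$ edges, while $H_{1a1}$ has $10$ vertices and $16$ edges, and each of the two vertex-reducing steps loses at least one edge.

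There is also a smaller slip. The $\Gamma_1$ lemma gives $K^{3',1}_{4,3}$, not $K^{3,1}_{4,3}$. These have the same order and size, but only $K^{3,1}_{4,3}$ has a cubic vertex, so neither is a minor of the other. That branch must go to $K^{4,1}_{4,3}$, as in the paper.

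The more serious problem is $\mathrm{cube}^1+3x$, which Lemma~\ref{lemma5.6} produces but the statement does not list. It is not a minor of any $\mathrm{cube}^n+24+26+46$.
\begin{itemize}
\item In that graph, deleting the triangle $T=\{2,4,6\}$ leaves the $n+1$ vertices adjacent exactly to $T$ (vertex $3$ and its twins) isolated, and a claw on the other four cube vertices.
\item In a minor model of the $9$-vertex graph $F=\mathrm{cube}^1+3x$, at most three branch sets meet $T$.
\item A branch set avoiding $T$ either is one of those isolated vertices or lies inside the claw. In the first case its $F$-vertex has degree $3$, and its neighbourhood is exactly the set of $T$-meeting vertices.
\item The four cubic vertices of $F$ have pairwise distinct neighbourhoods, so at most one branch set is such a singleton.
\item That leaves at least five disjoint branch sets inside four claw vertices, which is impossible.
\end{itemize}

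Nothing in the paper places $\mathrm{cube}^1+3x$ under $H_{1b}+12+34+3x$ or any other listed graph either. Its proof of this lemma gives no relation for it, and Lemma~\ref{lemma5.15} and Theorem~\ref{thm1.2} list it as a separate maximal graph. So the printed list appears simply to have dropped it. The repair is to add $\mathrm{cube}^1+3x$ to the list, not to search for an absorbing containment.

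The paper's proof also includes a maximality check via Lemma~\ref{lemma3.1} and computer-checked vertex splits. That check is not needed for the containment claim itself.
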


	\begin{proof}
	We consider the graphs obtained by repeated $T$-sums and the following isomorphism and minor inclusions hold:
	
	$H_3+12+35+68\cong H_1+12+34+37+46\cong H_{2c}+12+34+3y$,
	
	$H^{k,j}_{3a}+34+35+37+45+57 \cong H^{k,r}_{2e}+34+35+3y+45+5y$,
	
	$H^{j}_{3b}+34+35+45+68\cong H^k_{1b}+34+35+38+45+56\cong H^k_{2c1}+34+35+3y+45$,
	
	$H_1+12+16+27\cong H_{2a}+34+7z$,
	$H_1+16+27+35\cong H_{2a}+16+7z$,
	
	$H_1+12+37+48+56\cong H_{2c}+12+3z+5y$,
	$H_1+16+34+35+46\cong H_{2a}+16+34+56$,
	
	$H_{1a}+12+18+34\cong H_{2b1}+12+34$,
	$H_{2c}+12+34+3z\preceq H_{1b}+12+34+3x$,
	
	$H_{2d}+12+35+3x\preceq H_{1b}+12+34+3x$,
	$H^k_{2c1}+34+35+3u+45\preceq H^k_{1b1}+34+35+45+5z$.
	
	$H_2+12+16+26+35+37+45\cong K^{4,1}_{4,3}$, $H_2+12+16+26+35\cong K^{3,1}_{4,3}$,
	
	$H^j_1+12+16+26+45\preceq H^k_{2c1}+34+35+3y+45$,
	
	$K^{3,1}_{4,3}\preceq K^{4,1}_{4,3}$, $K^{3',1}_{4,3}\preceq K^{4,1}_{4,3}$, $K^{1,3}_{m,3}\preceq H^{k,r}_{2e}+34+35+3y+45+5y$. 
	
	In fact, we have only demonstrated the existence of inclusion relationships. It remains to justify maximality. By Lemma~\ref{lemma3.1}, it is enough to show that every admissible edge addition or vertex split of any listed graph produces a graph containing a $W_6$ minor, unless the resulting graph is again a minor of one of the listed infinite families.
	
	The edge additions have already been checked in the preceding lemmas, so we only discuss vertex splits. For each listed finite graph, a computer-assisted check shows that every admissible split produces a graph containing a $W_6$ minor. For an infinite family, the same check reduces to its repeating core. For example, for $H^n_{2a2}+34+35+45$, a split can remain $W_6$-minor-free only, up to isomorphism, when the split of vertex $3$ satisfies $N_G(3')\setminus\{3''\}=\{4,5\}$.  In this exceptional case the resulting graph is a minor of $H^{n+1}_{2a2}+34+35+45$. The family $\mathrm{cube}^n+24+26+46$ requires one additional observation: after a vertex split of the core $\mathrm{cube}+24+26+46$, the resulting graph is either planar or is a minor of the next member of the same infinite family.
	\end{proof}

	By Lemmas~\ref{lemma5.1} and~\ref{lemma5.14}, we obtain the main conclusion of this section.
	
	\begin{lem}\label{lemma5.15}
	A $3$-connected nonplanar graph $G$ is $\{V_8,W_6\}$-minor-free if and only if $G$ is a nonplanar minor of one of $K^{3,1}_{4,3}$, $K^{4,1}_{4,3}$, $H_{2a}+16+7z$, $H_{2a}+34+7z$, $H_{2a}+16+34+56$, $H_{2c}+12+34+3y$, $H_{2c}+12+3z+5y$, $\mathrm{cube}^1+3x$, $H_{1b}+12+34+3x$, $H_{2b1}+12+34$, $H_{2a1}+34+35+45$, $H_{1a1}$, $\mathrm{cube}^n+24+26+46$, $H^k_{2a2}+34+35+45$,  $H^k_{2c1}+34+35+3y+45$, $H^k_{1b1}+34+35+45+5z$, or $H^{k,r}_{2e}+34+35+3y+45+5y$.
	\end{lem}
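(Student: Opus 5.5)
The plan is to assemble Lemma~\ref{lemma5.15} from Lemma~\ref{lemma5.1} and Lemma~\ref{lemma5.14}, proving the two directions separately.

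For the forward direction, let $G$ be $3$-connected, nonplanar and $\{V_8,W_6\}$-minor-free. By Lemma~\ref{lemma5.1}, either $G$ is one of the listed internally $4$-connected graphs, or $G$ is built by repeated $T$-sums from $K_4$, $K_{3,3}$, $\Gamma_1$, $K^{2,0}_{4,3}$, $K^{3,0}_{4,3}$, $\mathrm{cube}$, $N$, $Q$, $AW_8$.
\begin{enumerate}[(a)]
\item In the first case, discard the planar members ($C^2_6$, $DW_5$, $\mathrm{cube}$, $C^2_8$, $N$, $Q$, $AW_8$). The remaining nonplanar graphs of order at most $7$ must each be exhibited as minors of $K^{3,1}_{4,3}$ or $K^{4,1}_{4,3}$.
\begin{itemize}
\item $K_5$, $K_{3,3}$, $K^{2,0}_{4,3}$, $K^{3,0}_{4,3}$, $K^{3',0}_{4,3}$, $K^{2',1}_{4,3}$, $K^{3',1}_{4,3}$, $K^{4,0}_{4,3}$ are subgraphs of $K^{4,1}_{4,3}$ after matching the labelings in Table~\ref{table3}.
\item For $DW^+_4$, $K_6\setminus e$, $K_6$, $C^2_7$, $C^2_7+e$, $\Gamma_1$, $\Gamma_2$, I would check directly, by contracting one edge of $K^{4,1}_{4,3}$ or exhibiting a subgraph, that each lies below $K^{4,1}_{4,3}$ or one of the $H$-families. $K_6$ is the delicate case: it needs a $K_6$ minor in one of the maximal graphs.
\item If some small graph, e.g.\ $K_6$, is not a minor of any listed graph, the statement as written would need that graph appended. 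I would verify this case by hand first.
\end{itemize}
\item In the second case, $G$ is nonplanar, so $G\in\mathcal H$, and Lemma~\ref{lemma5.14} places $G$ below one of the listed graphs.
\item $\mathrm{cube}^1+3x$ is not in Lemma~\ref{lemma5.14} but arises in Lemma~\ref{lemma5.6}, so it is included here.
\item Lemma~\ref{lemma5.14} lists $H_{2a1}+34+35+45$ without superscript, matching the present statement.
\end{enumerate}

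For the converse, let $G$ be a $3$-connected nonplanar minor of a listed graph. Minor-closedness means it suffices to show every listed graph is both $W_6$-minor-free and $V_8$-minor-free.
\begin{itemize}
\item \textbf{$W_6$-freeness.} For the $K_{4,3}$ graphs this comes from Table~\ref{table3}. For the others it was established during the $T$-sum analysis in Lemmas~\ref{lemma5.6}--\ref{lemma5.14} and the appendix.
\item \textbf{$V_8$-freeness, infinite families.} Each family has members containing two cubic vertices with the same neighborhood. Lemma~\ref{lemma3.3} shows that the only $V_8$-containing $W_6$-free graphs of order above $8$ are minors of $V^i_8+13+16+36$. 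So I would argue that none of the infinite families is a minor of $V^i_8+13+16+36$, e.g.\ by comparing the numbers of high-degree vertices.
\item \textbf{$V_8$-freeness, finite graphs.} These graphs arise from $T$-sums, so Lemma~\ref{lemma2.3} gives $V_8$-freeness provided each step was a $T$-sum of $V_8$-free pieces. Where edges were added ad hoc, as in Lemma~\ref{lemma5.6}, I would cite the explicit checks already made there or compare against Table~\ref{table1}.
\end{itemize}

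The main obstacle is the small-order bookkeeping in the forward direction. The nonplanar internally $4$-connected graphs of order $6$ and $7$ are not automatically covered by the $T$-sum list, and it is not obvious that each is a minor of $K^{4,1}_{4,3}$ or of some $H$-family. I would handle it by a direct check, order by order, of the graphs from Lemma~\ref{lemma4.4}.
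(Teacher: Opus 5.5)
Your route is the paper's: combine Lemma~\ref{lemma5.1} with Lemma~\ref{lemma5.14}, discard the planar internally $4$-connected graphs, and absorb the remaining small nonplanar ones into the listed maximal graphs. The gap is that last absorption step. It is essentially all the paper's proof does, you leave it open and suspect $K_6$ might need appending, and your proposed target is partly wrong.

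$K^{4,1}_{4,3}$ cannot serve for $K_6$. Four edges on four vertices form either a $C_4$ or a paw. In a paw the centre is adjacent to the other three vertices of $X_1$, so it has degree $6$ and deleting it leaves a Hamiltonian $K_{3,3}$; this gives $W_6\subseteq K^{4,1}_{4,3}$. So $W_6$-freeness forces the $C_4$ configuration, which has no triangle in $X_1$ ($K_5$ is not a subgraph). A $6$-vertex minor of it comes from one deletion or one contraction, and each loses at least three of its $17$ edges, so $K_6\not\preceq K^{4,1}_{4,3}$. Likewise $K^{3,0}_{4,3}$, which has the same order, is not a spanning subgraph of it.

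The right target is $K^{3,1}_{4,3}$. From the construction in Lemma~\ref{lemma5.3}, it is $K_{4,3}$ on $X_1=\{a_1,a_2,a_3,a_4\}$, $X_2=\{b_1,b_2,b_3\}$ plus the triangle $a_1a_2a_3$ and the edge $b_1b_2$. Contracting $a_4b_3$ gives a vertex adjacent to $a_1,a_2,a_3,b_1,b_2$, that is, $K_6$. Hence $K_5$, all order-$6$ graphs and $K^{3,0}_{4,3}$ lie below $K^{3,1}_{4,3}$. The other order-$7$ graphs lie below $K^{3,1}_{4,3}$, $K^{4,1}_{4,3}$, or $K^{1,3}_{m,3}\preceq H^{k,r}_{2e}+34+35+3y+45+5y$, as the paper states, so nothing needs to be appended.

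For the converse, the detour through Lemma~\ref{lem3.3} for the infinite families is unnecessary. Comparing numbers of high-degree vertices would also not exclude a minor by itself, since contractions raise degrees. These families are produced by $T$-sums:
\begin{itemize}
\item a $T_3$-sum with $K_{3,3}$ at a cubic vertex adds a twin of that vertex;
\item a $T_2$-sum with $K_4$ adds an edge inside its neighbourhood.
\end{itemize}
So the Lemma~\ref{lemma2.3} argument you already use for the finite graphs applies here too, together with the explicit $V_8$-checks of Section~5 for edges that were added directly.
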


	\begin{proof}
	Since $C^2_6$, $DW_5$, $\mathrm{cube}$, $C^2_8$, $N$, $Q$ and $AW_8$ are planar, we only need to consider the  remaining graphs $K_5$, $K_{3,3}$, $DW^+_4$, $K_6\setminus e$, $K_6$, $\Gamma_1$, $K^{2,0}_{4,3}$, $C^2_7$,  $K^{3,0}_{4,3}$, $K^{3',0}_{4,3}$, $C^2_7+e$, $K^{2',1}_{4,3}$, $K^{3',1}_{4,3}$, $K^{4,0}_{4,3}$, $\Gamma_2$, and  $K^{4,1}_{4,3}$. 
	$K_5$ and all graphs of order 6 are minors of $K_6$, and $K_6\preceq K^{3,1}_{4,3}$. Additionally, all graphs of order 7 are minors of $K^{3,1}_{4,3}$, $K^{4,1}_{4,3}$, or $K^{1,3}_{m,3}$.
	\end{proof}

	We now prove the results stated in Section~1. Proofs are basically summaries of what we have shown.

	\begin{proof}[\textbf{Proof of Theorem~\ref{thm1.1}}]
	The result follows from Lemmas~\ref{lemma3.6} and \ref{lemma4.8}.
	\end{proof}

	\begin{proof}[\textbf{Proof of Theorem~\ref{thm1.2}}]
	The result follows from Lemmas~\ref{lemma3.5} and~\ref{lemma5.15}.
	\end{proof}

\bibliography{i4c.bib}

\newpage
\appendix
\setupappendix
\appendix

\setcounter{figure}{0}
\renewcommand{\thefigure}{A\arabic{figure}}
\setcounter{table}{0}
\renewcommand{\thetable}{A\arabic{table}}
	\section{$T$-sum of $H_1$}

For each integer $l\geq 1$, let $G^l$ be the graph obtained from $G$ by adding $l$ new cubic vertices, each adjacent precisely to $\{1,2,6\}$.

\begin{lem}
	Let $H \in \mathcal{H}$. When $H_1$ participates in the $T$-sum, $H$ can be any of the following variants: $H_1$, $H_{1a}$, $H_{1b}$, $H_{1a1}$, $H_{1b1}$, $H^k_1$, $H^l_1$, $H^k_{1b}$, and $H^k_{1b1}$, possibly with additional edges among the existing vertices, subject to the condition that the resulting graph remains $W_6$-minor-free.
\end{lem}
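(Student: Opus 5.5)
The plan mirrors the proof of Lemma~\ref{lemma5.8} for $H_3$. First I will fix a labeling of $H_1$ and determine its automorphism classes of cubic vertices, since only cubic vertices can take part in a $T$-sum. By Lemma~\ref{lemma5.1} and the preceding reductions, every graph in $\mathcal{H}$ on this branch comes from repeated $T$-sums with $K_4$ and $K_{3,3}$. The graphs $N$, $Q$, $AW_8$, $\Gamma_1$, $K^{2,0}_{4,3}$, $K^{3,0}_{4,3}$ and $\mathrm{cube}$ are already dispatched by Lemmas~\ref{lemma5.3}--\ref{lemma5.6}. So it suffices to tabulate, for one representative cubic vertex $v$ from each class, the $T_i$-sums of $H_1$ with $K_4$ ($i=0,1,2$) and with $K_{3,3}$ ($i=0,1,2,3$).

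For each entry in this table, I will either give an explicit $W_6$ model (contracting a few edges to create a degree-$6$ hub whose neighbourhood carries a $6$-cycle) or record the $W_6$-minor-free outcome. I expect the outcomes to fall into three kinds:
\begin{enumerate}
\item $T_1$-sums with $K_4$, which give the new vertex-increasing graphs $H_{1a}$ and $H_{1b}$;
\item $T_2$-sums with $K_4$, which give only $H_1+e$;
\item $T_3$-sums with $K_{3,3}$, which give $H_1^k$ (a new cubic vertex on $\{3,4,5\}$) and $H_1^l$ (a new cubic vertex on $\{1,2,6\}$).
\end{enumerate}
As in the $\mathrm{cube}^1$ case of Lemma~\ref{lemma5.6}, I will not restrict the edge additions to those literally produced by $T_2$-sums. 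Instead I will allow all edge additions and discard those that contain a $V_8$ minor (checked via Lemma~\ref{lemma3.4}) or a $W_6$ minor.

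Next I iterate the same tabulation on the new graphs $H_{1a}$, $H_{1b}$, $H_1^1$ and $H_1^{1}$ (with the $l$-type vertex). I expect $H_{1a}$ and $H_{1b}$ to produce exactly one further vertex-increasing survivor each, namely $H_{1a1}$ and $H_{1b1}$, plus edge additions and a new $\{3,4,5\}$-cubic vertex. The $H_1^k$ branch should again only add cubic vertices on $\{3,4,5\}$, and similarly for the $l$ branch.

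To close the families under iteration, I will use the induction from Lemma~\ref{lem3.3}. Once several cubic vertices share the same neighbourhood, a $T$-sum at any old vertex reduces to a configuration already handled at the first step. A $T$-sum at a new cubic vertex $w$ whose neighbourhood is $\{3,4,5\}$ behaves like one at an existing twin: a $T_3$-sum with $K_{3,3}$ adds another twin, while a $T_2$-sum adds edges inside $\{3,4,5\}$. The same holds for $\{1,2,6\}$. For the same reason, the claim that each listed graph is $W_6$-minor-free reduces to its core with at most one twin, because a $W_6$ rim cannot use two degree-$3$ twins profitably.

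I expect the main obstacle to be completeness of the case table for $H_{1b}$ and $H_{1b1}$. These have fewer symmetries, so many cubic vertices and many partial contractions must be checked. The most delicate part is verifying that every $T$-sum there other than the listed ones contains $W_6$. Where a hand model is hard to find, I will first try contracting the matching edges incident to the highest-degree neighbours to build the hub. If that fails, I will fall back on the exhaustive computer check used for Lemma~\ref{lem3.2}.
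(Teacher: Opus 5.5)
Your plan is essentially the paper's proof, which is nothing more than the case table. $T_1$-sums with $K_4$ at $5$ and $7$ give $H_{1a}$ and $H_{1b}$, and further $T_1$-sums at $6$ give $H_{1a1}$ and $H_{1b1}$. $T_3$-sums with $K_{3,3}$ add twins of $1$ and of $5$ in $H_1$, and twins of $1$ in $H_{1b}$ and $H_{1b1}$. $T_2$-sums with $K_4$ only add edges. Note that no twin-adding sum survives for $H_{1a}$ or $H_{1a1}$, so your expected new $\{3,4,5\}$-vertex there should come out negative.

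One caution concerns your aside that $W_6$-freeness of a twin family reduces to a one-twin core. The rim argument alone does not justify this: a twin can sit in the hub branch set next to a common neighbour, and then two twins are genuinely used. The reduction does hold when the common neighbourhood induces a triangle, as it does in the maximal graphs. In any case, this lemma only asserts which variants can arise subject to $W_6$-freeness, so you only need the direction of deleting surplus twins, and that direction is valid.
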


\begin{figure}[H]
	\input{h1.tpx}
\end{figure}

\begin{table}[H]
	\centering
	\caption{$T$-sum of $H_1$}
	\begin{tabular}{|c|c|c|c|}
		\hline
		\textbf{Graphs} & \textbf{Vertex} & \textbf{Number of Contracted Edges} & \textbf{Result} \\
		\hline
		
		\multirow{2}{*}{$H_1$ with $K_4$}
		& $5(3,4)$ & $1$ & $H_{1a}$ \\ \cline{2-4}
		& $7(6,8)$ & $1$ & $H_{1b}$ \\ \hline
		
		\multirow{2}{*}{$H_1$ with $K_{3,3}$}
		& $1(2)$   & $3$ & $H^1_1$  \\ \cline{2-4}
		& $5(3,4)$ & $3$ & $H^{1'}_1$ \\ \hline
		
		$H_{1a}$ with $K_4$
		& $6$      & $1$ & $H_{1a1}$ \\ \hline
		
		$H_{1b}$ with $K_4$
		& $6(8)$   & $1$ & $H_{1b1}$ \\ \hline
		
		$H_{1b}$ with $K_{3,3}$
		& $1(2)$   & $3$ & $H^1_{1b}$ \\ \hline
		
		$H_{1b1}$ with $K_{3,3}$
		& $1(2)$   & $3$ & $H^1_{1b1}$ \\ \hline
	\end{tabular}
\end{table}

\begin{lem}
	Let $H \in \mathcal{H}$. $H$ is a minor of one of  $H_1+16+27+35$, $H_1+12+34+37+46$,
	$H_1+12+37+48+56$, $H_1+16+34+35+46$, $H_{1a}+12+18+34$,  $H_{1a1}$, $H_{1b}+12+34+3x$, $H^l_1+12+16+26+45$,  $H^k_{1b}+34+35+38+45+56$,  or
	$H^k_{1b1}+34+35+45+5z$ if $H_1$ participates in the $T$-sum.
	
\end{lem}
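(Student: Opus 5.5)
The plan follows the template of Lemmas~\ref{lemma5.8}--\ref{lemma5.10} for $H_3$, taking the preceding appendix lemma as the starting point. By that lemma, every $H\in\mathcal{H}$ in which $H_1$ participates is one of the base variants $H_1$, $H_{1a}$, $H_{1b}$, $H_{1a1}$, $H_{1b1}$, $H^k_1$, $H^l_1$, $H^k_{1b}$, $H^k_{1b1}$, possibly with extra edges among existing vertices, provided the result stays $W_6$-minor-free. So it suffices to find, for each base variant, the maximal $W_6$-minor-free edge supersets and to check that each one is a minor of a graph in the statement.

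\textbf{Step 1: maximal edge additions for the finite cores.} For each finite base graph ($H_1$, $H_{1a}$, $H_{1b}$, $H_{1a1}$, $H_{1b1}$), I enumerate the subsets of nonedges up to the automorphisms recorded in the $T$-sum table (e.g.\ $5(3,4)$, $7(6,8)$, $1(2)$). I discard any graph containing $W_6$, and also any graph containing $V_8$, since that case is covered by Lemma~\ref{lemma3.5}; for instance, one should check whether $H_1+e$ is isomorphic to some $V_8+\dots$, as happened with $H_3+16$. I expect the maximal survivors to be:
\begin{itemize}
\item for $H_1$: $H_1+16+27+35$, $H_1+12+34+37+46$, $H_1+12+37+48+56$ and $H_1+16+34+35+46$;
\item for $H_{1a}$: $H_{1a}+12+18+34$, together with $H_{1a1}$ itself (no admissible additions);
\item for $H_{1b}$: $H_{1b}+12+34+3x$.
\end{itemize}
For each candidate, $W_6$-freeness is certified as in Lemma~\ref{lemma4.7}. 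A $W_6$ minor needs a vertex of degree at least $6$ after a bounded number of contractions that create no parallel edges. So I list the contraction sets producing such a vertex and check that deleting it leaves no $C_6$ in which that vertex sees all six cycle vertices.

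\textbf{Step 2: the infinite families.} For $H^k_1$, $H^l_1$, $H^k_{1b}$ and $H^k_{1b1}$ I use the reduction from the Claim in Lemma~\ref{lem3.3}. The added cubic vertices are pairwise nonadjacent twins with a common neighbourhood ($\{3,4,5\}$ or $\{1,2,6\}$). A $W_6$ minor can use at most one of them as a branch set essentially distinct from the others, because two degree-$3$ twins cannot both lie on the rim $C_6$ in distinct essential roles. Hence $W_6$-freeness of $H^l_1+12+16+26+45$, $H^k_{1b}+34+35+38+45+56$ and $H^k_{1b1}+34+35+45+5z$ reduces to checking the cores with one or two twins, which is a finite check.

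For maximality I argue inductively, as in Lemma~\ref{lem3.3}. A $T_3$-sum with $K_{3,3}$ at a twin-class vertex only adds another twin. All other $T$-sums are already shown in the tables to yield $\succeq W_6$ or to add edges in the listed sets. Finally, I absorb dominated graphs: $H^k_1$-type graphs with edges at $\{3,4,5\}$ sit inside the $H^k_{1b}$ family, and the base variants with edges are minors of the listed maxima.

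\textbf{Main obstacle.} The main difficulty is Step 1's exhaustiveness together with the $V_8$-exclusion bookkeeping. The number of edge subsets is large, and the maxima are not unique, so one must confirm that every $W_6$-free, $V_8$-free superset lies below one of the ten listed graphs. I would do this with a computer search modulo automorphisms, as in Lemma~\ref{lem3.2}. By hand, I would first try to show that certain edge pairs (for example $12$ with $16$ in $H_1$) always force $W_6$, which prunes the lattice of edge subsets.
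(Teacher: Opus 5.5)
Your plan is essentially the paper's proof. The paper starts from the base variants of Lemma A.1 and lists the maximal $W_6$-minor-free edge supersets. For $H_1$ that raw list is longer than your four; it also contains, for example, $H_1+12+16+26+45$, $H_1+12+34+37+48$ and $H^k_1+34+35+37+45+46+56$. The paper then removes dominated graphs by explicit containments such as $H_1+12+16+26+45\preceq H_{1a}+12+18+34$ and $H^k_{1b}+34+35+38+45+5z\preceq H^k_{1b1}+34+35+45+5z$, exactly as in your absorption step.

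The paper gives no hand certification of $W_6$-freeness here, so your twin reduction is an addition, and the bound of at most two twins does hold. Note, however, that the ``parallel-free contractions only'' shortcut from Lemma~\ref{lemma4.7} depends on a tight edge count, which these candidates lack: $H_1+16+27+35$ has $8$ vertices and $15$ edges. A hand check must therefore also allow contractions that create parallel edges, and vertex deletions.
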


\begin{proof}
	First, we obtain $W_6$-minor-free graphs by Lemma A.1.
	
	(i) $H_1+12+16+27$, $H_1+16+27+35$, $H_1+12+16+26+45$,
	$H_1+12+16+45+46$, $H_1+12+34+37+46$, $H_1+12+34+37+48$, $H_1+12+37+48+56$, $H_1+16+34+35+46$, $H_1+34+35+37+45+46+56$, $H_1+34+35+37+45+48+56$, $H^k_1+34+35+37+45+46+56$, $H^k_1+34+35+37+45+48+56$, $H^l_1+12+16+26+45$;
	
	(ii) $H_{1a}+12+18+34$;
	
	(iii) $H_{1a1}$;
	
	(iv) $H_{1b}+12+34+3x$, $H_{1b}+34+35+38+45+56$, $H_{1b}+34+35+38+45+5z$, $H^k_{1b}+34+35+38+45+56$, $H^k_{1b}+34+35+38+45+5z$;
	
	(v) $H_{1b1}+34+35+45+5z$, $H^k_{1b1}+34+35+45+5z$.
	
	Next, we examine whether there are containment relationships between different classes of graphs.
	
	$H^k_1+34+35+37+45+46+56$ \(\preceq\) 	$H^k_{1b}+34+35+38+45+56$,
	
	$H^k_1+34+35+37+45+48+56$ \(\preceq\) 	$H^k_{1b}+34+35+38+45+5z$,
	
	$H_1+12+16+26+45$ \(\preceq\) 	$H_{1a}+12+18+34$,
	
	$H_1+12+16+45+56$ \(\preceq\) 	$H_{1a}+12+18+34$,
	
	$H_1+12+34+37+48$ \(\preceq\) $H^k_{1b}+34+35+38+45+5z$,
	
	$H_1+12+16+27$ \(\preceq\)	$H^j_1+12+16+26+45$,
	
	$H^k_{1b}+34+35+38+45+5z$  \(\preceq\) $H^k_{1b1}+34+35+45+5z$.
\end{proof}

\section{$T$-sum of $H_2$}

\setcounter{figure}{0}
\renewcommand{\thefigure}{B\arabic{figure}}
\setcounter{table}{0}
\renewcommand{\thetable}{B\arabic{table}}
For integers $r,t,s,w\geq 1$, let $G^r$, $G^t$, $G^s$, and $G^w$ be the graphs obtained from $G$ by adding, respectively, $r$ new cubic vertices adjacent to $\{3,5,y\}$, $t$ new cubic vertices adjacent to $\{4,5,6\}$, $s$ new cubic vertices adjacent to $\{3,4,7\}$ and $w$ new cubic vertices adjacent to $\{1,2,7\}$.

\begin{lem}
	Let $H \in \mathcal{H}$. When $H_2$ participates in the $T$-sum, $H$ can be any of the following variants: $H_2$,
	$H_{2a}$, $H_{2a1}$, $H_{2a2}$, $H_{2b}$, $H_{2b1}$, $H_{2c}$, $H_{2c1}$, $H_{2d}$, $H_{2d1}$, $H_{2e}$, $H_{2f}$, $H_{2g}$,  $H_{2g1}$, $H^k_2$, $H^k_{2a}$, $H^t_{2a}$, $H^{k,t}_{2a}$, $H^k_{2a1}$, $H^k_{2a2}$, $H^s_{2b}$, $H^k_{2c}$, $H^k_{2c1}$, $H^k_{2d}$, $H^k_{2d1}$, $H^k_{2e}$, $H^r_{2e}$, $H^{k,r}_{2e}$, $H^w_{2f}$, $H^k_{2g}$, $H^t_{2g}$, $H^{k,t}_{2g}$, and $H^k_{2g1}$, possibly with additional edges among the existing vertices, subject to the condition that the resulting graph remains $W_6$-minor-free.
\end{lem}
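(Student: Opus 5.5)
We will follow the same scheme used for $H_3$ in Lemma~\ref{lemma5.8} and for $H_1$ in Appendix A. By Lemma~\ref{lemma5.1} and the lemma that $T$-sums preserve minors of a summand, a graph of $\mathcal{H}$ in whose construction $H_2$ takes part arises from $H_2$ by repeated $T$-sums with the remaining admissible summands. The lemmas already proved discard $K^{2,0}_{4,3}$, $K^{3,0}_{4,3}$, $\Gamma_1$, $N$, $Q$, $AW_8$ and $\mathrm{cube}$: either every such $T$-sum contains $W_6$, or it lands in a family already treated. So only $T$-sums with $K_4$ and $K_{3,3}$ remain. A $T$-sum with $K_4$ contracting $0$ or $1$ matching edges replaces a cubic vertex by a triangle or by a $4$-cycle-type gadget. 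A $T_2$-sum with $K_4$ adds one edge among the neighbours of the cubic vertex. A $T_3$-sum with $K_{3,3}$ adds a new cubic vertex with the same neighbourhood. We will record all of this in tables indexed by cubic vertex and number of contracted edges, exactly as in the $H_3$ tables.

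First, partition the cubic vertices of $H_2$ into automorphism classes and take one representative from each. For each representative and each $i\in\{0,1,2,3\}$, compute the $T_i$-sum with $K_4$ (for $i\le 2$) and with $K_{3,3}$. Each outcome either contains $W_6$, certified by an explicit contraction as in Figure~\ref{fig13}, or is recorded. This first round should produce the order-$9$ and order-$10$ graphs $H_{2a},\dots,H_{2g}$, together with the first cloned-vertex graphs $H^1_2$ and the analogous versions for the other neighbourhood triples. Following the convention of Lemma~\ref{lemma5.8}, the $T_2$-sums will not be tracked individually. Instead we allow every edge addition among existing vertices and discard those that create a $V_8$ minor, which is legitimate because of Lemma~\ref{lemma3.4}. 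We then iterate the same table computation on each new graph: first $H_{2a}$, $H_{2c}$, $H_{2d}$, $H_{2g}$, which yield the second-generation graphs $H_{2a1}$, $H_{2a2}$, $H_{2b1}$, $H_{2c1}$, $H_{2d1}$, $H_{2g1}$, and then on those. We stop when every further sum either contains $W_6$ or only adds another cubic vertex with an already present neighbourhood.

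Second, we close the list under cloning. The argument is the induction from Lemma~\ref{lem3.3}. Suppose that in some graph $G$ from the list, the only $W_6$-free $T_3$-sum with $K_{3,3}$ at a cubic vertex with neighbourhood $S$ is a graph with two vertices of neighbourhood $S$. Then further $T$-sums at either copy, or at old vertices, reduce to configurations already checked. Two cubic vertices with identical neighbourhoods are interchangeable, and any $W_6$ model uses at most one of them in a nontrivial branch set, since a cycle of length $6$ cannot pass through two degree-$2$ vertices sharing the same neighbours. This gives the superscript families: $k$ for $\{3,4,5\}$, $r$ for $\{3,5,y\}$, $t$ for $\{4,5,6\}$, $s$ for $\{3,4,7\}$, and $w$ for $\{1,2,7\}$. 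We then attach each superscript only to those base graphs for which the corresponding vertex survived the tables. That yields precisely the listed variants, each possibly with extra edges.

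The main obstacle is completeness of the case analysis. The $H_2$ branch has many more inequivalent cubic vertices than $H_3$, and new graphs keep appearing through second-level sums. The risk is missing a branch, or mis-certifying a $T$-sum as $W_6$-free or as containing $W_6$. To control this, we will run the same exhaustive computer check used for Table~\ref{table1}. For each graph, it generates all $T$-sums with $K_4$ and $K_{3,3}$ at every cubic vertex, reduces them up to isomorphism, and tests for $W_6$ and $V_8$ minors. For the infinite families, the check is performed on the repeating core, with one or two cloned vertices.
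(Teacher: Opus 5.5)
Your proposal is essentially the paper's own argument: Table B1 is exactly this representative-vertex enumeration of $T$-sums of $H_2$ and its descendants with $K_4$ and $K_{3,3}$. As in your plan, $T_2$-sums are absorbed into arbitrary $V_8$-free edge additions, and $T_3$-sums with $K_{3,3}$ produce the twin-vertex families indexed by $k,r,t,s,w$.

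Two small slips, neither of which affects the approach. First, $H_2$ has order $7$, so the first-round graphs $H_{2a},\dots,H_{2g}$ have order $8$ (order $9$ for $H_{2e}$), not $9$ and $10$. Second, your ``at most one twin'' claim is false as stated: a $W_6$ model can genuinely need two twin cubic vertices (e.g.\ one as a singleton rim branch set and one inside the hub's branch set), though never more than two. So what justifies the reduction to the core is your check of the core with two twins, not that claim.
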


\begin{figure}[H]
	\input{h22.tpx}
\end{figure}

\begin{table}[H]
	\centering
	\caption{$T$-sum of $H_{2}$}
	\begin{tabular}{|c|c|c|c|}
		\hline
		\textbf{Graphs} & \textbf{Vertex} & \textbf{Number of Contracted Edges} & \textbf{Result} \\
		\hline
		
		\multirow{3}{*}{$H_{2}$ with $K_4$}
		& $1(2)$ & $1$ & $H_{2a}$ \\ \cline{2-4}
		& $5(3)$ & $1$ & $H_{2b}$ \\ \cline{2-4}
		& $7(6)$ & $1$ & $H_{2c}$, $H_{2d}$ \\ \hline
		
		\multirow{4}{*}{$H_{2}$ with $K_{3,3}$}
		& \multirow{2}{*}{$1(2)$} & $2$ & $H_{2e}$ \\ \cline{3-4}
		& & $3$ & $H^1_{2}$ \\ \cline{2-4}
		& $5(3)$ & $3$ & $H_{2f}$ \\ \cline{2-4}
		& $7(6)$ & $3$ & $H_{2g}$ \\ \hline
		
		\multirow{2}{*}{$H_{2a}$ with $K_4$}
		& $1$ & $1$ & $H_{2a1}$ \\ \cline{2-4}
		& $z(7)$ & $1$ & $H_{2a2}$ \\ \hline
		
		\multirow{2}{*}{$H_{2a}$ with $K_{3,3}$}
		& $1$ & $3$ & $H^1_{2a}$ \\ \cline{2-4}
		& $z(7)$ & $3$ & $H^{1'}_{2a}$ \\ \hline
		
		$H^1_{2a}$ with $K_{4}$
		& $1$ & $1$ & $H^1_{2a1}$ \\ \hline
		
		$H_{2a2}$ with $K_{3,3}$
		& $1$ & $3$ & $H^1_{2a2}$ \\ \hline
		
		$H_{2b}$ with $K_4$
		& $6$ & $1$ & $H_{2b1}$ \\ \hline
		
		$H_{2b}$ with $K_{3,3}$
		& $6$ & $3$ & $H^1_{2b}$ \\ \hline
		
		\multirow{2}{*}{$H_{2c}$ with $K_4$}
		& $3$ & $1$ & $H_{2b1}$ \\ \cline{2-4}
		& $z$ & $1$ & $H_{2c1}$ \\ \hline
		
		$H_{2c}$ with $K_{3,3}$
		& $1(2)$ & $3$ & $H^1_{2c}$ \\ \hline
		
		$H_{2c1}$ with $K_{3,3}$
		& $1$ & $3$ & $H^1_{2c1}$ \\ \hline
		
		\multirow{2}{*}{$H_{2d}$ with $K_4$}
		& $1(2)$ & $1$ & $H_{2a2}$ \\ \cline{2-4}
		& $6$ & $1$ & $H_{2d1}$ \\ \hline
		
		$H_{2d}$ with $K_{3,3}$
		& $1(2)$ & $3$ & $H^1_{2d}$ \\ \hline
		
		$H_{2d1}$ with $K_{3,3}$
		& $1$ & $3$ & $H^1_{2d1}$ \\ \hline
		
		\multirow{2}{*}{$H_{2e}$ with $K_{3,3}$}
		& $1$ & $3$ & $H^1_{2e}$ \\ \cline{2-4}
		& $u(w)$ & $3$ & $H^{1'}_{2e}$ \\ \hline
		
		$H_{2f}$ with $K_{3,3}$
		& $5(x)$ & $3$ & $H^1_{2f}$ \\ \hline
		
		\multirow{3}{*}{$H_{2g}$ with $K_4$}
		& $1(2)$ & $1$ & $H^1_{2a}$ \\ \cline{2-4}
		& $3$ & $1$ & $H^1_{2b}$ \\ \cline{2-4}
		& $7(x)$ & $1$ & $H_{2g1}$ \\ \hline
		
		\multirow{3}{*}{$H_{2g}$ with $K_{3,3}$}
		& $1(2)$ & $1$ & $H^{1'}_{2g}$ \\ \cline{2-4}
		& $3$ & $1$ & $H^1_{2f}$ \\ \cline{2-4}
		& $7(x)$ & $1$ & $H^1_{2g}$ \\ \hline
		
		$H_{2g1}$ with $K_{3,3}$
		& $1$ & $3$ & $H^1_{2g1}$ \\ \hline
	\end{tabular}
\end{table}

\begin{lem}
	Let $H \in \mathcal{H}$. $H$ is a minor of one of 	 $H_2+12+16+26+34+35$, $H_2+12+16+26+35+37+45$, $H_{2a}+16+34+56$, $H_{2a}+34+7z$, $H_{2a}+16+7z$, $H_{2c}+12+34+3y$,  $H_{2c}+12+34+3z$, $H_{2c}+12+3z+5y$, $H_{2d}+12+35+3x$,
	$H^{k,r}_{2e}+34+35+3y+45+5y$, $H^K_{2a1}+34+35+45$, $H^k_{2a2}+34+35+45$, $H_{2b1}+12+34$, 	$H^k_{2c1}+34+35+3u+45$, or $H^k_{2c1}+34+35+3y+45$, 	if $H_2$ participates in the $T$-sum.
	
\end{lem}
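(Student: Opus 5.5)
The plan follows the template of the $H_1$ case (Lemma A.2) and of Lemma~\ref{lemma5.10}. By Lemma B.1, every $H\in\mathcal{H}$ arising from a $T$-sum sequence in which $H_2$ participates is one of the listed variants of $H_2$, possibly with edges added among existing vertices. So it suffices to work through these variants one at a time.

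For each variant, I will determine the maximal $W_6$-minor-free edge additions that are also $V_8$-minor-free. Here $V_8$ must be excluded because, by Lemma~\ref{lemma3.4}, $V_8$-containing graphs are already covered in Section 3. Concretely, I start from the base graph and add nonedges one at a time. I prune any graph that contains a $W_6$ or $V_8$ minor, and I remove isomorphic duplicates using the symmetries recorded in the table (e.g. $1(2)$, $5(3)$, $7(6)$).

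For the infinite families $H^k_{\ast}$, $H^r_{2e}$, $H^t_{\ast}$, $H^s_{2b}$, $H^w_{2f}$, I will use the reduction from the Claim in Lemma~\ref{lem3.3}. The added cubic vertices have identical three-element neighbourhoods and are pairwise nonadjacent. Consequently, a $W_6$ minor can use at most one of them in an essential way, since two degree-$\le 3$ twins cannot both lie on the rim of a $C_6$ with the same neighbours. It therefore suffices to check the core graph with one (or zero) copies of each twin class. This step yields a finite list of maximal members for each variant, analogous to items (i)–(v) in Lemma A.2.

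Next, I will prune this list by exhibiting containments and isomorphisms between members of different variant classes. Examples include $H_{2}^{k}+\cdots\preceq H_{2g}^k+\cdots$, and $H^{t}_{2a}$-type graphs being minors of $H^{k,r}_{2e}+34+35+3y+45+5y$. After pruning, only the fifteen graphs named in the statement should survive as maximal ones. Each containment is verified by an explicit contraction or deletion, often contracting the edge created by a $T_1$-sum back to recover the smaller variant. The stated graphs are then maximal, and every $H$ is a minor of one of them.

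The main obstacle is the sheer size of the case analysis for $H_2$. It has far more variants than $H_1$ or $H_3$, for instance $H_{2e}$, $H_{2f}$, $H_{2g}$ and their twin-vertex families. Certifying $W_6$-minor-freeness of each maximal core graph, for example $H_{2e}+34+35+3y+45+5y$ with one copy of each twin, requires a careful degree-six-vertex argument. I would first try the counting argument used for $Q$: a $W_6$ minor needs a vertex of degree $6$ whose neighbourhood, after deleting it, contains a $C_6$. Enumerating the few contraction sets that produce such a vertex should be manageable by hand, with a computer check as backup.
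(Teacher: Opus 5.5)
Your strategy matches the paper's. Its proof of this lemma takes the variants from Lemma B.1, records the maximal $W_6$-minor-free (and $V_8$-minor-free) edge augmentations in items (i)--(xiv), and then collapses them by isomorphisms and minor inclusions to the fifteen listed graphs.

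The step that fails as written is your twin reduction for the infinite families. Your reason, that two twins cannot both be rim vertices with the same neighbours, only covers twins used as singleton branch sets. A twin can also sit inside a larger branch set and supply adjacencies between the branch sets containing its three neighbours, and then a second twin matters. Concretely, take $W_6$ with hub $h$ and rim $v_1v_2\cdots v_6$, delete $v_1$ and the spokes $hv_2,hv_6$, and add two nonadjacent cubic vertices $a,b$, both adjacent to $\{h,v_2,v_6\}$. Contracting $ah$ gives $W_6$, with $b$ as the sixth rim vertex. Yet deleting $b$ leaves $7$ vertices and $10$ edges, so there is no $W_6$ minor. Hence ``check the core with one copy'' is not valid in general.

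The repair is that in every infinite family in the statement, the common neighbourhood of the twins is a clique: $\{3,4,5\}$ via $34,35,45$, and $\{3,5,y\}$ via $35,3y,5y$. Suppose $\{p,q,r\}$ is a triangle and a twin lies in a non-singleton branch set $B$. Every component of $B$ minus the twin meets the complete set $\{p,q,r\}$, so $B$ stays connected after removing the twin. Every adjacency the twin supplied is already supplied by a triangle edge, so the twin can be dropped from $B$. At most one twin is a singleton branch set, since it must be a rim vertex and no two rim vertices of $W_6$ have the same neighbourhood. If that singleton is $b$, first drop $a$ from its branch set and let $\{a\}$ replace $\{b\}$. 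Hence with $k\ge 2$ twins you can always delete one.

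To control which edge sets are admissible for all $k$, write $X^k+E$ for a core $X$ with $k$ twins on $\{p,q,r\}$ plus an edge set $E$. Contracting one twin into $p$ and another into $q$ shows $X^{k+2}+E\succeq X^k+E+pq+pr+qr$. So for $k\ge 3$, every admissible $X^k+E$ lies in an admissible $X^k+E'$ with $E'$ containing the triangle, where the clique argument applies; only $k\le 2$ needs a direct finite check.

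With this correction your plan coincides with the paper's argument. The same clique condition is what makes the paper's appeal to the Claim in Lemma~\ref{lem3.3} legitimate, since $\{1,3,6\}$ and $\{2,4,6\}$ are triangles there.
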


\begin{proof}
	First, we obtain $W_6$-minor-free graphs by Lemma B.1.
	
	(i)	$H_2+12+16+26+34+35$, $H_2+34+35+37+45+56$,
	$H_2+12+16+26+35+37+45$, $H^{k}_2+34+35+37+45+56$;
	
	(ii) $H_{2a}+34+35+45+56$, $H_{2a}+16+34+56$, $H_{2a}+34+7z$, $H_{2a}+16+7z$, $H^k_{2a}+34+35+45+56$, $H^t_{2a}+34+35+45+56$, $H^{k,t}_{2a}+34+35+45+56$;
	
	(iii) $H_{2b}+12+17+34$, $H_{2b}+12+34+37$, $H_{2b}+16+17+26$, $H_{2b}+12+3y$, $H_{2b}+34+3y$, $H^s_{2b}+12+34+37$;
	
	(iv) $H_{2c}+34+35+3y+45+56$, $H_{2c}+34+35+3z+45+5y$,
	$H_{2c}+12+16+45$, $H_{2c}+12+34+3y$, $H_{2c}+12+34+3z$, $H_{2c}+12+3z+5y$, $H_{2c}+16+34+35$, $H^k_{2c}+34+35+3y+45+56$, $H^k_{2c}+34+35+3z+45+5y$;
	
	(v)	$H_{2d}+34+35+3y+45+6y$; $H_{2d}+34+35+3x+45+6y$,
	$H_{2d}+12+35+3x$, $H_{2d}+12+3x+6y$, $H_{2d}+12+45+6y$, 	$H^k_{2d}+34+35+3y+45+6y$, $H^k_{2d}+34+35+3x+45+6y$;
	
	(vi) $H_{2e}+34+35+3y+45+5y$, $H^k_{2e}+34+35+3y+45+5y$,
	$H^r_{2e}+34+35+3y+45+5y$, $H^{k,r}_{2e}+34+35+3y+45+5y$;
	
	(vii) $H_{2a1}+34+35+45$, $H^k_{2a1}+34+35+45$;
	
	(viii) $H_{2a2}+34+35+45$, $H^k_{2a2}+34+35+45$;
	
	(ix) $H_{2b1}+12+34$;
	
	(x)	$H_{2c1}+34+35+3u+45$, $H_{2c1}+34+35+3y+45$, 	$H^k_{2c1}+34+35+3u+45$, $H^k_{2c1}+34+35+3y+45$;
	
	(xi) $H_{2d1}+34+35+45+uy$, $H^k_{2d1}+34+35+45+uy$;
	
	(xii) $H_{2f}+12+17+27+34$, $H^w_{2f}+12+17+27+34$;
	
	(xiii) $H_{2g}+12+16+45+56$, $H_{2g}+34+35+45+56$, $H^k_{2g}+12+16+45+56$, $H^k_{2g}+34+35+45+56$,  $H^t_{2g}+34+35+45+56$, $H^{k,t}_{2g}+34+35+45+56$;
	
	(xiv) $H_{2g1}+34+35+45+56$, $H^k_{2g1}+34+35+45+56$.
	
	Next, we examine whether there are containment relationships between different classes of graphs.
	
	$H_2^k+34+35+37+45+56\preceq H^k_{2c}+34+35+3y+45+56$,
	
	$H_{2a}+34+7z\cong H_{2b}+34+3y$,
	
	$H_{2a}+16+7z\cong H_{2b}+12+3y$,
	
	$H_{2a}+16+34+56 \cong H_{2b}+16+17+26\cong H_{2c}+16+34+35$,
	
	$H^{k,t}_{2a}+34+35+45+56\preceq H^{k,r}_{2e}+34+35+3y+45+5y$,
	
	$H_{2b}+12+17+34\cong H_{2c}+12+16+45\preceq H_{2b1}+12+34$,
	
	$H^s_{2b}+12+34+37\preceq H^k_{2c1}+34+35+3y+45$,
	
	$H_{2b}+16+17+26\cong H_{2c}+16+34+35$,
	
	$H_{2c}+12+16+45 \preceq H_{2b1}+12+34$,
	
	$H_{2c}+12+34+3z\cong H_{2d}+12+45+6y$,
	
	$H_{2c}+12+3z+5y \cong H_{2d}+12+3x+6y$,
	
	$H^k_{2c}+34+35+3y+45+56 \cong H^k_{2d}+34+35+3y+45+6y\cong H^w_{2f}+12+17+27+34\cong H^k_{2g}+12+16+45+56\preceq H^k_{2c1}+34+35+3y+45$,
	
	$H_{2c}+34+35+3z+45+5y\cong H_{2d}+34+35+3x+45+6y\preceq H_{2c1}+34+35+3u+45$,
	
	$H_{2c1}+34+35+3u+45\cong H_{2d1}+34+35+45+uy$,
	
	$H^{k,t}_{2g}+34+35+45+56\preceq H^{k,r}_{2e}+34+35+3y+45+5y$,
	
	$H^{k}_{2g1}+34+35+45+56\cong H^{k,r}_{2e}+34+35+3y+45+5y$.
\end{proof}

\section{$T$-sum of $K^{1,0}_{3,3}$ or $K_{4,3}$}

\setcounter{figure}{0}
\renewcommand{\thefigure}{C\arabic{figure}}
\setcounter{table}{0}
\renewcommand{\thetable}{C\arabic{table}}

In fact, we need to consider graphs like $H_1+12$ and $H^1_1$. However, since they have already been covered in the cases for $H_1$, $H_2$, and $H_3$, we focus only on $K^{1,0}_{3,3}$ and $K^{1,0}_{4,3}$ here.

\begin{lem}
	
	Let $H \in \mathcal{H}$.
	When $K^{1,0}_{3,3}$ or $K_{4,3}$ participates in the $T$-sum, $H$ contains any of the following variants: $K^{1,0}_{3,3}$ or $K^{1,0}_{4,3}$, possibly with additional edges among the existing vertices, subject to the condition that the resulting graph remains $W_6$-minor-free.
	
\end{lem}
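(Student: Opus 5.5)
I will follow the same scheme used for $H_3$ in Lemma~\ref{lemma5.8} and for $H_1$, $H_2$ in Lemmas A.1 and B.1. First I fix labelings. In $K^{1,0}_{3,3}$ the six vertices are $1,2,3$ and $4,5,6$, and the single extra edge lies inside the first class. In $K_{4,3}$ the classes are $X_1=\{1,2,3,4\}$ and $X_2=\{5,6,7\}$. The cubic vertices are then: in $K^{1,0}_{3,3}$, the vertex of the first class not on the extra edge; in $K_{4,3}$, all four vertices of $X_1$, which are pairwise equivalent. Up to symmetry this leaves one or two representative vertices per graph.

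By Lemma~\ref{lemma5.1} and the preceding reduction, every further summand may be taken to be $K_4$ or $K_{3,3}$, since all larger summands are themselves built from these. So for each representative cubic vertex I tabulate the $T_i$-sums with $K_4$ ($i=0,1,2$) and with $K_{3,3}$ ($i=0,1,2,3$), as in the earlier tables.

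\begin{itemize}
\item For $K_4$ I expect $T_0$ and $T_1$ to give a $W_6$ minor, as happened for $K^{2,0}_{4,3}$ via the graph $J$. The $T_2$-sum is just the same graph plus an edge joining two neighbours of the chosen cubic vertex.
\item For $K_{3,3}$ I expect $T_0$, $T_1$ and $T_2$ to contain $W_6$, exhibited by an explicit contraction as in Lemma~\ref{lemma5.3}. The $T_3$-sum replaces the cubic vertex by two cubic vertices with the same neighbourhood. This turns $K^{1,0}_{3,3}$ into $K^{1,0}_{4,3}$ and $K_{4,3}$ into $K_{5,3}$, and more generally raises $m$ in $K^{1,0}_{m,3}$ and $K_{m,3}$.
\end{itemize}

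Some outcomes, such as $T_0$-sums with $K_4$, may instead give the already-treated graphs $H_1+12$ or $H^1_1$. I will discard these, referring to the $H_1$, $H_2$ and $H_3$ cases. What survives is therefore $K^{1,0}_{m,3}$ or $K_{m,3}$ with some edges added among the existing vertices, and the $K_{m,3}$ case is absorbed into $K^{1,0}_{m,3}$ plus edges.

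The induction on $m$ then mirrors the argument in Lemma~\ref{lem3.3}.
\begin{itemize}
\item The new cubic vertices are pairwise twins with neighbourhood $X_2$. Any sum at an old vertex therefore reduces to the local configurations already checked at the smallest order.
\item A sum at a twin cubic vertex either produces another twin ($T_3$ with $K_{3,3}$), adds an edge inside $X_2$ ($T_2$ with $K_4$), or creates $W_6$.
\end{itemize}
Hence every graph in this branch is $K^{1,0}_{m,3}$ together with edges added inside $X_1$ or $X_2$, which is exactly the claimed form.

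The main obstacle is the exhaustive verification that the intermediate sums with $K_{3,3}$ ($T_0$, $T_1$, $T_2$) and the $T_0$, $T_1$ sums with $K_4$ all contain $W_6$. This needs an explicit minor model for each case. For $T_2$-sums with $K_{3,3}$ I would first contract a new edge to obtain $K^{i,2}_{m,3}$-type configurations, which contain $W_6$ in the same way as $K^{2,2}_{4,3}$ in Lemma~\ref{lemma5.3}. Remaining cases would be checked by computer, as elsewhere in the paper.
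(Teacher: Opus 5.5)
Your overall architecture matches the paper's. Its proof of this lemma is just the two tables of Appendix C. These list the $T_i$-sums of $K^{1,0}_{3,3}$ and $K_{4,3}$ with $K_4$ and $K_{3,3}$ at representative cubic vertices. Every $H_1$-, $H_2$-, $H_3$-type outcome is deferred to the earlier branches, and only edge additions and twin growth are kept.

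However, two points in your execution are wrong. First, the cubic vertices of $K^{1,0}_{3,3}$ (extra edge $12$) are not just $3$. Adding $12$ leaves $4,5,6$ of degree $3$, and they form a second orbit, which the paper treats as the row $5(4,6)$. Sums there give:
\begin{itemize}
\item $K^{2,0}_{3,3}$, from $T_2$ with $K_4$;
\item $K^{0,1}_{4,3}$, from $T_3$ with $K_{3,3}$, with the edge $12$ now on the $3$-side and later twin growth on the other side;
\item $H_1+35$, $H_2+35$ and $H_3+35$.
\end{itemize}
Your final form can absorb these graphs, but your enumeration never generates them, so the case analysis is incomplete.

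Second, your predictions about which sums contain $W_6$ are false, and the step you flag as the main obstacle cannot succeed. At vertex $3$, the $T_0$- and $T_1$-sums with $K_4$ are $H_1+12$ and $H_2+12$, and the $T_2$-sum with $K_{3,3}$ is $H_3+12$. For $K_{4,3}$ the corresponding outcomes are $H^1_1$, $H^1_2$ and $H^1_3$. These are $W_6$-minor-free:
\begin{itemize}
\item The $T_1$-sum with $K_4$ has $7$ vertices and $12$ edges but maximum degree $4$, so it is not $W_6$.
\item The $T_2$-sum with $K_{3,3}$ has $8$ vertices and maximum degree $4$, and each edge joining two degree-$4$ vertices ($12,14,15,24,25$) lies in a triangle. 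So neither deleting a vertex nor contracting one edge creates a vertex of degree $6$.
\end{itemize}
Hence no contraction to a $K^{i,2}_{m,3}$-type configuration will ever exhibit $W_6$ there. These outcomes must be handed to the $H_1$, $H_2$ and $H_3$ analyses (Lemma~\ref{lemma5.8} and Appendices A and B). Your ``discard'' sentence does this only for $H_1+12$ and $H^1_1$.

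For the same reason, the trichotomy in your induction step (twin, edge inside $X_2$, or $W_6$) is false. A fourth outcome, an $H^k_1$-, $H^k_2$- or $H^k_3$-type graph, must be allowed and deferred. With these corrections your argument becomes the paper's, and your induction on $m$ makes explicit what the paper leaves implicit in the two tables.
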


\begin{table}[H]
	\centering
	\caption{$T$-sum of $K^{1,0}_{3,3}$}
	\begin{tabular}{|c|c|c|c|}
		\hline
		\textbf{Graphs} & \textbf{Vertex} & \textbf{Number of Contracted Edges} & \textbf{Result} \\
		\hline
		
		\multirow{6}{*}{$K^{1,0}_{3,3}$ with $K_4$}
		& \multirow{3}{*}{$3$} & $0$ & $H_1+12$ \\ \cline{3-4}
		& & $1$ & $H_2+12$ \\ \cline{3-4}
		& & $2$ & $K^{1,1}_{3,3}$ \\ \cline{2-4}
		
		& \multirow{3}{*}{$5(4,6)$} & $0$ & $H_1+35$ \\ \cline{3-4}
		& & $1$ & $H_2+35$ \\ \cline{3-4}
		& & $2$ & $K^{2,0}_{3,3}$ \\ \hline
		
		\multirow{4}{*}{$K^{1,0}_{3,3}$ with $K_{3,3}$}
		& \multirow{2}{*}{$3$} & $2$ & $H_3+12$ \\ \cline{3-4}
		& & $3$ & $K^{1,0}_{4,3}$ \\ \cline{2-4}
		
		& \multirow{2}{*}{$5(4,6)$} & $2$ & $H_3+35$ \\ \cline{3-4}
		& & $3$ & $K^{0,1}_{4,3}$ \\ \hline
	\end{tabular}
\end{table}

\begin{table}[H]
	\centering
	\caption{$T$-sum of $K_{4,3}$}
	\begin{tabular}{|c|c|c|c|}
		\hline
		\textbf{Graphs} & \textbf{Vertex} & \textbf{Number of Contracted Edges} & \textbf{Result} \\
		\hline
		
		\multirow{3}{*}{$K_{4,3}$ with $K_4$}
		& \multirow{3}{*}{$4(1,2,3)$} & $0$ & $H^1_1$ \\ \cline{3-4}
		& & $1$ & $H^1_2$ \\ \cline{3-4}
		& & $2$ & $K^{0,1}_{4,3}$ \\ \hline
		
		\multirow{3}{*}{$K_{4,3}$ with $K_{3,3}$}
		& \multirow{3}{*}{$4(1,2,3)$} & $0,1$ & $H^1_1$ \\ \cline{3-4}
		& & $2$ & $H^1_3$ \\ \cline{3-4}
		& & $3$ & $K_{5,3}$ \\ \hline
	\end{tabular}
\end{table}

\begin{lem}
	Let $H \in \mathcal{H}$. For each integer $m\geq 5$, $H$ contains a minor of one of $K^{3,1}_{4,3}$, $K^{4,1}_{4,3}$ or $K^{1,3}_{m,3}$ if $K^{1,0}_{3,3}$ or $K_{4,3}$ participates in the $T$-sum.
	
\end{lem}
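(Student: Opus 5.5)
Following Lemma C.1, I will treat the two seeds $K^{1,0}_{3,3}$ and $K^{1,0}_{4,3}$ (together with their admissible edge additions) as the only new starting graphs in this branch. The tables of Lemma C.1 already send the other outcomes elsewhere. The outcomes $H_1+12$, $H_1+35$, $H_2+12$, $H_2+35$, $H_3+12$, $H_3+35$, $H^1_1$, $H^1_2$ and $H^1_3$ lie in the $H_1$, $H_2$ and $H_3$ branches, so they are covered by Lemmas~\ref{lemma5.10} and A.2, B.2. The remaining outcomes are $K^{1,1}_{3,3}$, $K^{2,0}_{3,3}$, $K^{0,1}_{4,3}$, $K^{1,0}_{4,3}$ and $K_{5,3}$.

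\textbf{Edge additions on at most seven vertices.} For the order-$7$ graphs $K^{i,j}_{4,3}$, I enumerate edge additions up to the symmetry of the bipartition $X_1\cup X_2$, with $|X_1|=4$ and $|X_2|=3$. The claim to verify is that any graph with $j\ge 2$, or with $j=1$ and an $X_1$-edge configuration not contained in $K^{4,1}_{4,3}$ or $K^{3,1}_{4,3}$, contains $W_6$. The key witness is $K^{2,2}_{4,3}\succeq W_6$, already used in Lemma~\ref{lemma5.3}. Since $K^{3,1}_{4,3}$, $K^{3',1}_{4,3}$ and $K^{4,1}_{4,3}$ appear in Table~\ref{table3}, they are $W_6$-minor-free, and every admissible order-$7$ graph here is a subgraph of $K^{3,1}_{4,3}$ or $K^{4,1}_{4,3}$. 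Graphs on six vertices are minors of these as well, since each contains $K_6$.

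\textbf{Growth in order.} A $T_3$-sum with $K_{3,3}$ at a vertex of $X_1$ turns $K^{i,j}_{m,3}$ into $K^{i,j}_{m+1,3}$. Any other $T$-sum at an $X_1$ vertex either adds an edge inside $X_2$ (a $T_2$-sum with $K_4$) or falls back into the $H_1$, $H_2$, $H_3$ branches. I will show that for $m\ge5$ the only surviving graphs have $X_1$ independent. As in Lemma~\ref{lemma5.3}, $K^{1,0}_{5,3}$ contracts onto a graph containing $K^{2,2}_{4,3}$, and so does any edge inside $X_1$ combined with one extra $X_1$ vertex. Edges inside $X_2$, by contrast, may be added freely up to $j=3$, which gives $K^{0,3}_{m,3}$. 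Here I must be careful with the notation of the statement: $K^{1,3}_{m,3}$ has one $X_1$-edge and a triangle on $X_2$. I would verify, by the twin-reduction argument of Lemma~\ref{lem3.3}, that $K^{1,3}_{m,3}$ is $W_6$-minor-free; in fact it is $K^{1,3}_{5,3}$ plus cubic twins on $X_2$. The argument runs as follows. A $W_6$ model uses at most seven branch sets, and any degree-$3$ vertices beyond the first two with the same neighbourhood $X_2$ are redundant, so it suffices to check $K^{1,3}_{5,3}$ directly.

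\textbf{Main obstacle.} The main difficulty is the direct check that $K^{1,3}_{5,3}$ has no $W_6$ minor, together with excluding $K^{2,3}_{5,3}$ and $K^{1,3}_{5,3}$ with the $X_1$-edge placed differently. I plan to handle this by degree counting, as in Lemma~\ref{lemma4.7}. A hub of degree $6$ must come from contracting a connected set. Deleting that hub must leave a $6$-cycle, but the remaining $X_1$ vertices attach only to $X_2$, which has size $3$. The resulting rim would therefore have to alternate through at most three $X_2$-type branch sets, and that forces the cycle length to be at most $5$ unless the hub absorbs an $X_2$ vertex, and I expect that case to fail by counting the remaining branch sets.
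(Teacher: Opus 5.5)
Your overall plan (reuse the $H_1,H_2,H_3$ branches, then track edges inside $X_1$ and $X_2$ as $m$ grows) matches the paper. However, both of your structural restrictions are false, and they exclude exactly the graphs that the family $K^{1,3}_{m,3}$ is there to capture.

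\textbf{The order-$7$ step.} You claim that every $K^{i,j}_{4,3}$ with $j\ge 2$ contains $W_6$, so every admissible order-$7$ graph is a subgraph of $K^{3,1}_{4,3}$ or $K^{4,1}_{4,3}$. Both claims fail.
\begin{itemize}
\item In $K^{0,3}_{4,3}$ the three vertices of $X_2$ meet all edges. $W_6$ needs four (the hub plus three rim vertices), and vertex-cover number does not increase under minors, so $K^{0,3}_{4,3}$ is $W_6$-minor-free.
\item $K^{1,3}_{4,3}$ is also $W_6$-minor-free, and the paper's proof lists it explicitly. It is $K^{1,3}_{5,3}$ with one cubic twin deleted, so your order-$7$ claim contradicts your own later assertion that $K^{1,3}_{5,3}$ is $W_6$-minor-free.
\item $K^{1,3}_{4,3}$ arises in this branch: a $T_3$-sum with $K_{3,3}$ takes $K^{1,0}_{3,3}$ to $K^{1,0}_{4,3}$, and three $T_2$-sums with $K_4$ at cubic $X_1$-vertices then add the three $X_2$-edges.
\item It has three vertices of degree $6$. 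Neither $K^{3,1}_{4,3}$ nor $K^{4,1}_{4,3}$ has three vertices of degree at least $6$, so it is not a subgraph of either.
\item Your witness $K^{2,2}_{4,3}$ is not a subgraph of any $K^{i,j}_{4,3}$ with $i\le 1$, so it cannot exclude these graphs.
\end{itemize}
This is why the paper keeps $K^{1,3}_{4,3}$ as a separate outcome and then uses $K^{1,3}_{4,3}\preceq K^{1,3}_{m,3}$.

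\textbf{The growth step.} You assert that for $m\ge 5$ the part $X_1$ must be independent, because $K^{1,0}_{5,3}$ contracts onto a graph containing $K^{2,2}_{4,3}$. Counting edges rules this out. $K^{1,0}_{5,3}$ has $16$ edges, and reaching seven vertices costs one contraction (losing at least one edge) or one deletion (losing at least three). But $K^{2,2}_{4,3}$ has $16$ edges.
\begin{itemize}
\item The argument of Lemma~\ref{lemma5.3} is for $K^{2,0}_{5,3}$, which has $17$ edges; there, contracting the edge from the new cubic vertex into $X_2$ does give $K^{2,2}_{4,3}$.
\item In fact $K^{1,0}_{5,3}\subseteq K^{1,3}_{5,3}$, which is $W_6$-minor-free.
\item The correct restriction for $m\ge 5$ is at most one edge inside $X_1$ and any number inside $X_2$. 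That is precisely what $K^{1,3}_{m,3}$ encodes.
\end{itemize}
Your final conclusion survives only because $K^{0,3}_{m,3}\preceq K^{1,3}_{m,3}$, and it rests on two false intermediate claims.

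\textbf{The counting sketch for $K^{1,3}_{5,3}$.} This is also off. The cycle $x a_3 y a_4 z a_5 x$ is a $6$-cycle alternating through $X_2$, so the rim is not forced to have length at most $5$. A clean argument that works for every $m$, with $a_1a_2$ the $X_1$-edge:
\begin{itemize}
\item In a $W_6$ model at most three branch sets meet $X_2$. The remaining branch sets, at least four, lie inside $X_1$, so each is a singleton or $\{a_1,a_2\}$.
\item A singleton $\{a\}$ with $a\notin\{a_1,a_2\}$ has its neighbours only in $X_2$ and degree at least $3$ in $W_6$. So it is adjacent to exactly the three branch sets meeting $X_2$.
\item No two vertices of $W_6$ have equal neighbourhoods, so there is at most one such singleton.
\item Together with at most two disjoint branch sets among $\{a_1\}$, $\{a_2\}$, $\{a_1,a_2\}$, at most three branch sets lie inside $X_1$. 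This contradicts the requirement of at least four.
\end{itemize}
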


\begin{proof}
	First, we obtain $W_6$-minor-free graphs by Lemma C.1.
	
	(i) $K^{3,1}_{4,3}$, $K^{4,1}_{4,3}$, $K^{1,3}_{4,3}$;
	
	(ii) $K^{1,3}_{m,3}$.
	
	Next, we examine whether there are containment relationships between different classes of graphs.
	
	$K^{1,3}_{4,3}$ \(\preceq\)  $K^{1,3}_{m,3}$.
\end{proof}

\end{document}